\theoremstyle{plain}
\newtheorem{theorem}{Theorem}[section]
\newtheorem{lemma}[theorem]{Lemma}
\newtheorem{corollary}[theorem]{Corollary}
\newtheorem*{lem}{Lemma}
\theoremstyle{definition}
\newtheorem{definition}[theorem]{Definition}
\theoremstyle{remark}
\newcommand{\R}{\mathbb{R}}
\newcommand{\m}{\mathcal{M}}
\renewcommand{\S}{\mathcal{S}}
\newcommand{\del}{\partial}
\newcommand{\ep}{\epsilon}
\newcommand{\si}{\Sigma}
\newcommand{\rt}{\rightarrow}
\newcommand{\la}{\lambda}
\newcommand{\de}{\delta}
\newcommand{\D}{{\mathbb B}^2}
\newcommand{\incl}{\hookrightarrow}
\newcommand{\be}{\beta}
\begin{document}

\title[Bounded mean curvature surfaces]{Surfaces of bounded mean curvature\\ in Riemannian
manifolds}

\author{Siddhartha Gadgil}

\address{   Department of Mathematics\\
        Indian Institute of Science\\
        Bangalore 560003, India}

\email{gadgil@math.iisc.ernet.in}

\author{Harish Seshadri}

\address{   Department of Mathematics\\
        Indian Institute of Science\\
        Bangalore 560003, India}

\email{harish@math.iisc.ernet.in}

\date{\today}

\subjclass{Primary 57N10; Secondary 53A10}

\thanks{UGC support. The second author was supported by DST Grant No.
SR/S4/MS-283/05}
\begin{abstract}

Consider a sequence of closed, orientable surfaces of fixed genus
$g$ in a Riemannian manifold $M$ with uniform upper bounds on mean
curvature and area. We show that on passing to a subsequence and
choosing appropriate parametrisations, the inclusion maps converge
in $C^0$ to a map from a surface of genus $g$ to $M$.

We also show that, on passing to a further subsequence, the distance functions corresponding to pullback metrics converge to a pseudo-metric of fractal dimension two.

As a corollary, we obtain a purely geometric result. Namely, we show that bounds on the mean curvature, area and genus of a surface $F\subset M$ together with bounds on the geometry of $M$ give an upper bound on the diameter of $F$.

Our proof is modelled on Gromov's compactness theorem for
$J$-holomorphic curves.

\end{abstract}

\maketitle

\setcounter{tocdepth}{1}

\section{Introduction}

The study of families of minimal surfaces and, more generally,
constant mean curvature surfaces in Riemannian manifolds is a
classical topic in differential geometry. Minimal surfaces in
3-manifolds, in particular, has received a lot of attention. In
this paper, we focus our attention on closed (compact without
boundary) surfaces in closed Riemannian $n$-manifolds with \emph{bounded mean curvature}, generalising the case of surfaces of minimal surfaces (which are characterised by having mean curvature zero).

We prove a compactness result in a general setting. Let $(M,g)$ be
a closed, Riemannian $n$-manifold and let $H_0\geq 0$ and $A_0>0$
be fixed constants. Consider a sequence of closed, connected
orientable surfaces $F_j$ in $M$ of a fixed genus $m$ with
\begin{enumerate}
\item the norm of the mean curvature vector field bounded above by
$H_0$ and
\item area bounded above by $A_0$.
\end{enumerate}

Let $i_j:F_j\to M$ denote the inclusion maps. Let $F$ be a
smooth surface of genus $m$. Our main result says that after
choosing appropriate parametrisations a subsequence of the
surfaces converges in a $C^0$-sense to a limiting (not necessarily
embedded) surface.

\begin{theorem}\label{mai}
There are homeomorphisms $\varphi_j:F_j\to F$ such that, after
passing to a subsequence, the maps $i_j\circ \varphi_j^{-1}$
converge in the $C^0$ topology to a map $i_j:F\to M$.
\end{theorem}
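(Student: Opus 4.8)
The strategy, modelled on Gromov's compactness theorem, is to fix the domain by uniformising, to reduce the main estimate to an Arzel\`a--Ascoli argument on most of the surface, and to absorb the finitely many regions where compactness fails into a bubbling analysis; throughout, the monotonicity formula for surfaces of bounded mean curvature plays the role that a uniform bound on the geometry would play in a more classical argument. Give $F_j$ its induced metric $g_j$ and write $g_j=e^{2u_j}h_j$, where $h_j$ is the hyperbolic metric in its conformal class (for $m=0,1$ one uses the round, resp.\ a flat, metric, with the obvious changes). Fix a uniformising diffeomorphism $\psi_j\co (F,h_j)\to F_j$; then $i_j\circ\psi_j\co (F,h_j)\to M$ is conformal, so its Dirichlet energy equals $\mathrm{Area}(F_j)\le A_0$. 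The conformal classes $[h_j]$ lie in the moduli space of genus $m$; after passing to a subsequence they converge in the Deligne--Mumford compactification, and after post-composing the $\psi_j$ with diffeomorphisms of $F$ we may assume the $h_j$ converge smoothly on compact subsets of the complement of a finite union $\Gamma$ of disjoint simple closed curves to a (possibly cusped) hyperbolic metric $h_\infty$, each component of $\Gamma$ being surrounded by an $h_j$-collar of modulus tending to $\infty$. Fix disjoint annular neighbourhoods of the curves of $\Gamma$; their union is the model thin part $F^{\mathrm{thin}}$ and $F^{\mathrm{thick}}=F\sm F^{\mathrm{thin}}$.

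Two consequences of the monotonicity formula will be used. First, there are $r_0,c_0>0$ depending only on $H_0$ and the geometry of $(M,g)$ with $\mathrm{Area}(F_j\cap B_M(p,r))\ge c_0 r^2$ for $p\in F_j$, $r\le r_0$; a packing argument then bounds the extrinsic diameter of $F_j$, and of any connected subsurface whose boundary has bounded diameter, in terms of $H_0,A_0$ and $(M,g)$ only. Second, applied to a non-trivial rescaled limit, the formula shows that any ``bubble'' carries at least a fixed amount $\hbar_0>0$ of area; since the total area is $\le A_0$, it follows, by the usual rescaling argument producing a bubble wherever the area fails to disperse, that on $F^{\mathrm{thick}}$ the conformal factors $e^{2u_j}$ can concentrate at only finitely many points $x_1,\dots,x_k$, and that away from these points $\mathrm{Area}_{F_j}\bigl(\psi_j(B_{h_\infty}(x,r))\bigr)\to 0$ as $r\to 0$, uniformly in $j$. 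We also use the Courant--Lebesgue lemma: since $i_j\circ\psi_j$ is conformal with energy $\le A_0$, around every point of $F^{\mathrm{thick}}$ there are arbitrarily small circles, and in each thin collar there are slices, whose $i_j\circ\psi_j$-image has length tending to $0$ uniformly in $j$.

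On $F^{\mathrm{thick}}\sm\{x_1,\dots,x_k\}$ these combine: near such a point, Courant--Lebesgue gives a small contractible circle with short image, $\psi_j$ sends the disk it bounds to a disk $D\subset F_j$ with short boundary and, by the absence of concentration, small area, and the first monotonicity consequence then forces $\mathrm{diam}_M(D)$ to be small; hence $\{i_j\circ\psi_j\}$ is uniformly equicontinuous on compact subsets of $F^{\mathrm{thick}}\sm\{x_1,\dots,x_k\}$, and after a further subsequence it converges there in $C^0$. It remains to choose $\varphi_j$ near the bubble points and the collars. At $x_\ell$ one rescales $\psi_j$ on a shrinking conformal disk about $x_\ell$ to fixed size, obtaining a non-trivial conformal, bounded-mean-curvature ``bubble'' of bounded diameter, and lets $\varphi_j^{-1}$ carry a fixed small disk $\Delta_\ell\subset F$ onto that shrinking disk; a $C^0$-limit over $\Delta_\ell$ is then extracted, agreeing with the thick-part limit along $\partial\Delta_\ell$, whose image collapses. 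Each thin collar, viewed conformally as $S^1\times[0,L_j]$ with $L_j\to\infty$, is cut along short slices into sub-cylinders, all but boundedly many of which have small area and hence, by monotonicity, small image, the remaining ones having bounded diameter; reparametrising the model annulus so that the exceptional sub-cylinders (and recursively their own long sub-cylinders) receive a definite share of the parameter makes the reparametrised maps equicontinuous on the collar and compatible with the thick-part limit along the interface circles. Gluing the limits over $F^{\mathrm{thick}}$, the bubble disks and the collars produces a continuous $i\co F\to M$ with $i_j\circ\varphi_j^{-1}\to i$ in $C^0$.

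The real difficulty is this last step. Off the bubble points and collars everything is controlled by the fixed-geometry Courant--Lebesgue estimate together with the monotonicity-based diameter bounds; but a bound on the mean curvature gives no bound on the second fundamental form, so the $F_j$ enjoy no uniform bounded geometry and ordinary elliptic compactness is unavailable. Pinning down the degenerating regions --- choosing reparametrisations under which the inclusion maps remain equicontinuous there, with the bubbles and collars of uniformly bounded diameter --- is precisely where the monotonicity formula is essential, and is the point at which the analogy with the bubbling and neck analysis in Gromov's compactness theorem does its work.
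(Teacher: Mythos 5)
Your strategy replaces the paper's central device --- puncturing each $F_j$ along an $\epsilon$-net before uniformising --- with a direct Sacks--Uhlenbeck/Gromov style bubbling analysis on the closed surfaces. You use Courant--Lebesgue (via the energy $=$ area identity for conformal maps) in place of the paper's extremal-length lemma, and the implication ``small boundary $+$ small area $+$ monotonicity $\Rightarrow$ small extrinsic diameter'' in place of the paper's Schwarz lemma and exponential-lifting machinery. Where the two routes diverge is in how the ``small area'' half of that implication is justified, and that is where your argument has a genuine gap.

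You assert that a non-trivial rescaled limit exists at each concentration point, that the monotonicity formula then forces any such ``bubble'' to carry at least a fixed amount $\hbar_0>0$ of area, and hence that concentration happens at only finitely many points, with the area dispersing elsewhere. In the $J$-holomorphic, harmonic-map and minimal-surface settings these facts follow from elliptic regularity together with a removable-singularity theorem, which supply both the existence of the blow-up limit and its non-triviality. But, as the paper stresses in the introduction, surfaces of merely bounded mean curvature satisfy no elliptic PDE and hence have no such regularity theory: ``rescale and extract a non-trivial smooth limit'' is not available. The most one can extract from a sequence of bounded first variation is a varifold limit (Allard), and showing that this limit is non-trivial, that the quantization constant $\hbar_0$ is uniform, that the recursive bubble and neck extraction you invoke (both at the $x_\ell$ and in the reparametrised collars) terminates, and that the resulting tree can be continuously reparametrised through a single genus-$m$ surface, are all substantial steps your write-up leaves unaddressed. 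The paper's $\epsilon$-net puncturing is designed precisely to sidestep all of this: once the net $S_j$ is removed, any topological disc enclosed by a short curve in $\hat F_j$ contains no net point, so every point of it lies within $\epsilon$ of its boundary and the diameter bound of Lemma~\ref{shortcurv} follows with no area-concentration argument at all. That single observation eliminates the bubbling analysis your proposal would require, and is the reason the paper's argument closes while yours, as written, does not.
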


On passing to a further subsequence, we show that the distance functions corresponding to the pullback metrics
converge to a pseudo-metric. We show further that the limit is in an appropriate sense $2$-dimensional.

\begin{theorem}\label{dim}
On passing to a subsequence, the distance functions $d_j$ on $F$
corresponding to the pullback metrics $g_j=(i_j\circ
\varphi_j^{-1})^*g$ converge uniformly to a (continuous)
pseudo-metric $d$ on $F$. Moreover $d$ has fractal dimension $2$.
\end{theorem}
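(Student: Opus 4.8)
The plan is to produce $d$ by an Arzel\`a--Ascoli argument applied to the metrics $d_j$ on the fixed surface $F$, and then to pin down its dimension using, on the one hand, the monotonicity formula for surfaces of bounded mean curvature, and on the other, the thick--thin/bubble-tree structure of the limit already produced in the proof of Theorem~\ref{mai}. Two properties of the family $\{d_j\}$ are needed, both essentially contained in that construction. First, the diameters of $(F,d_j)$ are bounded independently of $j$: the bubble-tree decomposition has uniformly bounded combinatorial type (the genus is fixed, and by the monotonicity formula every bubble carries a definite amount of area, so the bound $A_0$ caps the number of bubbles), each thick piece has uniformly bounded geometry, area and hence diameter, and each neck has uniformly bounded length --- this last being where $|H|\le H_0$ enters, through the isoperimetric estimate for such surfaces. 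Second, $\{d_j\}$ is equicontinuous with respect to a fixed background metric $d_0$ on $F$: on the regular part the pullback metrics converge in $C^\infty_{\mathrm{loc}}$, while a point lying in a neck has a small $d_0$-neighbourhood of small $d_j$-diameter, uniformly in $j$. Arzel\`a--Ascoli then yields a subsequence with $d_j\to d$ uniformly on $F\times F$; non-negativity, symmetry, the triangle inequality and $d(x,x)=0$ all survive the uniform limit, so $d$ is a continuous pseudo-metric (genuinely only a pseudo-metric, since collapsing necks and bubbles force $d(x,y)=0$ for some $x\ne y$).

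For the lower bound $\dim(F,d)\ge 2$, note that a $g_j$-rectifiable path in $F$ from $x$ to $y$ maps under $i_j\circ\varphi_j^{-1}$ to a path in $M$ of the same length, so $d_j(x,y)\ge d_M\big(i_j\varphi_j^{-1}(x),i_j\varphi_j^{-1}(y)\big)$; letting $j\to\infty$ and invoking the $C^0$ convergence of Theorem~\ref{mai} gives $d(x,y)\ge d_M(i(x),i(y))$. Hence $i\colon(F,d)\to(i(F),d_M)$ is $1$-Lipschitz and onto, so $\dim(F,d)\ge\dim i(F)$ (Lipschitz maps do not increase Hausdorff measure, nor covering dimension). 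Finally the varifolds carried by the $F_j$ have uniformly bounded mass and first variation, so a subsequence converges to a varifold with bounded first variation whose support meets $i(F)$ in a set of positive lower $2$-density (monotonicity formula; note $|H|\le H_0$ prevents $i(F)$ from degenerating to a point), which forces $\dim i(F)\ge 2$.

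For the upper bound $\dim(F,d)\le 2$ it suffices to bound the covering numbers $N_\epsilon(F,d)$. Split $F$, via the $\varphi_j$, into finitely many thick pieces $\Sigma_1,\dots,\Sigma_\ell$ and finitely many necks $T_1,\dots,T_m$. On $\Sigma_k$ the metrics converge smoothly to a Riemannian metric $g_\infty$ of area $\le A_0$, with Gauss curvature bounded above (Gauss equation plus the bounds on $H$ and on $M$) and injectivity radius bounded below; G\"unther's volume comparison then gives $\mathrm{Area}_{g_\infty}(B_\rho(x))\ge c\rho^2$ for small $\rho$, hence $N_\epsilon(\Sigma_k,g_\infty)\lesssim\epsilon^{-2}$, and since $d\le d_{g_\infty}$ on $\Sigma_k$ also $N_\epsilon(\Sigma_k,d)\lesssim\epsilon^{-2}$. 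On a neck $T_a$ one has $N_\epsilon(T_a,d_j)\lesssim\mathrm{length}(T_a)/\epsilon\lesssim\epsilon^{-1}$ uniformly in $j$, the neck collapsing onto its core curve, and this passes to the limit. Summing over the finitely many pieces gives $N_\epsilon(F,d)\lesssim\epsilon^{-2}$, so $\dim(F,d)\le 2$; combined with the previous paragraph, $d$ has fractal dimension exactly $2$. \textbf{The main obstacle} is making the neck analysis precise and uniform --- showing that in the limit the necks are at most one-dimensional (bounded length, cross-section collapsing to zero) and controlling the thick--thin transition regions; this is precisely where the bounded mean curvature hypothesis, together with the monotonicity and isoperimetric estimates already developed for Theorem~\ref{mai}, do the real work.
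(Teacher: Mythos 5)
Your plan for the first assertion (convergence to a continuous pseudo-metric) is essentially the paper's: equicontinuity of $d_j$ on the thick part via the uniform Lipschitz bound, small $d_j$-diameter of the necks/cusp neighbourhoods from the extremal-length and monotonicity estimates of Section~\ref{S:fill}, then Arzel\`a--Ascoli plus a diagonal argument; non-negativity, symmetry and the triangle inequality pass to the uniform limit. So that half is sound.

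The dimension argument, however, has a real gap. For the upper bound you assert that on the thick pieces the pullback metrics converge in $C^\infty_{\mathrm{loc}}$ to a Riemannian metric $g_\infty$, and then apply G\"unther's volume comparison to $g_\infty$. This is exactly the kind of regularity the paper explicitly warns is unavailable here: since the surfaces satisfy no elliptic PDE there is no a priori $C^k$ control on the induced metrics, the convergence obtained in Theorem~\ref{mai} is only $C^0$, and the limit $d$ is genuinely only a pseudo-metric, not the distance function of a smooth Riemannian metric. The paper sidesteps this by never estimating the limit $d$ directly; instead it applies the monotonicity lemma (Theorem~\ref{monot}) to the honest Riemannian surfaces $F_j$ to get $\mathrm{vol}(B_{d_j}(p,\delta))\ge c\delta^2$, hence a $B\delta^{-2}$ bound on $\delta$-nets for $d_j$ (Lemma~\ref{upper}), and then transfers this to $d$ via the uniform convergence $\lvert d_j-d\rvert\to 0$ (Theorem~\ref{fracdim}). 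Your approach, as written, cannot get off the ground because $g_\infty$ does not exist as a smooth object.

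Your lower bound is also a genuinely different route from the paper's, and it is under-justified. The paper proves a Gauss--Bonnet estimate (Lemma~\ref{alp}, Corollary~\ref{lower}): the upper curvature bound together with the genus and area bounds force $\mathrm{vol}(B_{d_j}(p,\delta))\le C_0\delta^2$, which combined with the area lower bound $a_0$ gives a $\delta$-net of size $\gtrsim\delta^{-2}$; again this is transferred to $d$ by uniform convergence. Note that the Euler characteristic enters $C_0$ essentially. Your argument instead maps $(F,d)$ $1$-Lipschitz onto $i(F)\subset M$ and appeals to varifold convergence with ``positive lower $2$-density'' to claim $\dim i(F)\ge 2$. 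But a lower density bound alone does not force the support to have positive $\mathcal{H}^2$ measure (mass could in principle concentrate on a thin set if the density blows up); you would also need an \emph{upper} density bound, and this is exactly what the paper's Lemma~\ref{alp}/Corollary~\ref{lower} provide via Gauss--Bonnet. Without spelling out that upper density control, the step ``positive lower $2$-density forces $\dim i(F)\ge 2$'' is not a proof.

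In short: your organisational skeleton (Arzel\`a--Ascoli, thick/thin, two-sided covering-number bounds) matches the paper, but the two key estimates are obtained incorrectly. Replace the claimed smooth limit metric by the paper's device of estimating on the $(F_j,d_j)$ and transferring, and replace the varifold-density appeal by the explicit Gauss--Bonnet bound on ball areas; then the proof closes.
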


As an application of Theorems~\ref{mai} and~\ref{dim}, we prove the following purely geometric result.

\begin{theorem}\label{diam}
Given a Riemannian manifold $(M,g)$ and constants $A_0>0$ and
$H_0\geq 0$ and an integer $m>0$, there is a constant $D=D(M,g,
A_0, H_0,m)$, depending only on $A_0$, $H_0$, $m$ and $(M,g)$, so
that any surface $F\subset M$ of genus $m$ with the norm of the
mean curvature vector bounded above by $H_0$ and area at most
$A_0$ has diameter at most $D$.
\end{theorem}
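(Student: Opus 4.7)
The plan is to argue by contradiction using the compactness supplied by Theorems~\ref{mai} and~\ref{dim}. Suppose no such $D$ exists. Then there is a sequence of closed orientable surfaces $F_j\subset M$ of genus $m$, each with $\mathrm{Area}(F_j)\le A_0$ and mean curvature vector bounded in norm by $H_0$, but whose intrinsic diameters $\mathrm{diam}(F_j)$ tend to $\infty$. (The relevant notion of diameter here is the intrinsic one, induced from the pullback Riemannian metric; the extrinsic diameter of $F_j$ in $M$ is automatically bounded by $\mathrm{diam}(M)<\infty$ and requires no argument.)

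First I would apply Theorem~\ref{mai} to produce homeomorphisms $\varphi_j\co F_j\to F$ such that, after passing to a subsequence, $i_j\circ\varphi_j^{-1}$ converges in $C^0$ to a continuous map $F\to M$. Next, invoking Theorem~\ref{dim}, I would pass to a further subsequence along which the distance functions $d_j$ on $F$ (the pushforwards under $\varphi_j$ of the intrinsic distance on $F_j$) converge uniformly to a continuous pseudo-metric $d$. Since $F\times F$ is compact and $d$ is continuous, the quantity $\mathrm{diam}_d(F):=\sup_{x,y\in F}d(x,y)$ is finite, and uniform convergence gives
\[
\mathrm{diam}_{d_j}(F)\longrightarrow \mathrm{diam}_d(F)<\infty.
\]

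To conclude, one observes that because $\varphi_j$ is a bijection and $d_j$ is the pushforward of the intrinsic Riemannian distance on $F_j$, the $d_j$-diameter of $F$ equals $\mathrm{diam}(F_j)$. Hence $\mathrm{diam}(F_j)$ tends to the finite number $\mathrm{diam}_d(F)$, contradicting the assumption that $\mathrm{diam}(F_j)\to\infty$. The geometric content of the corollary therefore lies entirely in Theorems~\ref{mai} and~\ref{dim}; there is no ``main obstacle'' beyond invoking them, and the only point requiring attention is the identification of the intrinsic diameter of $F_j$ with the $d_j$-diameter of $F$, which is immediate once $\varphi_j$ is known to be a homeomorphism. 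Note that the argument uses only the qualitative fact that a continuous pseudo-metric on a compact space is bounded; no effective estimate on $D$ in terms of $A_0,H_0,m,(M,g)$ is extracted, which is why the $C^0$-compactness of Theorem~\ref{mai} (rather than any sharper regularity) is sufficient.
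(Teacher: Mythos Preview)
Your proof is correct and follows essentially the same route as the paper's own argument: contradiction, apply Theorem~\ref{mai} to get the maps $\varphi_j$, apply Theorem~\ref{dim} to get uniform convergence of the distance functions $d_j$ to a continuous pseudo-metric $d$, and use compactness of $F\times F$ to bound $\sup d$, contradicting $\mathrm{diam}(F_j)\to\infty$. Your explicit remark that $\mathrm{diam}_{d_j}(F)=\mathrm{diam}(F_j)$ because $\varphi_j$ is a bijection and $d_j$ is the pushforward of the intrinsic distance makes the identification cleaner than in the paper, but the argument is the same.
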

\begin{proof}
We proceed by contradiction. Suppose $F_j\subset M$ is a sequence of surfaces satisfying the hypothesis with
$diam(F_j)\to \infty$. By Theorem~\ref{mai}, on passing to a subsequence (which we also denote $F_j$) we can
construct a family of diffeomorphisms $\varphi_j:F_j\to F$ to a fixed surface so that the associated maps $F\to M$
converge. Let $d_j:F\times F\to\R$ be the corresponding distance functions on $F$. Then
$diam(F_j)=sup_{(p,q)\in F\times F} d_j(p,q)\to\infty$.

By the first statement of Theorem~\ref{dim}, on passing to a further subsequence, the functions $d_j:F\times F\to \R$ converge to a continuous function $d:F\times F\to\R$, which is bounded as $F\times F$ is compact. It follows that the functions $d_j$ are uniformly bounded above by the maximum of $d$, a contradiction.
\end{proof}




There is a large body of literature dealing with families of
minimal surfaces in Riemannian 3-manifolds. The foundational work
of W. Allard ~\cite{al} deals with weak convergence of minimal
surfaces in $n$-manifolds. For surfaces in 3-manifolds, M. T.
Anderson proved the following ~\cite{an}: Let $\m _n$ denote the
space of minimal embeddings of a closed surface of genus $\ge n$
in a complete 3-manifold, endowed with the weak topology as a
subset of the space of 2-varifolds. Then the boundary $\partial \m
_n$ is contained in $\m_{\frac {n}{2}}$. As a corollary, it is
shown that if the 3-manifold is compact and has negative sectional
curvature, then $\m_2$ is compact.

For 3-manifolds of positive curvature, H. Choi and R. Schoen prove
~\cite{cs} the following compactness result: Let $N$ be a closed
3-manifold of positive Ricci curvature. Then the space of closed
embedded minimal surfaces of fixed topological type, endowed with
the $C^k$ topology, is compact if $k \ge 2$. In ~\cite{wh} B.
White generalized the result of Choi and Schoen to stationary
points of arbitrary elliptic functionals defined on the space of
embeddings of a compact surface in a 3-manifold, minimal surfaces
being stationary points of the area functional. White's result is
that compactness holds for such surfaces if we assume a bound on
the area.

More recently, W. Minicozzi and T. Colding have studied ~\cite{cm}
sequences of minimal surfaces in 3-manifolds of bounded genus
without bounds on area.

\subsection*{Outline of the proof of Theorem~\ref{mai}}
Our compactness result and proof are modelled on Gromov's
compactness theorem for $J$-holomorphic curves. However the ingredients in our situation -
for instance the Schwarz lemma and the monotonicity lemma, need to be proved using different
techniques than those for $J$-holomorphic curves. Furthermore, unlike the case of
$J$-holomorphic curves (or minimal surfaces), the surfaces we
consider do not satisfy an elliptic partial differential equation,
and hence we do not have regularity results.

Consider henceforth a sequence of surfaces $ F_{i} $ in the
manifold $M$ satisfying the above bounds on the mean curvature,
area and genus. The surfaces $F_i$ have Riemannian metrics
obtained by restriction from $M$. We shall consider distances
with respect to this metric. We shall frequently replace the
given sequence by a subsequence, for which we continue to use the
same notation.

The first elementary observation (see Section~\ref{S:prel}) is
that the upper bound on mean curvature ensures a uniform upper
bound on the sectional curvature of the surfaces.

The basic strategy of the proof is to puncture the surfaces about
an $\epsilon$-net and on the complement, to consider the complete
hyperbolic metric in the conformal class of $\iota^\ast g$. Here
$\epsilon>0$ is a sufficiently small constant depending only on
the geometry of the ambient manifold $M$. A monotonicity lemma we
prove in Section~\ref{S:monot} shows that there is a uniform bound
on the size of the $\epsilon$-net. Hence by passing to a
subsequence we may assume that the topological type of the
punctured surfaces is fixed.

The Bers-Mumford compactness theorem says that, on passing to a
subsequence, the hyperbolic surfaces have a limit which is a complete hyperbolic surface. The limiting surfaces in general has additional cusps created by pinching curves. Our goal is to construct a corresponding limit of
maps. This depends on relating the hyperbolic metric on the surfaces
to the metric obtained from $M$.

Specifically, we show that the away from the cusps the identity
map from the surface with the hyperbolic metric to itself with the
metric restricted from $M$ is uniformly continuous. The first step
(Section~\ref{S:hyp}) is an argument that says that we have a form
of uniform continuity at one scale. This uses an extremal length
argument and the fact that an $\epsilon$-net has been deleted. We
then need an appropriate Schwarz lemma to conclude uniform
continuity at all stages.

The version of Schwarz lemma we prove (in Section~\ref{S:schwarz}) and
use is for discs with small diameter with an upper bound on the
sectional curvature given. However this cannot be applied directly as
it requires a lower bound on the injectivity radius at the origin (of the pullback metric). We
apply the Schwarz lemma indirectly by constructing a lift of an
appropriate disc under the exponential map. Such a lift is obtained
(in Section~\ref{S:lift}) by a geometric argument, making crucial use
of an upper bound on the perimeter of the disc.

Following Gromov's proof of the compactness theorem for
$J$-holomorphic curves, the above ingredients allow us to
construct a map on the punctured surface (see
Section~\ref{S:punc}). Finally, further arguments using the
extremal length, as well as a slightly more intricate one using in
addition the monotonicity lemma, allow us to show that limits can
also be obtained near the punctures in Section~\ref{S:fill}.

\subsection*{A word on notations}
We shall be considering various surfaces called $F_i$, $\Sigma_i$, $\Sigma$, $\bar{\Sigma}$ and $F$.
We clarify here what these mean(they will also be defined in appropriate places in the text).

The surfaces $F_i$ are the given surfaces of bounded mean curvature, taken with their pullback metrics. The inclusion map from the surface $F_j$ into $M$ will be cdenoted $i_j$
We shall frequently pass to subsequences without changing our notation.

We shall construct a surface $F$, which is topologically of the same type as $F_i$ with a pseudo-metric that
is as a limit of the pullback metrics. This will be the domain of the limiting map $i:F\to M$.

The surfaces $\Sigma_i$ are obtained from $F_i$ by deleting a finite set of points, with the metric on $\Sigma_i$
being the unique \emph{hyperbolic} metric that is conformally equivalent to the pullback metric on $F_i$. The natural inclusion maps of $\Sigma_j$ into $F_j$ and $M$ will be denoted $\iota_j$ and $\hat\iota_j$. We shall
construct a hyperbolic surface $\Sigma$ as a limit of the surfaces $\Sigma_i$. Finally, we shall compactify $\Sigma$
and make certain identifications at infinity to obtain a surface $\bar{\Sigma}$.

\tableofcontents
\setcounter{tocdepth}{1}

\section{Preliminaries}\label{S:prel}

We assume throughout that all manifolds (and surfaces) we consider are
orientable.  Let $M$ be a closed, smooth, Riemannian $n$-manifold
which we fix throughout. For a fixed real number $H_0\geq 0$, we
consider embedded surfaces $F\subset M$ with mean curvature bounded above in
absolute value by $H_0$. In case $H_0=0$, these are just minimal
surfaces. We further restrict to surfaces with area bounded above by
$A_0$ and with a fixed genus $g$. The proofs of Lemma \ref{up}
and Lemma \ref{geod} are given in Appendix A.

\subsection{Upper bounds on curvature}
We begin by observing that there is an upper bound on the sectional
curvature of the surfaces. This is used for a version of the Schwarz
lemma.

\begin{lemma}\label{up}
There is a constant $ K_0 $ so that the sectional curvature of each
surface $ F_{i} $ is bounded above by $ K_0 $.
\end{lemma}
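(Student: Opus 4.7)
The plan is to derive the bound directly from the Gauss equation for submanifolds, bounding the extrinsic contribution to the intrinsic curvature in terms of the mean curvature.

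First I would set up the Gauss equation at a point $p\in F_i$. Choose an orthonormal basis $e_1,e_2$ of $T_pF_i$ and an orthonormal frame $\nu_1,\dots,\nu_{n-2}$ of the normal bundle, and write the second fundamental form as $II(X,Y)=\sum_\alpha h^\alpha(X,Y)\nu_\alpha$. The Gauss equation then gives
\[
K_{F_i}(p)=K_M(e_1\wedge e_2)+\sum_\alpha\bigl(h^\alpha_{11}h^\alpha_{22}-(h^\alpha_{12})^2\bigr)=K_M(e_1\wedge e_2)+\sum_\alpha\det h^\alpha,
\]
where the determinants are taken in the orthonormal basis $e_1,e_2$ and so are independent of the choice of basis.

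Next I would control $\sum_\alpha\det h^\alpha$ by the mean curvature. For each $\alpha$, the eigenvalues $\lambda^\alpha_1,\lambda^\alpha_2$ of $h^\alpha$ satisfy the AM--GM inequality $\lambda^\alpha_1\lambda^\alpha_2\le\tfrac14(\lambda^\alpha_1+\lambda^\alpha_2)^2=\tfrac14(\operatorname{tr}h^\alpha)^2$. Summing over $\alpha$ and using $\vec H=\tfrac12\sum_\alpha(\operatorname{tr}h^\alpha)\nu_\alpha$, one obtains
\[
\sum_\alpha\det h^\alpha\le\tfrac14\sum_\alpha(\operatorname{tr}h^\alpha)^2=\|\vec H\|^2\le H_0^2.
\]

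Finally, since $M$ is closed, its sectional curvatures are bounded above by some constant $K_M^{\max}$. Combining with the previous inequality,
\[
K_{F_i}(p)\le K_M^{\max}+H_0^2=:K_0,
\]
uniformly in $i$ and in $p\in F_i$. I do not expect a serious obstacle here; the only minor subtlety worth noting is the higher-codimension case, where the second fundamental form is normal-bundle valued, which is handled cleanly by decomposing along a normal frame as above so that the AM--GM step can be applied to each scalar $h^\alpha$ and the sum then reassembled into $\|\vec H\|^2$.
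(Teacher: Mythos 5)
Your proposal is correct and follows essentially the same route as the paper: the Gauss equation with the second fundamental form decomposed along a normal frame, an AM--GM bound $\det h^\alpha\le\tfrac14(\operatorname{tr}h^\alpha)^2$ on each component, and compactness of $M$ for the ambient curvature bound. The only difference is a normalization convention for the mean curvature (the paper uses the unnormalized trace and bounds each $\det(B_i)$ separately, arriving at $K_0=K_M+(n-2)H_0^2/4$), which affects only the explicit constant, not the argument.
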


\subsection{Lower bound on conjugate radius}
Let $(F,g)$ be a Riemannian manifold. The {\it conjugate radius}
at $p \in F$ is the largest $R$ such that $exp_p$ is an immersion
on $B(0,R) \subset T_p(M)$.

\begin{lemma}\label{geod}
Let $(F,g)$ be a complete Riemannian 2-manifold with sectional
curvature bounded above by $K_0$. Then the conjugate radius at any
$p \in F$ is at least $R = \frac {\pi}{3 \sqrt {K_0}}$. Moreover,
if we write
$$exp^\ast(g)=dr^2+f^2(r,\theta)d\theta^2$$
for polar coordinates $(r, \theta)$ on $T_p F$ and $r < R$, then
$f(r,\theta)$ increasing as a function of \ $r$ and
$f(r,\theta)>r/2$ for all $\theta$.
\end{lemma}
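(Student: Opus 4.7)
The plan is to identify $f(r,\theta)$ as the magnitude of a Jacobi field along the radial geodesic $\gamma_\theta(r)=\exp_p(r\theta)$ and apply the Sturm comparison theorem against the constant-curvature-$K_0$ model. Because $F$ is two-dimensional, the formula for the pullback metric forces $f_{rr}(r,\theta)+K(r,\theta)f(r,\theta)=0$, where $K(r,\theta)$ is the Gauss curvature at $\gamma_\theta(r)$; the initial conditions $f(0,\theta)=0$ and $f_r(0,\theta)=1$ come from $d\exp_p|_0$ being the identity together with Gauss' lemma.

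The natural comparison function is $\phi(r)=\sin(\sqrt{K_0}\,r)/\sqrt{K_0}$, which solves $\phi''+K_0\phi=0$ with the same initial data. The Wronskian computation
\[ (f\phi'-f'\phi)'=f\phi''-f''\phi=(K-K_0)\,f\phi\leq 0, \]
valid while $f,\phi>0$, together with $(f\phi'-f'\phi)(0)=0$ gives $f\phi'\leq f'\phi$. Dividing by $f\phi$ and sending the base point to $0$ yields $f(r,\theta)\geq \phi(r)$; rearranging instead gives
\[ f_r(r,\theta)\;\geq\;(f/\phi)\,\phi'(r)\;\geq\;\phi'(r)=\cos(\sqrt{K_0}\,r) \]
on the subrange where $\phi'>0$, i.e.\ $r<\pi/(2\sqrt{K_0})$.

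Restricting to $r\in[0,R]$ with $R=\pi/(3\sqrt{K_0})$ makes $\sqrt{K_0}\,r\in[0,\pi/3]$, so $f_r(r,\theta)\geq \cos(\pi/3)=1/2>0$ and hence $f$ is strictly increasing in $r$. Positivity of $f$ on $(0,R]$ combined with Gauss' lemma shows $d\exp_p$ has full rank on $B(0,R)\setminus\{0\}$, and since $d\exp_p|_0=\mathrm{id}$ the map is an immersion on the whole ball $B(0,R)$, yielding the claimed conjugate radius bound. The pointwise estimate $f(r,\theta)>r/2$ reduces via $f\geq \phi$ to the inequality $\sin(x)/x>1/2$ on $(0,\pi/3]$, which I would verify by one-variable calculus: $\sin(x)/x$ is decreasing on $[0,\pi/2]$ (since $\tan x\geq x$ there) and its value at $\pi/3$ is $3\sqrt{3}/(2\pi)\approx 0.827$.

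There is no genuine obstacle beyond routine Sturm comparison; the whole proof is a single application of that theorem together with the Jacobi equation. The only small bookkeeping is choosing the constant in $R$ tight enough that both $\cos(\sqrt{K_0}\,r)\geq 1/2$ and $\sin(\sqrt{K_0}\,r)/(\sqrt{K_0}\,r)>1/2$ hold throughout $[0,R]$, and the value $R=\pi/(3\sqrt{K_0})$ is the natural choice that makes both inequalities succeed simultaneously.
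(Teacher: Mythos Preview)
Your proof is correct and follows essentially the same route as the paper: both identify $f$ as the norm of a Jacobi field, derive the Wronskian-type inequality $f'\phi - f\phi' \geq 0$ for $\phi(r)=\sin(\sqrt{K_0}r)/\sqrt{K_0}$ from $f''+K_0 f\geq 0$, and then integrate to obtain $f\geq \phi$ and the monotonicity of $f$. The only cosmetic difference is that the paper phrases the key inequality directly rather than in Sturm-comparison language, and shows $f'\geq 0$ via $f'\geq \sqrt{K_0}\cot(\sqrt{K_0}r)f\geq 0$ rather than your sharper $f'\geq \cos(\sqrt{K_0}r)$.
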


\subsection{Lifting discs under the exponential map}

We need to use the geodesic coordinates of Lemma~\ref{geod} for a topological disc $D\subset F$ in a surface
with an upper bound on the sectional curvature. However, the injectivity radius may be less than the diameter
of $D$. We shall see, however, that we can lift discs with small diameter and small boundary under the exponential
map. Let $R=R(K_0)$ be the constant from Lemma~\ref{geod}.

\begin{lemma}\label{explift}
Let $\iota:B\to (F,g)$ be an immersion of a disc into a complete
Riemannian $2$-manifold $(F,g)$ with sectional curvature bounded
above by $K_0$. Suppose  that for the pullback metric $i^*g$, the
length of  $\gamma=\del B$ and the distance of a point in $B$
to $\gamma$ are both bounded above by $\epsilon<R/10$ where $R =
\frac {\pi}{3 \sqrt {K_0}}$. Then for $x=\iota(y)$ in the image of
$B$, there is a lift $\tilde \iota$ of $\iota$ to the tangent space $T_x
F$ so that $\iota=exp_x\circ\tilde \iota$. Furthermore, the lift can be
chosen so that $\tilde \iota(y)$ is the origin.
\end{lemma}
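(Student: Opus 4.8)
The plan is to build the lift $\tilde\iota$ by analytic continuation along paths in $B$, starting from the prescribed value $\tilde\iota(y)=0\in T_xF$, and then to check that the construction is well-defined (path-independent) and that it stays inside the ball of radius $R$ where $\exp_x$ is an immersion, so that the equation $\iota=\exp_x\circ\tilde\iota$ makes sense. First I would fix the basepoint $x=\iota(y)$ and note, using Lemma~\ref{geod}, that $\exp_x$ is an immersion on the ball $B(0,R)\subset T_xF$ with $R=\pi/(3\sqrt{K_0})$; this makes $\exp_x\co B(0,R)\to F$ a local diffeomorphism onto its image, though not necessarily injective. The idea is that whenever a point $z\in B$ is connected to $y$ by a path $\sigma$ in $B$ whose $\iota$-image has length less than $R$, one can uniquely lift the curve $\iota\circ\sigma$ through $\exp_x$ starting at $0$, staying in $B(0,R)$ because the lifted curve has the same length and begins at the origin; define $\tilde\iota(z)$ to be the endpoint of this lift.

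The key estimate, and what makes the hypotheses exactly what they are, is a bound on the length of a convenient path from $y$ to an arbitrary point $z\in B$. I would choose $\sigma$ to consist of a minimizing path in $B$ (for the pullback metric $\iota^*g$) from $z$ to a nearest point on $\gamma=\del B$, then a sub-arc of $\gamma$, then a minimizing path from $\gamma$ to $y$. By hypothesis each of the two transversal pieces has length at most $\epsilon$, and the arc of $\gamma$ has length at most the full length of $\gamma$, which is at most $\epsilon$ as well; hence $\iota\circ\sigma$ has length at most $3\epsilon<3R/10<R$. Actually, to be slightly more careful about the bound on the $\gamma$-piece I would instead route through $y$'s nearest boundary point and use length$(\gamma)\le\epsilon$ directly, so the total stays below $R$. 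This guarantees every lift we need remains in $B(0,R)$.

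For well-definedness I would argue that the lift does not depend on the chosen path: given two such paths $\sigma_0,\sigma_1$ from $y$ to $z$, they are homotopic rel endpoints in $B$ since $B$ is a disc; along the homotopy the $\iota$-images sweep through curves which, by the same three-piece argument applied to intermediate points, all have length bounded by a fixed constant less than $R$, so the lifts vary continuously and the endpoint cannot jump — formally, one uses that $\exp_x$ restricted to $B(0,R)$ is a covering map onto its image composed with the immersion property, or more simply that the set of homotopies for which the endpoint agrees is open and closed. It then follows that $\tilde\iota\co B\to B(0,R)\subset T_xF$ is well-defined, and it is smooth because locally it is $(\exp_x)^{-1}\circ\iota$ for a local inverse of the immersion $\exp_x$; by construction $\exp_x\circ\tilde\iota=\iota$ and $\tilde\iota(y)=0$. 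The main obstacle is precisely the path-independence step: one must ensure that throughout any homotopy of paths the image curves never attain length $R$, since the immersion $\exp_x$ can fail to be injective and a lift could otherwise escape the ball on which Lemma~\ref{geod} applies; the three-piece length bound of $3\epsilon$, comfortably below $R$, is what rules this out, which is why the smallness assumption $\epsilon<R/10$ is imposed with room to spare.
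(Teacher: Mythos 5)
Your overall plan---lift $\iota$ through $\exp_x$ by analytic continuation along paths, using a short ``three-piece'' path $z\to\gamma\to y$ of length at most $3\epsilon<R$ to guarantee the lift stays in $B(0,R)$---starts the same way the paper does (the paper also routes every point of each partial region $B_j$ to the boundary and establishes a $5\epsilon$ length bound). The problem is the well-definedness step, which is the actual content of the lemma, and your argument for it has a genuine gap. You appeal to the simple connectivity of $B$ and assert that ``along the homotopy the $\iota$-images sweep through curves which, by the same three-piece argument applied to intermediate points, all have length bounded by a fixed constant less than $R$.'' This is not so: a homotopy rel endpoints between two three-piece paths in $B$ has intermediate curves $H(s,\cdot)$ that need not be three-piece paths at all and can have length far exceeding $R$, even though $B$ itself has diameter at most $3\epsilon$. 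Without a length bound on \emph{every} intermediate curve, the homotopy cannot be lifted---the lift of some $H(s,\cdot)$ could exit $B(0,R)$ before reaching $t=1$---and so the usual discreteness-of-fiber argument for constancy of the endpoint never gets off the ground. The fallback you offer, that ``$\exp_x$ restricted to $B(0,R)$ is a covering map onto its image,'' is false in general: it is only a local diffeomorphism, and on a restricted ball the number of preimages of a point is not locally constant, so there is no monodromy principle to invoke.

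The paper sidesteps the path-independence issue entirely, and this is why its proof is so much more elaborate. Rather than lifting along individual paths and then trying to reconcile different choices, it constructs an explicit exhaustion of $B$ by an increasing union $B_j=b_1\cup\dots\cup b_j$ of small smooth discs (built from a Morse-theoretic decomposition of the distance-to-boundary function into ``basins'' and ``cones''), chosen and ordered so that each overlap $B_j\cap b_{j+1}$ is \emph{connected} and every point of $B_j$ reaches $\del B\cap B_j$ by a path in $B_j$ of length at most $2\epsilon$. Connectedness of the overlap is exactly what makes the extension of the lift from $B_j$ to $B_{j+1}$ automatic and unambiguous (two local lifts agreeing at one point of a connected set agree on the whole set), and the $5\epsilon$ path bound keeps everything inside $B(0,9\epsilon)$ where Lemma~\ref{smalldisc} applies. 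If you want to salvage your approach, you would need to construct, for any two points $y,z$ and any two admissible paths, a \emph{specific} homotopy all of whose stages have uniformly bounded length---which in practice amounts to doing the same kind of combinatorial work the paper does; the issue does not resolve itself from simple connectivity and a length bound on the two endpoint paths alone.
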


We remark that $\iota$(at least restricted to the interior of the disc $B$) is often the inclusion
map on a subset of $F$. hence we identify $p$ with $x$.

We prove this in Section~\ref{S:lift}

\subsection{Conformal moduli of annuli}

We recall some basic results regarding the conformal moduli of
annuli that will be used extensively. An {\it annulus} is a
$2$-manifold homeomorphic to the product of a circle and an
interval. We consider annuli with a given conformal class of
Riemannian metrics (i.e., a conformal structure). Recall that this
is equivalent to specifying a complex structure.

A \emph{right circular annulus} $A(H,W)$ is the Riemannian product
of a circle of circumference $W$ and an open interval of (possibly
infinite) length $H$. The following is the uniformisation theorem
for annuli.

\begin{theorem} We have the following.
\begin{enumerate}
\item Any annulus $A$ with a conformal structure is conformally
  equivalent to a right circular annulus.
\item $A(H_1,W_1)$ is conformally equivalent to $A(H_2,W_2)$ if and
  only if $H_1/W_1=H_2/W_2$.
\end{enumerate}
\end{theorem}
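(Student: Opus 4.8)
The plan is to derive both statements from the uniformisation theorem for simply connected Riemann surfaces together with the length--area (extremal length) method.

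For part (1), let $A$ be an annulus with a conformal structure and let $\widetilde{A}\to A$ be its universal cover. Since $A$ is non-compact and not simply connected, $\widetilde{A}$ is conformally either $\C$ or the upper half plane $\H$, and the deck group is infinite cyclic, generated by a conformal automorphism $T$ acting freely and properly discontinuously. If $\widetilde{A}=\C$, then $T$ is affine; freeness together with the quotient being an annulus (and not a torus) forces $T$ to be a translation $z\mapsto z+\tau$, and $z\mapsto\exp(2\pi i z/\tau)$ identifies $A$ with $\C\setminus\{0\}$. If $\widetilde{A}=\H$, then $T$ is hyperbolic or parabolic (an elliptic element would have a fixed point in $\H$, contradicting freeness); conjugating $T$ to $z\mapsto\lambda z$ with $\lambda>1$, respectively $z\mapsto z+1$, the quotient becomes a round annulus $\{1<|z|<\rho\}$ with $\rho<\infty$, respectively the punctured disc $\mathbb{D}^{*}$. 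In every case the logarithm presents the model as the strip $\{0<\mathrm{Re}\,\zeta<H\}$ modulo $\zeta\sim\zeta+iW$, that is, as a right circular annulus $A(H,W)$, with $(H,W)=(\log\rho,2\pi)$ in the finite case and $H=\infty$ in the two degenerate ones. (In the degenerate cases $H/W=\infty$, while $\C\setminus\{0\}$ and $\mathbb{D}^{*}$ are distinguished by the conformal type of their universal covers, so strictly speaking part (2) is the assertion for finite moduli.)

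For part (2), the ``if'' direction is immediate: if $H_1/W_1=H_2/W_2$ then, via the logarithm, both $A(H_i,W_i)$ are conformally equivalent to $\{1<|z|<e^{2\pi H_i/W_i}\}$, and these coincide. For the ``only if'' direction I use extremal length. If $\Gamma$ denotes the family of curves in an annulus joining its two ends, a length--area computation shows that the extremal length of $\Gamma$ in $A(H,W)$ equals $H/W$: the flat metric $\rho\equiv 1$ gives the bound from below, and for an arbitrary conformal metric $\rho$, integrating $\int_{\gamma}\rho\ge L(\rho)$ over the vertical segments $\gamma$ and applying Cauchy--Schwarz against the total area gives $L(\rho)^2/A(\rho)\le H/W$, so the flat metric is extremal. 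A conformal equivalence $f\co A(H_1,W_1)\to A(H_2,W_2)$ is in particular a homeomorphism sending ends to ends, hence carrying the connecting curve family of the first annulus onto that of the second; since extremal length is a conformal invariant, $H_1/W_1=H_2/W_2$.

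The only points needing care are: (a) that the cyclic deck group acts freely and properly discontinuously and that the elliptic and torus cases are excluded, so that the trichotomy $\C\setminus\{0\}$, $\mathbb{D}^{*}$, finite round annulus is exhaustive; and (b) the \emph{extremality} of the flat metric in the length--area estimate, which is precisely what makes $H/W$ a genuine conformal invariant rather than merely a well-defined number, and hence is what the ``only if'' half of part (2) rests on. If one wishes to bypass uniformisation of the universal cover, part (1) can instead be obtained by solving a Dirichlet problem on $A$ --- exhausting $A$ by relatively compact subannuli with analytic boundary, which covers all the annuli that actually arise in this paper --- for the harmonic function $u$ with distinct constant boundary values on the two ends; then $u+iv$, with $v$ the (single-period, multivalued) harmonic conjugate of $u$, is the desired conformal map to a right circular annulus.
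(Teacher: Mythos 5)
The paper states this as a classical background result (``the uniformisation theorem for annuli'') and gives no proof, so there is no in-paper argument to compare against; I can only assess your proof on its own terms, and it is correct and standard. Your route --- uniformise the universal cover $\widetilde{A}$ to $\C$ or $\H$, classify the fixed-point-free deck transformation, and pass to the strip model --- is the usual textbook derivation of part (1), and your extremal-length computation for part (2), showing that the flat metric is extremal for the curve family joining the ends and that the extremal length is $H/W$, is exactly the mechanism that makes the modulus a conformal invariant (it is also, not coincidentally, the same length--area technique the paper uses in Theorem~\ref{extrm}). Two small remarks. First, in the $\widetilde{A}=\C$ case the appeal to ``the quotient being an annulus (and not a torus)'' is superfluous: the deck group of an annulus is infinite cyclic, hence generated by a single affine map $z\mapsto az+b$, and freeness alone forces $a=1$; the torus never enters because you only ever have a rank-one group. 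Second, your closing alternative --- exhausting $A$ by relatively compact subannuli with analytic boundary and solving the Dirichlet problem for the harmonic function with constant boundary values, then taking $u+iv$ with $v$ a periodic harmonic conjugate --- is a more elementary and more constructive path to part (1) that avoids the full uniformisation theorem for simply connected Riemann surfaces, and, as you note, it suffices for the annuli that actually arise in this paper (subannuli of smooth surfaces). That trade-off --- a heavier general theorem versus a Dirichlet-problem argument adequate for the application --- is worth keeping in mind, but both are correct.
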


From the above result, it is immediate that the following definition
gives a well-defined number in $(0,\infty]$.

\begin{definition}
The modulus $Mod(A)$ of an annulus $A$ with a conformal structure is
$Mod(A)=H/W$ where $A$ is conformally equivalent to the right circular
annulus $A(H,W)$.
\end{definition}

By definition the modulus is a conformal invariant. Further, two
annuli are conformally equivalent if and only if they have the same
modulus.

Observe that the area of a right circular annulus $A=A(H,W)$ is $HW$,
so the modulus of $A(H,W)$ can also be expressed as
$Mod(A)=Area(A)/W^2$, i.e., $W^2=Area(A)/Mod(A)$. The following
fundamental (though elementary) result of Ahlfors allows one to get an
upper bound on the appropriate width for an annulus.

\begin{theorem}[Ahlfors]\label{extrm}
Let $A$ be an annulus with a conformal structure. Then there is a
simple closed curve $\gamma\subset A$ separating the two boundary
components of $A$ whose length $l(\gamma)$ satisfies
$$l(\gamma)^2\leq Area(A)/Mod(A)$$ Furthermore, given an
identification of the annulus with $S^1\times J$ for an interval $J$,
we can find a curve $\gamma$ as above of the form $S^1\times \{p\}$.
\end{theorem}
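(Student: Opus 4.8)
The plan is to run the classical length--area (extremal length) argument. By part (1) of the uniformisation theorem above, fix a conformal diffeomorphism $\psi\co A(H,W)\to A$ with $H/W=Mod(A)$, and use coordinates $(x,y)$ on the right circular annulus $A(H,W)$ with $x\in\R/W\Z$ and $y\in(0,H)$, so that the metric on $A$ pulls back to $\psi^\ast g=\rho^2(x,y)\,(dx^2+dy^2)$ for a positive function $\rho$ (smooth, since the metric is; in any case we only need $\rho^2$ integrable). For $p\in(0,H)$ set $\gamma_p=\psi(S^1\times\{p\})$; each $\gamma_p$ is a simple closed curve separating the two boundary components of $A$, and as $p$ varies these exhaust the curves of the product form in the statement.

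First compute $l(\gamma_p)=\int_0^W\rho(x,p)\,dx$ and apply the Cauchy--Schwarz inequality in the $x$ variable to get $l(\gamma_p)^2\leq W\int_0^W\rho(x,p)^2\,dx$. Integrating over $p\in(0,H)$ and using Fubini together with $Area(A)=\int_0^H\!\!\int_0^W\rho^2\,dx\,dy$ yields $\int_0^H l(\gamma_p)^2\,dp\leq W\,Area(A)$. Since $Mod(A)=H/W$, the right-hand side equals $H\cdot Area(A)/Mod(A)$, so the average of $p\mapsto l(\gamma_p)^2$ over the interval $(0,H)$ is at most $Area(A)/Mod(A)$; hence some $p$ satisfies $l(\gamma_p)^2\leq Area(A)/Mod(A)$, and $\gamma=\gamma_p$ is the required curve. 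For the last assertion, a prescribed identification $A\cong S^1\times J$ differs from $\psi$ by a conformal automorphism of the annulus, which (up to rescaling) is a rotation in $x$ composed with a translation or reflection in $y$, and therefore carries the family $\{S^1\times\{p\}\}$ to itself; so the curve just produced can be taken of the prescribed form.

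The argument is soft and I do not anticipate a genuine obstacle; the only points needing a word of care are the degenerate cases. If $Mod(A)=\infty$ (that is, $H=\infty$) and $Area(A)<\infty$, the integral bound forces $\liminf_{p}l(\gamma_p)=0$, so curves of arbitrarily small length exist, consistent with reading $Area(A)/Mod(A)$ as $0$; if $Area(A)=\infty$ the stated inequality is vacuous. Finally, ``average at most $c$'' does genuinely yield a value $p$ with $l(\gamma_p)^2\leq c$, since a nonnegative integrable function on an interval cannot exceed its average everywhere.
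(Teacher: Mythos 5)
The paper does not prove this statement; it cites it as a classical result of Ahlfors and uses it as a black box. Your length--area argument is the standard (and correct) proof: parametrise by a right circular annulus via uniformisation, write the metric as $\rho^2(dx^2+dy^2)$, apply Cauchy--Schwarz along each circle cross-section to get $l(\gamma_p)^2\le W\int_0^W\rho(x,p)^2\,dx$, integrate in $p$ by Fubini, and extract a good value of $p$ from the averaged bound $\frac{1}{H}\int_0^H l(\gamma_p)^2\,dp\le Area(A)/Mod(A)$.

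One caveat on the final clause. As literally written, ``given an identification of the annulus with $S^1\times J$'' permits an arbitrary diffeomorphism, under which no fiber $S^1\times\{p\}$ need satisfy the bound (a sufficiently wiggly identification sends every circle fiber to a long curve). Your proof handles the case where the given identification differs from the uniformising map by a conformal automorphism, so that circle fibers are sent to circle fibers; this is the correct reading and is what the paper actually uses (in Lemma~\ref{shortcurv} the annulus is a difference of concentric hyperbolic balls and the identification is by polar coordinates, whose circle fibers do coincide with those of the conformal uniformisation by rotational symmetry). Your remarks on the degenerate cases are also the right reading: if $Mod(A)=\infty$ with $Area(A)<\infty$ one gets curves of arbitrarily small length rather than a curve of length zero, so the inequality should be interpreted as an infimum statement there.
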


It is easy to see that an annulus obtained from a disc by puncturing a
point has infinite modulus. Further, if we take an open disc $D(r)$ of
a fixed radius $r$ in Euclidean or hyperbolic space and $D(\rho)$ is
the concentric disc of radius $\rho<r$, then the modulus of the
annulus $A=D(r)-\overline{D(\rho)}$ goes to infinity as $\rho\to 0$.

\subsection{Real-analytic metrics and the cut-locus}\label{real}
Let $(F,g)$ be a Riemannian manifold and $p \in F$. Let

$$ U_p:= \{ v \in T_pM \ \vert \ exp_p(tv) \ {\rm is \ a \ minimal \
geodesic \ on} \ [0,1]\}.$$

Note that the boundary $\partial U_p$ is the cut-locus in $T_pM$.

In case $(F,g)$ is real-analytic we have the following proposition
which follows from results of S. B. Myers.

\begin{theorem}(S. B. Myers ~\cite{my1}, ~\cite{my2})\label{cut}
Let $(F,g)$ be a closed real-analytic Riemannian 2-manifold and $p
\in F$. Then

(i) $\partial U_p$ is a piecewise-smooth 1-manifold homeomorphic
to $S^1$.

(ii) Suppose that the sectional curvature of $(F,g)$ is bounded
above by $K_0$. Let $R= \frac {\pi}{3 \sqrt {K_0}}$ be the lower
bound on conjugate radius given by Lemma \ref{geod}. If $\de < R$,
then the geodesic sphere $\partial B(p, \de)$ is a disjoint union
of piecewise-smooth circles.
\end{theorem}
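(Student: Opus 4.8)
Both parts follow from the structure theory of the cut locus of a point in a \emph{real-analytic} surface (Myers \cite{my1,my2}) together with Lemma~\ref{geod}. For part (i) I would first isolate the purely topological content, valid for any closed Riemannian surface: identifying $S^1$ with the unit circle in $T_pF$, the cut-time function $\rho\co S^1\to(0,\infty)$, $\rho(\theta)=\sup\{t>0\co s\mapsto\exp_p(s\theta)\text{ is minimal on }[0,t]\}$, is finite and continuous (by compactness of $F$), so $U_p=\{s\theta\co 0\le s\le\rho(\theta)\}$ and $\partial U_p=\{\rho(\theta)\theta\co\theta\in S^1\}$ is the graph of $\rho$ in polar form; in particular $\theta\mapsto\rho(\theta)\theta$ is already a homeomorphism $S^1\to\partial U_p$. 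The real-analytic hypothesis upgrades this to piecewise smoothness: by Myers' analysis the cut locus $\exp_p(\partial U_p)\subset F$ is a finite topological graph whose edges are real-analytic arcs, along each of which $d(p,\cdot)$ is analytic, interior edge points are joined to $p$ by exactly two minimal geodesics, and the graph's endpoints are first conjugate points. Pulling back, $\rho$ is real-analytic except at the finitely many directions whose geodesic terminates at a vertex of the graph, where it has one-sided analytic expansions; hence $\theta\mapsto\rho(\theta)\theta$ is a piecewise-real-analytic embedding and $\partial U_p$ is a piecewise-smooth circle.

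For part (ii), fix $\delta<R$. By Lemma~\ref{geod}, $\exp_p$ is an immersion on $B(0,R)\subset T_pF$ and no point of $\overline{B(p,\delta)}$ is conjugate to $p$ along a minimal geodesic. One checks that $\partial B(p,\delta)=\exp_p(C_\delta)$, where $C_\delta=\{\delta\theta\co\rho(\theta)\ge\delta\}$ is a closed, piecewise-analytic subset of the round circle $S_\delta=\{v\in T_pF\co|v|=\delta\}$. At a point $\delta\theta$ with $\rho(\theta)>\delta$ the image has a unique minimal geodesic from $p$, so $d(p,\cdot)$ is smooth there with unit gradient and $\partial B(p,\delta)$ is locally the $\exp_p$-image of an arc of $S_\delta$, a smooth embedded arc. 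The remaining points of $\partial B(p,\delta)$ lie on the cut locus, and by part (i) there are only finitely many of them; moreover, since $\delta<R$, they avoid the endpoints of the cut-locus graph (which are conjugate points). Near such a point $q$, using Myers' local model of the cut locus at $q$, one shows that $\partial B(p,\delta)$ consists locally of exactly two real-analytic arcs meeting at $q$ (a corner). Assembling the local pictures shows $\partial B(p,\delta)$ is a compact piecewise-smooth $1$-manifold, hence a finite disjoint union of circles.

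The step I expect to be the main obstacle is the last one in part (ii): the local shape of $\partial B(p,\delta)$ where it meets the cut locus. Away from conjugate points such a point $q$ carries $k\ge 2$ minimal geodesics from $p$, and one must show that exactly two analytic branches of $\partial B(p,\delta)$ emanate from $q$. A first-variation computation shows this comes down to controlling the $k$ incoming directions at $q$: when they lie in an open half-plane one does get a single corner, but when they positively span $T_qF$ --- equivalently when $q$ is an interior local maximum of $d(p,\cdot)$ --- the sphere degenerates to an isolated point at $q$. The conjugate-radius bound $\delta<R$ is the only tool available to exclude (or localise) such configurations, and the delicate part of the proof is to combine it with Myers' structure theorem to rule them out at distances below $R$, or else to formulate the conclusion away from the finitely many distances $d(p,q)$ at which they occur. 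It is precisely here that real-analyticity is indispensable: for a merely smooth metric the cut locus need not even be a graph.
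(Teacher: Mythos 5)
The paper does not prove this statement: it is presented as a consequence of S.~B.~Myers' results and cited to \cite{my1}, \cite{my2}, so there is no argument in the paper to compare against, and I will assess your attempt on its own. Part~(i) is correct. For part~(ii), the obstruction you flag at the end is a genuine gap, and as far as I can see the conjugate-radius hypothesis does not close it; indeed the statement does not appear to be literally true for \emph{every} $\delta<R$. The degenerate configuration is also slightly broader than the one you isolate: besides the strictly positively-spanning case (isolated point of $\partial B(p,\delta)$ at a strict local maximum of $d(p,\cdot)$), there is the borderline case $k=2$, $\nabla d_1(q)=-\nabla d_2(q)$, which always occurs at the midpoint $q$ of the shortest geodesic loop at $p$ whenever $\mathrm{inj}(p)<R$ (Klingenberg). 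There the branches $\{d_1=\delta\}$ and $\{d_2=\delta\}$ are tangent at $q$; since geodesic circles of radius less than $R$ are strictly convex toward their centres (the same Jacobi-field comparison as Lemma~\ref{geod}), the two branches curve to opposite sides, and $\partial B(p,\mathrm{inj}(p))$ has a four-valent self-tangency at $q$ --- a wedge of circles, not a disjoint union of embedded circles. A thin, nearly flat real-analytic torus with a shallow bump far from $p$ (so that $K_0>0$ and $R<\infty$ but $\mathrm{inj}(p)<R$) realises this at $\delta=\mathrm{inj}(p)$.

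So the conclusion of~(ii) can fail at isolated radii $\delta<R$, and the fix is exactly the one you suggest in closing: exclude the finitely many critical values of $d(p,\cdot)$ on the analytic, finite-graph cut locus. That suffices for the paper's purposes: Theorem~\ref{cut}(ii) is invoked only in the proofs of Theorem~\ref{monot} and Lemma~\ref{alp}, where $\delta$ either sits inside an integral over a range of radii or is explicitly allowed to be perturbed, and where the Hoffman--Spruck inequality is applied via exhaustion by smooth subdomains and so tolerates a non-embedded boundary at exceptional radii. Your structure and your identification of the obstruction are sound; your instinct to hedge the conclusion was the right one, and I do not believe the gap can be closed from the bound $\delta<R$ alone.
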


\section{A Monotonicity Lemma and $\epsilon$-nets}\label{S:monot}
As in the case of the monotonicity lemma for minimal surfaces, the
proof of our monotonicity lemma is based on an isoperimetric
inequality. The relevant isoperimetric inequality is due to
Hoffman and Spruck. This will also be used later on in the paper.

\begin{theorem}{\rm (Hoffman-Spruck ~\cite{hs})}\label{hs}
Let $\si$ be a compact surface with boundary in a Riemannian
n-manifold $(M,g)$. There is a constant $v_0=v_0(M,g)$ such that
either \ $vol (\si) \ge v_0$ or
\begin{equation}\label{gg}
vol ( \si)^{\frac {1}{2}} \ \le \ \be \ \Bigl ( vol (\partial \si
) + \int_{\si} \vert H \vert dV_\si \Bigr ),
\end{equation}
where $\be$ is an absolute constant.
\end{theorem}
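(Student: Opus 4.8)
The plan is to recognise this as the two-dimensional case of the Michael--Simon--Hoffman--Spruck Sobolev inequality for submanifolds, and to assemble it from two ingredients: the Euclidean Sobolev inequality of Michael and Simon, and a transfer argument to $(M,g)$ in which both the constant $v_0$ and the dichotomy emerge through an absorption trick.

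First I would invoke the Euclidean estimate: for any compact immersed surface $\si\subset\R^N$ with boundary, mean curvature vector $H$, and any Lipschitz $f\geq 0$ on $\si$, one has $\bigl(\int_\si f^2\,dV\bigr)^{1/2}\leq \be_0\bigl(\int_\si(\abs{\nabla^\si f}+\abs{H}f)\,dV+\int_{\del\si}f\,ds\bigr)$ for an absolute constant $\be_0$. The proof is the classical one: apply the tangential divergence (first variation) formula on $\si$ to the radial vector fields $y\mapsto\phi(\abs{y-x})(y-x)$ for suitable cutoffs $\phi$, extract from it a monotonicity formula for the area ratio $\rho\mapsto\rho^{-2}\,\mathrm{Area}(\si\cap B_\rho(x))$ up to error terms controlled by $\int\abs{H}f$ and the boundary integral, and turn the resulting density estimates into the Sobolev inequality by a Vitali covering argument. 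I would cite Michael--Simon (or Simon's lecture notes) rather than reproduce this.

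Next I would transfer to $M$. Fix a Nash isometric embedding $M\incl\R^N$; since $M$ is compact its second fundamental form $A_M$ in $\R^N$ is bounded, $\abs{A_M}\leq c_0=c_0(M,g)$. For $\si\subset M\subset\R^N$ the mean curvature vector $H_{\R^N}$ of $\si$ computed in $\R^N$ is the sum of the mean curvature vector $H$ of $\si$ in $M$ and the trace over $T\si$ of $A_M$, so $\abs{H_{\R^N}}\leq\abs{H}+2c_0$ pointwise. Applying the Euclidean estimate to $\si\subset\R^N$ with $f\equiv 1$ (approximating the constant function by cutoffs supported off $\del\si$, so that the boundary term becomes $vol(\del\si)$) gives $vol(\si)^{1/2}\leq\be_0\bigl(vol(\del\si)+\int_\si\abs{H}\,dV_\si+2c_0\,vol(\si)\bigr)$. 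I then set $v_0:=(4\be_0 c_0)^{-2}$: if $vol(\si)\geq v_0$ the first alternative of the theorem holds, and if $vol(\si)<v_0$ then $2\be_0 c_0\,vol(\si)=2\be_0 c_0\,vol(\si)^{1/2}\,vol(\si)^{1/2}\leq\tfrac12\,vol(\si)^{1/2}$, so absorbing this term on the left yields \eqref{gg} with $\be=2\be_0$, an absolute constant.

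The main obstacle is the first step — the Michael--Simon monotonicity formula together with its covering argument — which carries all the real analytic content; the transfer is soft once a bound on the extrinsic geometry of $M$ is in hand. The only points in the transfer that need care are the reduction to $f\equiv 1$ on a surface \emph{with} boundary and the fact that $\si$ is merely immersed, both of which are routine because the divergence identity underlying the first step is intrinsic to $\si$. An alternative that avoids Nash's theorem is to rerun the Michael--Simon argument directly in $M$ in geodesic normal coordinates, using Rauch-type comparison to control the error in the monotonicity formula on metric balls of radius at most the injectivity radius of $M$; there the constant $v_0$ plays exactly the same absorbing role. I would take whichever route is quicker to write down with full rigour.
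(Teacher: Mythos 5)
Your argument is correct, but it takes a genuinely different route from the paper. The paper's ``proof'' is just a citation: it invokes Theorem 2.2 of Hoffman--Spruck directly, fixes the free parameters ($\alpha=1/2$, $b=1$ in their notation), and reads off the dichotomy with $v_0=\frac{\omega_2}{2}\min\{1,\,i_0^2/\pi^2\}$ expressed through the injectivity radius $i_0$ of $M$ --- the Riemannian version of the isoperimetric inequality is taken as a black box, and no absorption is performed by hand. You instead reconstruct the statement from scratch: you start from the flat Michael--Simon inequality in $\R^N$, push $\si$ into $\R^N$ via a Nash embedding of $M$, split the ambient mean curvature as $H_{\R^N}=H+\mathrm{tr}_{T\si}A_M$ with $|A_M|\leq c_0(M,g)$, test with $f\equiv 1$ (approximated by cutoffs so the boundary term comes out as $vol(\partial\si)$), and then absorb the extra $2\be_0 c_0\,vol(\si)$ term under the smallness hypothesis $vol(\si)<v_0:=(4\be_0 c_0)^{-2}$ --- which is precisely what produces the dichotomy. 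Your route makes the role of $v_0$ transparent as a threshold below which the ``ambient curvature noise'' can be swallowed, at the cost of leaning on Nash and on the Michael--Simon monotonicity/covering machinery; your constant $v_0$ is controlled by the extrinsic second fundamental form of the embedding rather than by $i_0$, so it is less canonical, but since all that the paper later uses is the existence of some $v_0=v_0(M,g)$, that difference is harmless. Your closing remark --- that one can rerun the argument intrinsically in geodesic normal coordinates using Rauch comparison, with $v_0$ playing the same absorbing role --- is exactly the route Hoffman--Spruck themselves take, which is why that alternative would reproduce the paper's injectivity-radius-dependent constant.
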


\begin{proof}
This is a corollary of Theorem 2.2 of ~\cite{hs}. In the notation
of that paper take $\alpha = \frac {1}{2}$ and $b=1$, for
instance. Let $i_0$ denote the injectivity radius of $M$. Then
either (\ref{gg}) holds or
$$vol (\si) \ \ge v_0 = \frac {\omega_2}{2} \ min \bigl \{ 1 , \frac
{i_0^2}{ \pi^2} \bigr \} .$$
\end{proof}

The main result of this section is a monotonicity lemma, giving a
lower bound on the area of small balls.

\begin{theorem}\label{monot}

Let $(M,g)$ a Riemannian n-manifold and $\Sigma$ a compact surface
in $M$ with mean curvature $\vert H \vert \le H_0$. There exist
$c=c(M,g,H_0)$ and $\delta= \delta (M,g,H_0)$ such that the volume
of any ball of radius $\ep \le \de $ in $\si$ with the induced
metric satisfies
$$vol ( B(p, \ep)) \ \ge \  c \epsilon^2. $$
\end{theorem}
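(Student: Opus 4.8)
The plan is to derive the monotonicity lemma from the Hoffman--Spruck isoperimetric inequality (Theorem~\ref{hs}) via the standard differential-inequality argument, as in the minimal surface case, but being careful about the mean curvature term and the volume threshold $v_0$. Fix $p\in\si$ and for $0<s$ let $a(s)=\mathrm{vol}(B(p,s))$, the area of the intrinsic ball of radius $s$ in $\si$. By the coarea formula $a$ is absolutely continuous and $a'(s)=\mathrm{vol}(\del B(p,s))=\ell(s)$ for a.e.\ $s$, where $\ell(s)$ is the length of the geodesic circle of radius $s$ about $p$ in $\si$. Apply Theorem~\ref{hs} to the subsurface $\si_s=B(p,s)$: either $a(s)\ge v_0$, in which case we are done (for $\ep\le\de$ with $\de$ chosen below, $c\ep^2\le c\de^2\le v_0$ if $c$ is small), or
\begin{equation*}
a(s)^{1/2}\le \be\Bigl(\ell(s)+\int_{B(p,s)}|H|\,dV\Bigr)\le \be\bigl(\ell(s)+H_0\,a(s)\bigr).
\end{equation*}

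The next step is to absorb the mean curvature term. Choose $\de_1=\de_1(M,g,H_0)>0$ small enough that whenever $s\le\de_1$ and $a(s)<v_0$ we have $\be H_0\,a(s)\le \be H_0\,v_0<\tfrac12 a(s)^{1/2}$ — this is possible because $a(s)\le \mathrm{vol}(\si)$ is controlled and, more to the point, for small radii $a(s)$ is small; alternatively one notes $\be H_0 a(s) \le \be H_0 a(s)^{1/2} \cdot a(s)^{1/2}$ and uses $a(s)^{1/2}\le a(\de_1)^{1/2}$, shrinking $\de_1$ so that $\be H_0 a(\de_1)^{1/2}\le \tfrac12$. Either way we obtain, for $s\le\de_1$ in the non-trivial case,
\begin{equation*}
a(s)^{1/2}\le 2\be\,\ell(s)=2\be\,a'(s)\qquad\text{for a.e.\ }s.
\end{equation*}
This is the key differential inequality $\frac{d}{ds}\bigl(a(s)^{1/2}\bigr)=\frac{a'(s)}{2a(s)^{1/2}}\ge \frac{1}{4\be}$.

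Integrating from $0$ to $\ep$ and using $a(0)=0$ gives $a(\ep)^{1/2}\ge \ep/(4\be)$, i.e.\ $\mathrm{vol}(B(p,\ep))\ge \ep^2/(16\be^2)$, which is the desired bound with $c=1/(16\be^2)$ and $\de=\de_1$, shrinking $\de$ further if needed so that $c\de^2\le v_0$ (covering the case $a(\ep)\ge v_0$). One technical point to verify is that $a(s)>0$ for all $s>0$, so that dividing by $a(s)^{1/2}$ is legitimate; this holds because $\si$ is a smooth surface, so small intrinsic balls have positive area, and $a$ is monotone nondecreasing.

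The main obstacle I anticipate is purely bookkeeping: ensuring the threshold radius $\de$ and constant $c$ depend only on $M$, $g$, $H_0$ and not on the particular surface $\si$ in the sequence. The Hoffman--Spruck constants $\be$ and $v_0$ already depend only on $(M,g)$ (indeed $v_0$ depends on the injectivity radius $i_0$ of $M$), so the only place surface-dependence could sneak in is the absorption step where I used smallness of $a(s)$; but since I can bound $a(s)\le a(\de_1)$ and then further bound this in the alternative where $a(\de_1)<v_0$, the estimate $\be H_0 a(\de_1)^{1/2}<\be H_0 v_0^{1/2}$ is uniform. A secondary subtlety is the measurability/regularity needed to run the coarea argument — $\del B(p,s)$ need not be smooth for all $s$ — but the coarea formula for the distance function requires only that $\mathrm{dist}(p,\cdot)$ is Lipschitz (it is, with constant $1$), so $a'(s)\ge \mathcal{H}^1(\del B(p,s))$ holds for a.e.\ $s$, which is all we need.
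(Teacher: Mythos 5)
Your overall strategy — apply Hoffman--Spruck to $B(p,s)$, use the coarea formula to get a differential inequality for $a(s)=\mathrm{vol}(B(p,s))$, absorb the mean curvature term, and integrate — is exactly the paper's approach. But your absorption step has a genuine gap.

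You want, for $s\le\de_1$ in the ``non-trivial'' case $a(s)<v_0$, that $\be H_0\,a(s)\le \tfrac12 a(s)^{1/2}$. This is equivalent to $a(s)\le \tfrac{1}{4\be^2 H_0^2}$, which is a \emph{lower-bound}-type condition on $a(s)$ of precisely the sort you are trying to prove — so neither of the two justifications you offer closes it. The first, ``$\be H_0 v_0<\tfrac12 a(s)^{1/2}$'', is a lower bound on $a(s)$ that is simply unavailable at this stage. The second, ``shrink $\de_1$ so that $\be H_0 a(\de_1)^{1/2}\le\tfrac12$'', does not produce a threshold depending only on $(M,g,H_0)$: with only an upper curvature bound (no lower bound, and no area hypothesis in this theorem's statement) there is no uniform upper bound on $a(\de_1)$ as $\de_1\to0$, and in your own fallback you only reach $\be H_0 a(\de_1)^{1/2}<\be H_0 v_0^{1/2}$, which need not be $\le\tfrac12$ since $v_0=v_0(M,g)$ and $H_0$ are unrelated. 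The fix, which is what the paper does, is to introduce a \emph{second} dichotomy alongside the $v_0$ one: either $a(s)\ge\bigl(\tfrac{1}{2\be H_0}\bigr)^2$ for some $s\le\ep$, in which case monotonicity of $a$ gives $a(\ep)\ge\bigl(\tfrac{1}{2\be H_0}\bigr)^2\ge\ep^2$ once $\ep\le\tfrac{1}{2\be H_0}$; or $a(s)<\bigl(\tfrac{1}{2\be H_0}\bigr)^2$ throughout $[0,\ep]$, in which case $\be H_0\,a(s)<\tfrac12 a(s)^{1/2}$ holds pointwise and the differential inequality $a'(s)\ge\tfrac{1}{2\be}a(s)^{1/2}$ follows. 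Correspondingly $\de$ must incorporate the bound $\tfrac{1}{2\be H_0}$ (not just $\sqrt{v_0}$ and $R$). Once you add this case split, the rest of your argument — the integration giving $a(\ep)\ge\ep^2/(16\be^2)$ and the remark that the coarea formula for the Lipschitz function $\mathrm{dist}(p,\cdot)$ suffices — matches the paper.
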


\begin{proof} By making an arbitrarily small $C^2$-perturbation we can assume
that $g$ is real-analytic. By Lemma ~\ref{cut} this implies that
the boundary of a ball of radius $\ep$ less than $R$ is
piecewise-smooth. We apply Theorem \ref{hs} to the manifold with
piecewise-smooth boundary $\partial B(p,\ep)$. Even (\ref{gg}) is
stated for submanifolds with smooth boundaries, it is clearly true
even if the boundaries are piecewise-smooth, as can be seen by
exhausting such manifolds by submanifolds with smooth boundaries.
We will apply it to the metric balls $B(p,r)$, $0<r \le \ep$.

If $vol(B(p,r)) \ge v_0$ for some $r < \ep$, then  $vol(B(p,\ep) > vol(B(p,r)) \ge v_0$. 
Hence $vol(B(p,\ep)) \ge \ep ^2$ if $\ep < \sqrt {v_0}$. 

So we can suppose that the
isoperimetric inequality (\ref{gg}) holds for every $r \le \ep$.

It follows from the co-area formula that $vol(B(p,r))  =  \
\int_0^r vol (\partial B(p,t)) dt$. Hence $vol(B(p,r))$ is
differentiable a.e. as a function of $r$.

By Theorem \ref{hs}, we then have

\begin{align}\notag
\frac {d }{dr} vol(B(p,r)) \ &=  \ vol (\partial B(p,r)) \notag \\
                            &\ge \ \be^{-1} vol ( B(p,r) )^{\frac
                            {1}{2}}-\int_{B(p,r)} \vert H \vert
                            dV_\si  \notag \\
                             &\ge \ \be^{-1} vol ( B(p,r) )^{\frac
                            {1}{2}}- H_0 \ vol ( B(p,r)) \ \ \ a.e. \notag \\
\end{align}

We next see that we can assume $\be^{-1} vol ( B(p,r)
)^{\frac{1}{2}}>2 H_0 \ vol ( B(p,r))$ for $0\leq r\leq \epsilon$.
If not, then we get $vol(B(p,\ep))\geq vol(B(p,r)\geq (\frac{1}{2
\be H_0})^2$, which is larger than $\epsilon^2$ for $\epsilon < 2 \be
H_0$.

As $\be ^{-1} vol ( B(p,r) )^{\frac{1}{2}}>2 H_0 \ vol ( B(p,r))$,
we get
$$\frac {d }{dr} vol(B(p,r))\geq \frac{1}{2 \be } vol ( B(p,r) )^{\frac{1}{2}}$$
By integrating, we get $vol(B(p,\epsilon)> \be ' \epsilon^2$ with $\be
'=\frac{1}{16 \be ^2}$.

Hence we can take
$$ \de = min \bigl \{ R, \ \sqrt {v_0}, \ \ 2 \be H_0 \bigr \},
\ \ \ \ c = min \{1, \ \frac{1}{16 \be ^2} \}.$$

\end{proof}

Gromov's proof of the compactness of $J$-holomorphic curves is
based on puncturing along an $\epsilon$-net. We shall choose an
appropriate constant $\epsilon=\epsilon(M,H_0, A_0, g_0)$, which
is the same for all the surfaces $S_i$. Assume that such a
constant has been chosen. For each surface $F_i$, we choose a
maximal subset $S_i\subset F_i$ so that the distance between every
pair of points in $S_i$ is at least $\epsilon$, i.e., $S_i$ is an
$\epsilon$-net.

\begin{lemma}
There is a constant $N$ such that for all $i$, $\vert S_i\vert\leq
N$.
\end{lemma}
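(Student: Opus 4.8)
The plan is to bound $|S_i|$ by combining the monotonicity lemma (Theorem~\ref{monot}) with the area bound $A_0$. The key observation is that the points of an $\epsilon$-net are $\epsilon$-separated, so the balls of radius $\epsilon/2$ about them are pairwise disjoint, while each such ball has area bounded below by a universal constant. Since all these disjoint balls sit inside $F_i$, whose total area is at most $A_0$, there can only be boundedly many of them.

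In more detail: fix $\epsilon$ small enough that $\epsilon/2 \le \delta$, where $\delta = \delta(M,g,H_0)$ is the constant from Theorem~\ref{monot}; this is a constraint on the choice of $\epsilon = \epsilon(M,H_0,A_0,g)$, which we are free to impose. Let $S_i = \{p_1,\dots,p_k\}$ with $k = |S_i|$. For any two distinct points $p_a, p_b \in S_i$ we have $d_{F_i}(p_a,p_b) \ge \epsilon$, so by the triangle inequality the metric balls $B(p_a,\epsilon/2)$ and $B(p_b,\epsilon/2)$ are disjoint. By Theorem~\ref{monot}, each satisfies $\mathrm{vol}(B(p_a,\epsilon/2)) \ge c(\epsilon/2)^2$ with $c = c(M,g,H_0)$. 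Summing over $a = 1,\dots,k$ and using that these disjoint balls lie in $F_i$,
\[
k \cdot c (\epsilon/2)^2 \;\le\; \sum_{a=1}^{k} \mathrm{vol}(B(p_a,\epsilon/2)) \;\le\; \mathrm{vol}(F_i) \;\le\; A_0,
\]
hence $k \le 4A_0/(c\epsilon^2) =: N$, which depends only on $M$, $g$, $H_0$, $A_0$ (through $\epsilon$, $c$, and $A_0$) and not on $i$.

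There is no serious obstacle here, as the monotonicity lemma has already done the work; the only point requiring a little care is bookkeeping the dependence of constants, namely that $\epsilon$ must be chosen compatibly with $\delta$ from Theorem~\ref{monot} so that balls of radius $\epsilon/2$ are in the regime where the lower area bound applies. One should also note that the induced metric on $F_i$ is complete (the surfaces are closed), so the balls and the monotonicity estimate are well-defined. With these remarks the bound $N = \lceil 4A_0/(c\epsilon^2)\rceil$ is uniform in $i$, as claimed.
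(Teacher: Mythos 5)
Your proof is correct and follows essentially the same route as the paper: disjointness of balls centred on the $\epsilon$-net, the lower area bound from the monotonicity lemma (Theorem~\ref{monot}), and the global area bound $A_0$. In fact your version is slightly more careful than the paper's: the paper asserts that the open balls $B(x,\epsilon)$ are disjoint, which does not follow from the points being $\epsilon$-separated (two points at distance exactly $\epsilon$ have overlapping $\epsilon$-balls); your use of radius $\epsilon/2$ is the right fix, and your explicit remark that $\epsilon$ must be chosen $\le 2\delta$ to stay in the regime of the monotonicity lemma is a point the paper leaves implicit.
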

\begin{proof}
Fix a surface $F_i$ in the sequence. By hypothesis, the open balls
$B(x,\epsilon)\subset F_i$, $x\in S_i$ are disjoint. By the
monotonicity lemma, there is a constant $a$ such that each of the
balls have area at least $a$. As the area of $F_i$ is bounded
above by $A_0$, the cardinality of $S_i$ is bounded above by
$A_0/a$.
\end{proof}

\section{Hyperbolic structures}\label{S:hyp}

Consider the sequence of surfaces $\hat F_j=F_j-S_j$. As the
cardinality of $S_j$ and the genus of $F_j$ are bounded above, by
passing to a subsequence we can, and do, assume that the surfaces
$\hat F_j$ are of a fixed topological type. Further, by ensuring that
the number of punctures is at least three, we can ensure that
$\chi(\hat F_i)<0$. By the uniformisation theorem, there is a unique
complete hyperbolic metric on $\hat F_j$ that is conformal to the
given Riemannian metric. We view this as a hyperbolic surface
$\Sigma_j$ which is identified with a subset of $F_j$. 

By the Bers-Mumford compactness theorem, on passing to a subsequence
the surfaces $\Sigma_j$ converge to a complete, finite volume, hyperbolic
surface $\Sigma$. More concretely, we have a sequence of numbers $\delta_j\to 0$, compact sets $\kappa_j\subset \Sigma_j$ and $(1+\delta_j)$-bi-Lipschitz diffeomorphisms $\psi_j:\kappa_j\to \Theta_j\subset\Sigma$ so that the sets $\Theta_j$ form an exhaustion of $\Sigma$. Furthermore, by passing to smaller sets, we can ensure that the sets $\Theta_j$ are complements of horocyclic neighbourhoods of the cusps of $\Sigma_j$ (with the intersection of the neighbourhoods of each cusp being empty).

We shall show that the maps $\iota_j\circ \psi_j^{-1}$ are
equicontinuous on compact sets (in a sense made precise below), so
that the Arzela-Ascoli theorem allows us to construct a limiting map
$\iota:\Sigma\to M$. We shall henceforth implicitly identify subsets of $\Sigma$ (contained in $\Theta_j$) with subsets of $\kappa_j$ using the maps $\psi_j$. Under these identifications, the maps $\iota_j$ can be regarded as maps on subsets of $\Sigma$.

Consider now a compact set $K\subset \Sigma$. The injectivity
radius on $K$ is bounded below by a constant $\alpha>0$. For $j$
large, as above we can identify $K$ with subsets $K_j\subset \Sigma_j$ and
the injectivity radius on these sets is also bounded below by
$\alpha$ as the map $\psi_j$ is $(1+\delta_j)$-bi-Lipschitz with $\delta_j$ small for $j$ large.

Our first step in proving equicontinuity is an upper bound on the
diameter in the pullback metric of small hyperbolic balls of a fixed
size.

\begin{lemma}\label{shortcurv}
There is a constant $r=r(\epsilon, \alpha, A_0, H_0, m, M)$ such
that for any point $x\in K_j$, $\iota_j(B(x,r))$ is contained in a
smooth (not in general metric) ball $B(\gamma)$ in $\widehat{F_j}\subset
F_j$ of diameter $3\epsilon$ whose boundary has length at most
$\epsilon$.
\end{lemma}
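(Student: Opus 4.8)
The plan is to use the extremal length / conformal modulus machinery set up in Section~\ref{S:prel}, together with the fact that an $\epsilon$-net has been removed, to control the $\iota_j$-image of a small hyperbolic ball. Fix a hyperbolic ball $B(x,r)\subset K_j\subset\Sigma_j$ of radius $r$, with $r$ smaller than the lower bound $\alpha$ on the injectivity radius (so $B(x,r)$ is an embedded disc) and also small in a sense to be determined. Consider the annulus $A = B(x,\alpha) \setminus \overline{B(x,r)}$ inside $\Sigma_j$. Since the injectivity radius on $K_j$ is at least $\alpha$, this is an embedded hyperbolic annulus; moreover its modulus $\mathrm{Mod}(A)$ is bounded below by a constant $M_0 = M_0(\alpha, r)$ depending only on $\alpha$ and $r$ (it is essentially the modulus of the standard hyperbolic annulus between concentric geodesic balls of radii $r$ and $\alpha$), and $M_0 \to \infty$ as $r \to 0$.

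Next I would transport this to the pullback metric. The conformal structure underlying $\Sigma_j$ is, by construction, the same as that of the pullback metric $\iota_j^*g$ on $\widehat{F_j}\subset F_j$, so $A$, viewed as an annulus with the conformal structure it inherits, has the same modulus whether measured via the hyperbolic metric or via $\iota_j^*g$. Its area in the pullback metric is at most $A_0$ (the total area bound). By Ahlfors' theorem (Theorem~\ref{extrm}), there is a core curve $\gamma \subset A$, of the form of a hyperbolic geodesic circle $\partial B(x,t)$ for some $r < t < \alpha$, whose length in $\iota_j^*g$ satisfies $l(\gamma)^2 \le \mathrm{Area}(A)/\mathrm{Mod}(A) \le A_0/M_0$. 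Choosing $r$ small enough that $A_0/M_0 < \epsilon^2$, we get $l(\gamma) < \epsilon$ — this is the short boundary curve we want. The curve $\gamma$ bounds a disc $B$ in $\widehat{F_j}$ containing $\iota_j(B(x,r))$; a further shrinking of $r$ (using the same modulus estimate on sub-annuli, or simply noting the diameter of the portion inside $\gamma$ is controlled by its perimeter together with an area bound via the isoperimetric/monotonicity input) ensures that $B$ has diameter at most $3\epsilon$.

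The one genuinely delicate point is controlling the diameter of $B$, not just the length of $\partial B$: a disc with short boundary can still have large diameter if it has a long thin spike. This is exactly where the removal of the $\epsilon$-net $S_j$ enters. Since $S_j$ is a maximal $\epsilon$-net in $F_j$, every point of $F_j$ — hence every point of $B$ — is within distance $\epsilon$ of $S_j$; but $B\subset\widehat{F_j} = F_j\setminus S_j$ is disjoint from $S_j$, so $B$ cannot contain a ball of radius $\epsilon$ about any of its points, i.e. every point of $B$ is within distance $\epsilon$ of $\partial B$ together with, a priori, the deleted points. More carefully, the bound we need is that any point of $B$ lies within $\epsilon$ of $\gamma = \partial B$: since $B$ is a component of $\widehat{F_j}$ cut along $\gamma$ and contains no point of $S_j$ in its interior in a way that would let it "escape," a point at distance $\ge \epsilon$ from $\gamma$ inside $B$ would have its $\epsilon$-ball contained in $F_j$ and disjoint from $S_j$, contradicting maximality of the net. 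Hence $\mathrm{diam}(B) \le l(\gamma) + 2\epsilon < 3\epsilon$ by going along the boundary and then inward. Assembling these estimates, with $r$ chosen once and for all depending on $\epsilon, \alpha, A_0, H_0, m, M$, gives the lemma; the constants $H_0$ and $M$ enter only through the monotonicity lemma and Lemma~\ref{up} used to make the $\epsilon$-net argument and the area bounds effective.
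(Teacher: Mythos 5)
Your argument is essentially the paper's: form the annulus $A=B(x,\alpha)\setminus \overline{B(x,r)}$ in $\Sigma_j$, choose $r$ so small that $\mathrm{Mod}(A)\ge A_0/\epsilon^2$, apply Ahlfors' theorem to the area bound $A_0$ to get a separating curve $\gamma$ of pullback length at most $\epsilon$, and then use the deletion of the $\epsilon$-net $S_j$ (any point of $B(\gamma)$ is within $\epsilon$ of some $y\in S_j$, and since $y\notin B(\gamma)$ the connecting path must cross $\gamma$) to bound the diameter by $3\epsilon$. The parenthetical alternative you float (bounding diameter via isoperimetry alone) would not work, as you yourself note with the long-thin-spike remark, but the net argument you settle on is exactly the one the paper uses.
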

\begin{proof}
We shall choose $r<\alpha$ appropriately. For a point $x\in K_j$,
consider the annulus $A=B(x,\alpha)\setminus int(B(x,r))$. Choose
$r$ small enough that this annulus has modulus at least
$A_0/\epsilon^2$. Note that this depends only on $\alpha$, $A_0$ and
$\epsilon$.

Consider the annulus $\iota_j(A)\subset F_j$. This has area bounded
above by $A_0$. By Theorem~\ref{extrm}, there is a curve $\gamma$ in
$\iota_j(A)$, separating the boundary components of $A$, so that the
length of $\gamma$ is bounded above by $\epsilon$. The curve $\gamma$
is the boundary of a ball $B(\gamma)$ that contains
$\iota_j(B(x,r))$. We shall show that $B(\gamma)$ has diameter at most
$3\epsilon$.

As $B(\gamma)$ is a ball whose boundary is a connected set of diameter
at most $\epsilon$, it suffices to show that for each point $x\in
B(\gamma)$, the distance of $x$ from the boundary is at most
$\epsilon$. To see this, observe that $B(\gamma)\subset F_j-S_j$ by
construction. As $S_j$ is an $\epsilon$-net, the distance from $x\in
B(\gamma)$ to some point $y\in S_j$ is at most $\epsilon$. As the
metric on $F_j$ is obtained from a Riemannian metric and $F_j$ is
compact, there is a path $\beta$ from $x$ to $y$ of length at most
$\epsilon$. This path must intersect the boundary of $B(\gamma)$ at
some point $z$. It follows that $d(x,z)<\epsilon$.
\end{proof}

\section{A Schwarz lemma}\label{S:schwarz}

To deduce equicontinuity of the maps $\iota_j:\si_j \rt \hat F_j$
from the estimate on diameters in the induced metrics of balls of
radius $\rho$, we use an appropriate Schwarz lemma. \vspace{5mm}

\begin{theorem}\label{schwarz}
Let $(S,g)$ be a Riemannian 2-manifold and $p \in \si$ such that
the sectional curvature of $S$ $\le K_0$ and injectivity radius
$inj(p) \ge i_0$. Let $(\D,h)$ denote the unit disc with the
Poincar\'e metric of curvature $ -1$.

For any \ $r >0$, there exists $\eta=\eta (K_0, i_0)$ such that
for any conformal map \
$$f : B_h(0, r) \subset \D \rt B_g (p, \eta )\subset S,$$
with $f(0)=p$, we have
$$  \Vert df_0(v) \Vert_g \ \le \ r^{-2} \Vert v \Vert_h,$$
for all $ v \in T_0 \D$.
\end{theorem}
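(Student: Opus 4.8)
The plan is to derive the estimate from the classical Schwarz--Pick lemma (the hyperbolic–hyperbolic comparison) after replacing the target metric $g$ by a conformal model metric that we can control. First I would fix $r>0$ and choose $\eta=\eta(K_0,i_0)$ small enough (in particular $\eta<\min\{i_0,R(K_0)\}$, with $R(K_0)$ as in Lemma~\ref{geod}) that the geodesic ball $B_g(p,\eta)$ lies inside the image of $\exp_p$ on $B(0,i_0)\subset T_pS$ and the geodesic polar coordinates of Lemma~\ref{geod} are valid there. In those coordinates write $g=dr^2+f^2(r,\theta)\,d\theta^2$; by Lemma~\ref{geod} we have $r/2<f(r,\theta)$ and $f$ is increasing in $r$, so $g$ is comparable to the flat metric $g_0=dr^2+r^2\,d\theta^2$ on $B_g(p,\eta)$, with $g\ge \tfrac14 g_0$ pointwise (the constant being harmless; I will keep track of exact factors only at the end).

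Next I would pass to the conformal picture. Since $S$ is a Riemann surface, near $p$ there is a holomorphic coordinate $z$ in which $g=\lambda(z)^2|dz|^2$, and the composite $f\colon B_h(0,r)\to B_g(p,\eta)$ becomes a holomorphic map of plane domains (up to the conformal identification of $B_h(0,r)\subset\D$ with a round disc). The key point is to bound the conformal density: using the comparison with $g_0$ above, on a disc of $g$-radius $\eta$ about $p$ one gets an upper bound $\lambda(z)\le C(K_0)\,\eta/\!\dots$ — more precisely the $g$-diameter bound $\eta$ together with $g\ge\tfrac14 g_0$ forces the Euclidean $z$-disc carrying the image to have radius at most a controlled multiple of $\eta$, so that the target, equipped with its own Poincar\'e metric $h_{\mathrm{target}}$, satisfies $h_{\mathrm{target}}\gtrsim \eta^{-1} g$ (a hyperbolic metric on a small round disc is large). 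Then the Schwarz--Pick lemma applied to $f$, which is a holomorphic map between the hyperbolic domain $(B_h(0,r),\text{its Poincar\'e metric})$ and the target disc with $h_{\mathrm{target}}$, gives $f^*h_{\mathrm{target}}\le (\text{Poincar\'e metric of }B_h(0,r))$, and the Poincar\'e metric of the $h$-ball $B_h(0,r)$ is bounded above on the concentric ball by $c(r)\,h$ with $c(r)=O(r^{-2})$ as $r\to 0$. Combining these three comparisons yields $\|df_0(v)\|_g\le \eta\cdot c(r)\cdot(\text{const})\,\|v\|_h$, and absorbing $\eta$ and the constants — shrinking $\eta$ further if necessary so the final constant is $\le 1$ — gives the stated bound $\|df_0(v)\|_g\le r^{-2}\|v\|_h$.

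The step I expect to be the main obstacle is making the middle comparison $h_{\mathrm{target}}\gtrsim \eta^{-1}g$ genuinely correct and uniform: the target $B_g(p,\eta)$ is not literally a round disc in the $z$-coordinate, only conformally one, and one must control both how round it is (via the Lemma~\ref{geod} pinching $r/2<f<{}$increasing) and the fact that the relevant Poincar\'e metric to use on the target is that of the actual image domain, not of $B_g(p,\eta)$; since $f$ maps into $B_g(p,\eta)$, monotonicity of the Poincar\'e metric under inclusion lets one use the larger domain $B_g(p,\eta)$, which is the convenient move. A secondary subtlety is tracking the dependence of $\eta$ on $r$: the statement asks for $\eta=\eta(K_0,i_0)$ independent of $r$, so one must check that the factor $r^{-2}$ on the right absorbs the $r$-dependence coming from the Poincar\'e metric of $B_h(0,r)$ for all $r$ at once, which it does because that Poincar\'e density is $\le 4r^{-2}$ near the center (comparing $B_h(0,r)$ with the Euclidean disc of the same $h$-radius) — uniformly in $r$, with the remaining constants folded into the choice of $\eta$.
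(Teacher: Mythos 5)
Your high-level strategy matches the paper's: pass to a conformal metric of curvature $\le -1$ on the target and a constant-curvature-$-1$ metric on the domain, then invoke a Schwarz--Pick/Ahlfors--Schwarz comparison. But the step you flag yourself as ``the main obstacle'' is in fact an unresolved gap, and it does not reduce to the ``convenient move'' you suggest. Your target-side lower bound $h_{\mathrm{target}}\gtrsim g$ at $p$ is argued via the comparison $g\ge\tfrac14 g_0$ in \emph{geodesic polar} coordinates (Lemma~\ref{geod}), but the quantities you then want to control --- the conformal density $\lambda(z)$ and the Euclidean radius of the image --- live in an \emph{isothermal} coordinate $z$, and the two coordinate systems are different unless $g$ is already flat. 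The inequality $g\ge\tfrac14 g_0$ gives no direct bound on $\lambda$: $g_0$ is not conformal to $g$ in the same chart, so a $g$-ball of radius $\eta$ need not sit inside a $z$-disc of radius $O(\eta)$. Obtaining such control amounts to bounding the conformal factor from below by solving (or comparing against) the curvature equation $\Delta\log\lambda=-K\lambda^2$, i.e., precisely the kind of Laplacian comparison that the omitted estimate requires. In other words, making your middle inequality rigorous would re-derive the core of the paper's argument rather than bypass it.

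The paper avoids isothermal coordinates entirely: it defines an explicit conformal factor $e^{2u}$ with $u=\lambda^2\rho^2$, a function of the distance $\rho$ from $p$, so that everything is naturally expressed in geodesic polar coordinates where Lemma~\ref{geod} applies directly. The Laplacian comparison theorem (with the round sphere of curvature $K_0$ as model) gives $\Delta\rho\ge 0$, hence $\Delta u\ge 2\lambda^2$ and $\tilde K=(K-\Delta u)e^{-2u}\le -1$ on a ball whose radius depends only on $K_0$ and $\mathrm{inj}(p)$; the factor $e^{2u}$ equals $1$ at $p$, and the domain is handled by the explicit complete metric $\phi^2 h$ with $\phi(0)=r^{-2}$, yielding the stated bound with $\eta$ independent of $r$. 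If you want to salvage your Poincar\'e-metric route, the cleanest fix is to produce exactly such a competitor metric of curvature $\le -1$ conformal to $g$ and $\ge g$ at $p$, use it to lower-bound $h_{\mathrm{target}}$ at $p$ via Ahlfors--Schwarz on the target, and then apply Schwarz--Pick; but at that point you have simply reproduced the paper's proof with an extra layer of Poincar\'e-metric bookkeeping.
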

\begin{proof} The idea is to conformally deform $g$ on $B_g (p,
\eta )$ to $\tilde g=exp(2u)g$ so that $\tilde g$ has curvature
$\le -1$. The deformation will be done so that we have control
over the conformal factor. Then we can apply the Ahlfors-Schwarz
lemma in the new metric to  get the required estimate.

First choose $\eta < inj(p)$. Define $u:B_g (p, \de ) \rt \R^+$ by
$$u(x)= \la^2 \rho^2(x) \ \ {\rm and} \ \ \lambda = \sqrt
{\frac {K_0}{2}+1},$$ where $\rho(x)$ is the distance of $x$ from
$p$. Since $\de < inj(p)$, $u$ is smooth. Now the curvature
$\tilde K$ of $\tilde g = exp(2u)g$ is related to the curvature
$K$ of $g$ by
$$\tilde K = (K - \triangle u)exp(-2u),$$
where $\triangle$ is the negative Laplacian.

Since $K \le K_0$, \ if $ \rho(x) < min \{inj(p),\frac {\pi}{4
\sqrt {K_0} } \} $ , we can apply the usual comparison theorem
for the Laplacian of a distance function. Here the comparison
space $S_0$ is the sphere of curvature $K_0$. Let $p_0, x_0$
denote points in $S_0$ corresponding to $p,x$ and let $\rho_0$
denote the distance from $p_0$. We have $\rho_0(x_0)=\rho(x)$. The
comparison theorem gives
$$ (\triangle \ \rho) (x) \ge (\triangle_{0} \ \rho_0) (x_0) =
cot(\sqrt {K_0} \rho(x))= \frac {cos(\sqrt {K_0}
\rho(x))}{sin(\sqrt {K_0} \rho (x) )}.$$ In particular,
$$ (\triangle \ \rho) (x) \ge 0,$$
if $ \rho(x) < min \{inj(p),\frac {\pi}{4 \sqrt {K_0}} \} $. For
such $x$, we have

\begin{align}\notag
(\triangle u)(x) &= \la^2 (\triangle \rho^2)(x) \notag \\
            &= 2 \la^2 \rho(x) (\triangle \rho)(x) + 2 \la^2 \Vert d \rho \Vert^2 (x) \notag \\
            & \ge 2 \la^2. \notag \\
\end{align} \notag
 We then have
$$\tilde K (x) \le (K_0 - 2 \la^2)exp(-2 \rho(x)^2) = -2 \ exp(-2 \rho(x)^2)\le
-1,$$ for $\rho(x) < \sqrt {\frac {log (2)}{2}}$. Hence we can
take
$$\eta \ = \frac {1}{2} \ min \ \Bigl \{   i_0, \ \frac {\pi}{4 \sqrt
{K_0}}, \ \frac {log (2)}{2} \Bigr \}.$$
 Now we modify the
metric on $B_h(0, r)$. Note that if $\phi(z) = r^2 \frac {(1 -
\vert z \vert^2)^2}{ (r^2 - \vert z \vert^2 )^2}$, then $(B_h(0,
r), \phi^2h)$ is a complete Riemannian manifold of constant
curvature -1.

Let us recall the \vspace{2mm}

 {\it Ahlfors-Schwarz Lemma}: Let
$(F_1,g_1)$ be a complete Riemannian 2-manifold with curvature
$\ge -c$ and $(F_2,g_2)$ a Riemannian 2-manifold with curvature
$\le -d$, where $c,d>0$. If $f:(F_1,g_1) \rt (F_2,g_2)$ is a
conformal map, then
$$  \Vert df_x(v) \Vert_{g_2} \ \le \  \sqrt {\frac {c}{d}} \Vert v \Vert_{g_1},$$
for all $x \in F_1$, $v \in T_xF_1$. \vspace{2mm}

We apply the Ahlfors-Schwarz Lemma to $f : (B_h(0, r), \ \phi^2h)
\rt (B_g (p, \eta ), \ exp(2u)g)$ at $x=0$. Noting that \ $c=d=1$,
$\phi(0)= r^{-2}$ \ and \ $exp(2u(p))=1$, \ we get
$$  \Vert df_0(v) \Vert_g \ \le \ r^{-2} \Vert v \Vert_h.$$

\end{proof}

\section{Limits of punctured surfaces}\label{S:punc}

The Schwarz Lemma of Theorem~\ref{schwarz} requires a lower bound
on the injectivity of the pullback metric, which we cannot
control. However the following consequence of Lemma~\ref{explift}
allows us to obtain uniform Lipschitz bounds.

Note that the surface $\Sigma$ can be identified with the
complement of a collection of annuli in $\Sigma_j$ for each $j$.
Furthermore, if $K\subset \Sigma$ is a compact set, then for $j$
large the corresponding set $K_j\subset \Sigma_j$ is close to an
isometry.

Let $K\subset \Sigma$ be a compact set. Then there is a constant
$\alpha>0$ so that for $j$ large, the injectivity radius of the
set $K_j\subset \Sigma_j$ corresponding to $K$ is bounded below by
$\alpha$. Let $\iota_j:K_j\to F_j$ be the inclusion map.

\begin{lemma}\label{Lipsc}
There is a constant $\kappa$, independent of $j$, so that the map
$\iota_j$ is $\kappa$-Lipschitz for $j$ sufficiently large.
\end{lemma}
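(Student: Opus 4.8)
The plan is to combine Lemma~\ref{shortcurv}, Lemma~\ref{explift}, and the Schwarz lemma (Theorem~\ref{schwarz}) in that order. Fix the compact set $K\subset\Sigma$ and the constant $\alpha>0$ bounding the injectivity radius of $K_j\subset\Sigma_j$ from below for $j$ large. Let $r=r(\epsilon,\alpha,A_0,H_0,m,M)$ be the radius produced by Lemma~\ref{shortcurv}; shrinking $r$ if necessary we may assume $r<\alpha$ and also $r$ small enough that the hyperbolic ball $B(x,r)$ is, after the nearly-isometric identification $\psi_j$, contained in a hyperbolic ball $B_h(0,r')$ of a fixed radius $r'$ in the Poincar\'e disc model, uniformly in $j$ and in $x\in K_j$. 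By Lemma~\ref{shortcurv}, for each $x\in K_j$ the image $\iota_j(B(x,r))$ lies in a smooth ball $B(\gamma)\subset\widehat{F_j}$ of diameter at most $3\epsilon$ whose boundary $\gamma$ has length at most $\epsilon$; in particular, after choosing $\epsilon<R/10$ at the outset (where $R=R(K_0)=\pi/(3\sqrt{K_0})$ comes from Lemma~\ref{geod}, with $K_0$ the uniform sectional-curvature bound of Lemma~\ref{up}), the ball $B(\gamma)$ satisfies the hypotheses of Lemma~\ref{explift}: its boundary length and the distance of any interior point to the boundary are both at most $3\epsilon$, which we may absorb into the hypothesis by taking $\epsilon$ a further constant factor smaller.

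Next I would apply Lemma~\ref{explift} with $x=\iota_j(y)$, $y$ the centre of $B(x,r)$: this yields a lift $\tilde\iota_j\colon B(\gamma)\to T_xF_j$ with $\tilde\iota_j(y)=0$ and $\iota_j=\exp_x\circ\tilde\iota_j$ on $B(\gamma)$. On the tangent space $T_xF_j$ we use the pulled-back metric $\exp_x^*g$, which by Lemma~\ref{geod} is defined and well-controlled on a ball of radius $R$ about the origin — crucially, this metric has a definite injectivity radius at $0$ (namely something comparable to $R$, since $\exp_0=\mathrm{id}$ there and the metric stays close to Euclidean by the lower bound $f(r,\theta)>r/2$), and its sectional curvature is still $\le K_0$ because $\exp_x$ is a local isometry onto its image. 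Thus the composition $\tilde\iota_j|_{B(x,r)}\colon B(x,r)\to (T_xF_j,\exp_x^*g)$ is a conformal map from a hyperbolic ball of radius $r'$ into a surface with curvature $\le K_0$ and injectivity radius $\ge i_0$ at the image of the centre, where both $K_0$ and $i_0$ are uniform in $j$ and $x$. Shrinking $r'$ once more (this only shrinks $r$, which is still a uniform constant) so that the image lands inside the ball $B_g(p,\eta)$ of Theorem~\ref{schwarz} — this is legitimate because the image has diameter at most $3\epsilon$, which we can make smaller than $\eta(K_0,i_0)$ by our freedom in choosing $\epsilon$ — Theorem~\ref{schwarz} gives $\|d(\tilde\iota_j)_y(v)\|\le (r')^{-2}\|v\|_h$ for all $v\in T_y\Sigma_j$. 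Since $\exp_x$ is a local isometry from $(T_xF_j,\exp_x^*g)$ to $(F_j,g)$, the same bound holds for $d(\iota_j)_y$. As $y\in K_j$ was arbitrary and $r'$ is a uniform constant, $\iota_j$ is $\kappa$-Lipschitz on $K_j$ with $\kappa=(r')^{-2}$, after accounting for the $(1+\delta_j)$-bi-Lipschitz factor from $\psi_j$ which tends to $1$.

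The main obstacle is the bookkeeping of constants and, more substantively, verifying that the lifted metric $\exp_x^*g$ on $T_xF_j$ genuinely satisfies the hypotheses of the Schwarz lemma — in particular that it has a \emph{uniform} lower bound on injectivity radius at the origin. One must argue that within the conjugate radius $R$, the absence of conjugate points (Lemma~\ref{geod}) together with the fact that $0$ is the centre forces a definite injectivity radius: a short geodesic loop at $0$ would have to leave and re-enter a small ball, but the bound $f(r,\theta)>r/2$ prevents this below a fixed scale. A second point needing care is that the lift $\tilde\iota_j$ from Lemma~\ref{explift} is only guaranteed on the ball $B(\gamma)$, and we must check the hyperbolic ball $B(x,r)$ in $\Sigma_j$ actually maps \emph{inside} $B(\gamma)$ under $\iota_j$ — but this is exactly the conclusion of Lemma~\ref{shortcurv}, namely $\iota_j(B(x,r))\subset B(\gamma)$, so composing is legitimate. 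Once these two points are nailed down, the argument is a direct chain of the cited results.
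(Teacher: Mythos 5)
Your proof is correct and follows essentially the same route as the paper's: enclose $\iota_j(B(p,r))$ in a small-perimeter disc via Lemma~\ref{shortcurv}, lift under the exponential map via Lemma~\ref{explift}, identify the hyperbolic ball with a Poincar\'e disc ball, and apply the Schwarz lemma of Theorem~\ref{schwarz} to $\tilde\iota_j\circ\phi$ to bound $\|d\iota_j\|$ uniformly. Your extra remarks (that the pulled-back metric on $T_xF_j$ has injectivity radius at least $R$ at the origin, and that $\iota_j(B(x,r))\subset B(\gamma)$ so the composition is legitimate) are precisely the points the paper relies on implicitly, so no real divergence in method.
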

\begin{proof}
By Lemma~\ref{shortcurv}, there is a constant $r>0$ such that for
any $p \in K_j$,  $\iota_j(B(p,r))$ is contained in a topological
ball $B(\gamma)$ whose diameter is at most $3\epsilon$ and whose
boundary has length at most $\epsilon$ in the pullback metric.
Here $B(p,r)$ is the ball in the hyperbolic metric on $\Sigma_j$,

By Lemma~\ref{explift}, there is a lift $\tilde \iota_j$ of
$\iota_j$ to $T_p F_j$ under the exponential map $exp_p$. Choose
an isometry $\phi: B_h(0,r) \rt B(p,r)$, with $\phi (0) = p$,
where $B_h(0,r)$ denotes a ball in the Poincar\'e disc as in
Theorem \ref{schwarz}.

We can now apply the Schwarz Lemma of Theorem~\ref{schwarz} to

$$f=\tilde \iota_j \circ \phi: B_h(0,r) \rt B(0,R) \subset T_p F, $$
where $B(0,R)$ is endowed with the metric $exp_p^\ast g$.

Since $\iota_j = exp_p \circ \tilde \iota $ and $exp_p : (B(0,R),
exp_p^\ast g) \rt (F_j,g)$ is an isometry, we have

$$\Vert d \iota_j \Vert_{\phi(x)} = \Vert d \tilde
\iota_j \Vert_{\phi(x)}
 = \Vert df  \Vert_x.$$

Hence we obtain an upper bound on $\Vert d \iota_j \Vert$ at $p$
for the map $\iota_j:K_j\to \hat F_j\incl M$. Note that this upper
bound does not depend on $j$, but depends only on $M$ and
$\alpha$. This gives the uniform Lipchitz bound.

\end{proof}

\begin{lemma}\label{limpunc}
On passing to a subsequence, the maps $\hat\iota_j=i_j\circ\iota_j:\Sigma_j\to \hat
F_j\to M$ converge uniformly on compact sets to a map
$\iota:\Sigma\to M$.
\end{lemma}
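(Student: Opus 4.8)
The plan is to combine the uniform Lipschitz bound from Lemma~\ref{Lipsc} with the Arzel\`a--Ascoli theorem, applied along an exhaustion of $\Sigma$ by compact sets, followed by a diagonal argument. First I would fix an exhaustion $K_1\subset K_2\subset\cdots$ of $\Sigma$ by compact sets with $\bigcup_\ell K_\ell=\Sigma$; each $K_\ell$ has injectivity radius bounded below by some $\alpha_\ell>0$. For $j$ large enough, $K_\ell$ is identified (via $\psi_j$) with a subset $K_{\ell,j}\subset\Sigma_j$ whose injectivity radius is bounded below by $\alpha_\ell/2$, say, since $\psi_j$ is $(1+\delta_j)$-bi-Lipschitz with $\delta_j\to 0$. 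By Lemma~\ref{Lipsc} the maps $\hat\iota_j=i_j\circ\iota_j$, restricted to $K_{\ell,j}$ and transported to $K_\ell$, are $\kappa_\ell$-Lipschitz (with respect to the fixed hyperbolic metric on $\Sigma$) for all $j$ sufficiently large, with $\kappa_\ell$ independent of $j$. In particular this family is equicontinuous; since $M$ is compact it is also uniformly bounded, so Arzel\`a--Ascoli yields a subsequence converging uniformly on $K_\ell$ to a continuous (indeed $\kappa_\ell$-Lipschitz) map $K_\ell\to M$.

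Next I would run the standard diagonal extraction: take a subsequence converging uniformly on $K_1$, then a further subsequence converging uniformly on $K_2$, and so on, and finally pass to the diagonal subsequence. This diagonal subsequence converges uniformly on every $K_\ell$, and the limits agree on overlaps, so they patch together to a well-defined map $\iota\co\Sigma\to M$. The map $\iota$ is continuous because it is continuous on each $K_\ell$ and these form an open (after taking interiors) cover of $\Sigma$; in fact $\iota$ is locally Lipschitz, being $\kappa_\ell$-Lipschitz on $K_\ell$. This gives precisely the assertion that $\hat\iota_j\to\iota$ uniformly on compact sets.

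One point that needs care is the bookkeeping of the identifications: the maps $\hat\iota_j$ are genuinely defined on $\Sigma_j$, and only after composing with $\psi_j^{-1}$ do they become maps on subsets $\Theta_j$ of $\Sigma$ that exhaust $\Sigma$. So "$\hat\iota_j\to\iota$ uniformly on a compact $K\subset\Sigma$" must be understood as: for $j$ large, $K\subset\Theta_j$, and $\hat\iota_j\circ\psi_j^{-1}|_K\to\iota|_K$ uniformly. Since the paper has already announced the convention of implicitly identifying subsets of $\Sigma$ with subsets of $\kappa_j$ via $\psi_j$, I would simply invoke that convention and note that the $(1+\delta_j)$-bi-Lipschitz distortion does not affect equicontinuity or the limit, as $\delta_j\to 0$.

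The main obstacle — or rather, the only real content — is making sure the Lipschitz constant in Lemma~\ref{Lipsc} is genuinely uniform in $j$ on each fixed compact set; this is already handled by Lemma~\ref{Lipsc} together with Lemma~\ref{shortcurv}, whose constants depend only on $\epsilon$, $\alpha$, $A_0$, $H_0$, $m$, $M$ and not on $j$. Granting that, the rest is the routine Arzel\`a--Ascoli-plus-diagonal machinery. I do not expect any serious difficulty beyond the notational care described above; the heart of the argument (the $j$-independent Lipschitz bound) has already been established in the preceding lemmas.
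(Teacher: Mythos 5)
Your proposal matches the paper's argument: both invoke the uniform Lipschitz bound of Lemma~\ref{Lipsc} on each compact set, apply Arzel\`a--Ascoli, and run a diagonal extraction along an exhaustion of $\Sigma$ by compact sets. Your additional remarks on the $\psi_j$-identifications and the $j$-independence of the Lipschitz constant are careful bookkeeping, but the route is the same.
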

\begin{proof}
Let $K\subset\Sigma$ be a compact set. We apply Lemma~\ref{Lipsc} to the restrictions of the maps $\iota_j$, regarded as maps on $K$, to obtain a uniform Lipshitz bound.
Thus, by the Arzela-Ascoli theorem, there is a subsequence of surfaces
so that the maps $\iota_j$ converge to a map $\iota:K\to M$. Now
consider an exhaustion of $\Sigma$ by compact sets $K^{(i)}$. By the
above, we can find a subsequence of surfaces to obtain a limit on
$K^{(1)}$. On passing to a further subsequence, we obtain a limit on
$K^{(2)}$. Iterating this process and using a diagonal subsequence as
usual, we obtain a limiting map on $\Sigma$.
\end{proof}

\section{Filling punctures}\label{S:fill}

We have constructed a limiting map from the punctured surface $\Sigma$
of finite type to the manifold $M$. We show now that, on passing to a
further sequence, we can construct a limiting map on a closed surface
$\bar{\Sigma}$ obtained from $\Sigma$. This is the surface by filling in
the punctures and making certain identifications of the filled in points.

Observe that there are two kinds of punctures (cusps). The first kind are those that correspond to the limits of punctures in $\Sigma_j=\hat{F_j}$ corresponding to points
of the $\epsilon$-net $S_j$. We denote the set of such punctures as $S(\Sigma)$. We can, and shall, identify these with points on the surfaces $F_j$.

The second kind are pairs of punctures formed in passing to the compactification of Moduli space by a sequence of curves $\alpha_j\subset \Sigma_j\subset F_j$ whose length in the hyperbolic metric  on $\Sigma_j$ goes to zero. We denote the set of such pairs of cusps by $\Lambda(\Sigma)$. Each point $p\in\Lambda(\Sigma)$ corresponds to a pair of ends $p^\pm$ of $\Sigma$.

We consider the Freudenthal (end-point) compactification of $\Sigma$ (where one point is added for each puncture) and identify points corresponding to pairs of punctures $p^\pm$, $p\in\Lambda(\Sigma)$. We denote the resulting surface by $\bar\Sigma$.

Thus,
$$\bar{\Sigma}=\Sigma\coprod \S(\Sigma)\coprod \Lambda(\Sigma)$$

We shall extend the inclusion map to $\bar{\Sigma}$. As there are only finitely many punctures, it suffices to show that we can extend the map to the point in $\bar{\Sigma}$ corresponding
to each puncture or pair of punctures.

We first consider a point $z\in S(\Sigma)$ corresponding to a limit of points of $z_j\in S_j$. As $M$ is compact, by passing to a subsequence we can ensure that $\iota_j(z_j)$ converges to a point, which we take to be the image of
$z$ in the limiting map. It remains to show that this extension is
continuous.

Suppose $D(z)$ is a closed disc in $\bar\Sigma$ containing the point
$z$ in its interior and no other points of $S(\Sigma)$ and $\Lambda(\Sigma)$. Then for $j$ sufficiently large, the disc $D(z)$
can be identified with discs $D_j=D_j(z)\subset F_j=\Sigma_j$. Continuity is immediate
from the following lemma.
\begin{lemma}\label{cont}
Given $\delta>0$, there is a disc $D(z)$ as above such that for $j$
sufficiently large, the diameter of $\iota_j(D_j)$ is at most
$\delta$.
\end{lemma}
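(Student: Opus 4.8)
The plan is to control the diameter of $\iota_j(D_j)$ where $D_j$ is a small disc around a puncture $z_j\in S_j$, using a combination of an extremal length (modulus) argument and the monotonicity lemma. The point $z_j$ is a puncture of $\Sigma_j=\hat F_j$, so in $F_j$ it is a genuine point of the $\epsilon$-net; a small disc $D(z)$ in $\bar\Sigma$ around $z$ corresponds to a disc $D_j\subset F_j$ that, once we remove its centre, becomes a small cusp neighbourhood in the hyperbolic metric on $\Sigma_j$. Concretely, I would take $D(z)$ to be (the compactification of) a standard horocyclic cusp neighbourhood of the cusp of $\Sigma$ at $z$; for $j$ large this is identified with a cusp neighbourhood of $\Sigma_j$, which sits inside $F_j$ as a punctured topological disc $D_j\setminus\{z_j\}$.

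The key estimates proceed in two stages. First, I would use the fact that a small punctured-disc cusp neighbourhood has large conformal modulus: as the horocyclic neighbourhood shrinks, the modulus of the associated annulus $D_j\setminus(\{z_j\}\cup K)$ (for $K$ a slightly smaller neighbourhood) tends to infinity. Since $\iota_j(D_j)$ has area at most $A_0$, Ahlfors' theorem (Theorem~\ref{extrm}) produces a simple closed curve $\gamma_j$ separating $z_j$ from $\partial D_j$ whose length in the pullback metric is at most any prescribed $\delta'>0$, provided $D(z)$ was chosen small enough. Thus $\iota_j(D_j)$ contains a ``neck'' that can be made arbitrarily short. Second, the short curve $\gamma_j$ bounds a small topological disc $B(\gamma_j)\subset F_j$ containing $\iota_j(z_j)$, and — exactly as in the proof of Lemma~\ref{shortcurv} — because $B(\gamma_j)$ avoids $S_j\setminus\{z_j\}$ and $S_j$ is an $\epsilon$-net, every point of $B(\gamma_j)$ lies within $\epsilon$ of $z_j$ along a path meeting $\partial B(\gamma_j)$, so $B(\gamma_j)$ has diameter at most $2\epsilon+\delta'$. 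Then $\iota_j(D_j)\subset B(\gamma_j)$ (or differs from it by the bounded region), giving the desired bound on the diameter of $\iota_j(D_j)$.

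The main obstacle I anticipate is the last inclusion, i.e. showing that all of $\iota_j(D_j)$ — not merely $\iota_j(z_j)$ and a neighbourhood of it — lies in the small ball $B(\gamma_j)$. The curve $\gamma_j\subset\iota_j(A_j)$ separates the two boundary components of the annulus $A_j$, and its image separates $\iota_j(z_j)$ from $\iota_j(\partial D_j)$ only up to the failure of $\iota_j$ to be an embedding; one must rule out the possibility that the ``inner'' complementary component of $\gamma_j$ (the one containing $\iota_j(z_j)$) is in fact the large one while the genuinely small disc lies on the other side. This is where the monotonicity lemma enters: if the disc $B(\gamma_j)$ bounded by $\gamma_j$ that does \emph{not} contain $\iota_j(z_j)$ had large diameter, it would contain a metric ball of radius comparable to $\epsilon$ and hence, by Theorem~\ref{monot}, area at least $c\epsilon^2$; but one can arrange the annulus $A_j$ so that the ``outer'' region $\iota_j(D_j\setminus A_j)$ has area tending to $0$ (again by choosing $D(z)$ small and using the area bound together with the modulus growth), forcing $B(\gamma_j)$ to be the small component. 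Combining this dichotomy with the $\epsilon$-net argument pins down $\iota_j(D_j)$ inside a set of diameter $O(\epsilon)$, and refining the choice of $D(z)$ (hence of $\delta'$) makes this at most $\delta$.

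Finally, I would remark that the case of a pair of punctures $p^\pm\in\Lambda(\Sigma)$ coming from a pinched curve $\alpha_j$ is handled by the same scheme but is in fact easier in one respect: here no point of $S_j$ is removed, but the hyperbolic length of $\alpha_j$ tends to zero, so $\alpha_j$ itself (or a nearby geodesic) already gives a short separating curve, and the monotonicity lemma again forces the two complementary pieces to have small diameter; the identification $p^+\sim p^-$ in $\bar\Sigma$ is exactly what makes the extended map well defined and continuous across the node. I would state Lemma~\ref{cont} and its proof for the $S(\Sigma)$ case in detail and indicate that the $\Lambda(\Sigma)$ case follows by the analogous argument.
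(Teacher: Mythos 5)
Your setup — shrinking the cusp neighbourhood so its conformal modulus blows up, then applying Ahlfors' extremal-length bound to produce a separating curve $\gamma_j$ of length $<\eta$ enclosing $z_j$ — matches the paper's. But the core of the argument has a genuine gap. After producing $\gamma_j$, you bound the diameter of $B(\gamma_j)$ by the $\epsilon$-net argument: every point of $B(\gamma_j)$ is within $\epsilon$ of $z_j\in S_j$, so the diameter is at most $2\epsilon+O(\eta)$. This quantity does \emph{not} tend to zero as you shrink $D(z)$ (hence $\eta$), because $\epsilon$ is a fixed constant determined by $(M,g,H_0,A_0)$. Your closing sentence that ``refining the choice of $D(z)$ (hence of $\delta'$) makes this at most $\delta$'' is where the proof breaks: shrinking $\delta'$ improves only the $O(\eta)$ term, not the $2\epsilon$ term, and the claim needs the diameter to be $<\delta$ for \emph{arbitrary} $\delta>0$, in particular $\delta\ll\epsilon$.

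The missing step is the one the paper actually uses to convert ``short boundary'' into ``small diameter.'' In the paper's proof, the $\epsilon$-net bound is used only to show that the diameter of $B(\gamma_j)$ is $\leq 3\epsilon < R$, which is what allows $B(\gamma_j)$ to be lifted under $\exp_{x_0}$ via Lemma~\ref{explift}. The real diameter estimate is a radial projection argument in geodesic polar coordinates on $T_{x_0}F_j$: if some $x_0\in B(\gamma_j)$ were at distance $>\delta/3$ from $\gamma_j$, then the lifted curve $\gamma_j$ encloses the circle $\alpha$ of radius $\delta/3$ around the origin; radial projection $\gamma_j\to\alpha$ is length-nonincreasing (since $f(r,\theta)$ is increasing in $r$ by Lemma~\ref{geod}) and has degree one, so $l(\alpha)\leq l(\gamma_j)<\eta$; but Lemma~\ref{geod} also gives $l(\alpha)>\pi\delta/3$, a contradiction once $\eta<\delta/2$. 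This is what makes the diameter shrink with $\eta$, and it has no analogue in your argument. Incidentally, the monotonicity-lemma dichotomy you introduce to decide which side of $\gamma_j$ is ``small'' is not needed here (the disc is taken to be the side enclosing $z_j$ by construction); the monotonicity lemma is genuinely used in the paper only for the harder $\Lambda(\Sigma)$ case, Lemma~\ref{anncont}, where short boundary does \emph{not} force small diameter and the paper instead bounds area by a Stokes-theorem computation and then applies monotonicity — so your remark that the $\Lambda$ case ``is in fact easier'' is also inverted.
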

We first give a brief sketch of the proof. Using an appropriate extremal
length argument, we enclose $D_j$ in a disc whose boundary has small
perimeter. As before, we lift this disc under the exponential
map. We then use polar co-ordinates, for which we have obtained the
appropriate estimates in~\ref{geod}. These allow us to deduce a bound
on the diameter of the disc from the bound on the perimeter. We now turn to the details.

\begin{proof}[Proof of Lemma~\ref{cont}]
As the modulus of a disc punctured at a
point is infinite, we can choose $D(z)$ so that there is an annulus
$A(z)$ enclosing the puncture corresponding to $z$ so that the
modulus of the annulus is at least $M$, where $M$ is any specified
number. The annulus $A(z)$ corresponds to an annulus $A$ in $F_j$
which, for $j$ sufficiently large, has modulus greater than $M$. Fix
such a $j$.

Choosing $M$ sufficiently large and using an extremal length argument
as in Lemma~\ref{shortcurv} , we can find a curve $\gamma$ in $A$
enclosing $z_j$ with length $L(\gamma)$ less than $\eta$ in the
pullback metric, with $\eta>0$ to be specified. Let $B(\gamma)$ be the
disc bounded by $\gamma$ enclosing $z_j$. We shall show that if $\eta$
is small enough (depending only on $\delta$ and $M$), then the disc
$D_j$ has diameter less than $\delta$. This implies that the diameter
of the image $\iota_j(F_j)$ is less than $\delta$.

We choose $\eta<min(\delta/2,\epsilon/2)$. As the length of the
boundary $\gamma$ of $B(\gamma)$ is bounded by $\delta/2$, it suffices
to show that the distance from a point $x_0$ of $B(\gamma)$ to
$\gamma$ is at most $\delta/3$. Suppose this is not the case, find a
point $x_0$ with distance from the boundary greater than $\delta/3$.

Consider the exponential map from the tangent space at $x_0$. We have
seen that this is an immersion on the ball of radius
$10\epsilon$. Further, as before the diameter of the set $D_j$ is
bounded by $2\epsilon$. We choose a lift as in Lemma~\ref{explift} so
that the image of $x_0$ is the origin.

We now recall Lemma \ref{geod}.

We have assumed  that the distance between $x_0$ and $\gamma$ is
greater than $\delta/3$. Hence the lift of $\gamma$ (which we
continue to denote by $\gamma$) encloses the ball of radius
$\delta$ around the origin.

Consider the radial projection $p:\gamma\to\alpha$ of $\gamma$ on to
the boundary $\alpha$ of the ball of radius $\delta/3$. As
$f(r,\theta)$ is an increasing function of $r$, this is distance
decreasing. Thus, if $ds_\gamma$ and $ds_\alpha$ denote the oriented
arc lengths of the respective curves, $p^{*}(ds_\alpha)=\psi\cdot
ds_\gamma$, with $\psi\leq 1$.

As $\gamma$ encloses the origin, the projection has degree one (after
possibly reversing the orientation of $\gamma$). Thus, we have
$$l(\alpha)=\int_\alpha ds_\alpha=\int_\gamma
p^{*}(ds_\alpha)=\int_\gamma \psi\cdot ds_\gamma\leq \int_\gamma
ds_\gamma=l(\gamma) $$

Hence $l(\alpha)\leq l(\gamma)<\eta$. Now by Lemma~\ref{geod}, it
follows that $l(\alpha)>\pi\delta/3$. Hence, as
$\eta<\delta/2<\pi\delta/3$ we get a contradiction.
\end{proof}

We now turn to the case of a point $p\in\Lambda(\Sigma)$, which corresponds to a pair of ends $p^\pm$. As
before, we can find disc neighbourhoods of these, $D^\pm(p)$,
enclosed by annuli $A^\pm(p)$  of modulus at least $\mu$ (see
figure~\ref{cusp}), with $\mu$ to be specified. We denote the
boundaries of the disc by $\xi^\pm=\del D^\pm$.

Consider now a surface $\Sigma_j$ with $j$ large. The curves $\xi^\pm_j$
in $F_j$ corresponding to $\xi^\pm$
enclose an annulus $B_j$ of large modulus (for $j$ large) and hence
are separated by a curve $\alpha_j=\alpha_j(p)$ of length $l_j$ (in the pullback metric on $F_j$) so that as $j\to
\infty$, $l_j\to 0$. Hence we can pass to a subsequence so that the
images $\iota_j(\alpha_j)$ converge to a point $y$ in $M$. We extend the
map so that the point $p$ maps to $y$. We show that
this extension is continuous.

\begin{lemma}\label{anncont}
Given $\delta>0$, there are discs $D^\pm=D^\pm(p)$ as above so that for $j$ sufficiently large the diameter of $B_j=B_j(p)$ is at most $\delta$.
\end{lemma}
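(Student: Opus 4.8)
The plan is to bound the diameter of the annular region $B_j$ enclosed by $\xi_j^+$ and $\xi_j^-$ by the same two-scale strategy used in Lemma~\ref{cont}: first shrink the boundary curves until they have tiny perimeter using an extremal-length argument, then lift under the exponential map and exploit the polar-coordinate estimate of Lemma~\ref{geod} to trade the perimeter bound for a diameter bound. The new wrinkle, compared to Lemma~\ref{cont}, is that $B_j$ is an annulus, not a disc, so there is no single puncture to ``collapse onto'' — instead we must control how far the two boundary circles, together with the short curve $\alpha_j$ between them, are from each other. The main obstacle will be exactly this: ruling out that $B_j$ is a long thin annulus (picture a finger in the ambient metric) whose two ends are close in $\Sigma$ but far apart in $M$, even though the hyperbolic length of the pinching curve $\alpha_j$ tends to $0$. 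This is where the monotonicity lemma (Theorem~\ref{monot}) must enter, as the outline promised, since the extremal-length/Schwarz-type estimates alone only control one scale.

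Here is the order I would carry it out. First, choosing the modulus $\mu$ of $A^\pm(p)$ large enough, the corresponding annuli in $F_j$ have modulus $>\mu$ for $j$ large, so by Theorem~\ref{extrm} we may replace each $\xi_j^\pm$ by a curve of pullback length $<\eta$, with $\eta$ to be fixed, still enclosing the respective end; shrinking the discs we may assume $\ell(\xi_j^\pm)<\eta$ directly. Next, since the hyperbolic length of $\alpha_j$ goes to $0$ while $\alpha_j$ separates $\xi_j^+$ from $\xi_j^-$ inside a subsurface of area $\le A_0$, another extremal-length argument (the annulus between $\xi_j^+$ and $\alpha_j$, and between $\alpha_j$ and $\xi_j^-$, each have large modulus for $j$ large) forces the pullback length $\ell_j=\ell(\alpha_j)<\eta$ as well. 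So the whole ``skeleton'' $\xi_j^+\cup\alpha_j\cup\xi_j^-$ has total length $<3\eta<R/10$, and the sub-annuli it bounds have area $\le A_0$. Now pick any point $x_0\in B_j$ and apply Lemma~\ref{explift}: the exponential map $\exp_{x_0}$ is an immersion on the ball of radius $10\epsilon$, and — provided the diameter of $B_j$ in the pullback metric is $<\epsilon$, say, which we will first establish separately — we get a lift of (a piece of) $B_j$ with $x_0$ at the origin.

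The heart of the argument is then to show the distance from $x_0$ to $\xi_j^+\cup\xi_j^-$ is at most, say, $\delta/3$. Suppose not. Run the radial-projection argument of Lemma~\ref{cont}: project $\xi_j^+$ (or $\xi_j^-$, whichever side $x_0$ is ``inside'' relative to $\alpha_j$) radially onto the geodesic circle of radius $\delta/3$ about the origin; since $f(r,\theta)$ is increasing in $r$ (Lemma~\ref{geod}), projection is distance-decreasing, and since the curve $\alpha_j$ together with the short $\xi_j^\mp$ winds once around the origin the projection has degree one, giving $\ell(\text{circle}) \le \ell(\xi_j^+)+\ell(\alpha_j)+\ell(\xi_j^-) < 3\eta$; but the circle of radius $\delta/3$ has length $>\pi\delta/3$ by Lemma~\ref{geod}, so choosing $\eta<\min(\delta/2,\epsilon/2,\pi\delta/9)$ yields a contradiction. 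Since the boundary curves themselves have length $<\eta<\delta/2$ and $\alpha_j$ has length $<\delta/2$, every point of $B_j$ is within $\delta/3+\delta/2 < \delta$ of every other, so $\operatorname{diam}\iota_j(B_j)\le\delta$. The one loose end is the a priori bound $\operatorname{diam}\iota_j(B_j)<\epsilon$ needed to invoke Lemma~\ref{explift} in the first place: this I would get by first running the entire argument with $\delta$ replaced by $\epsilon$ — a bootstrap — or, if that is circular, by a direct application of the monotonicity lemma, observing that if $B_j$ had large pullback diameter it would contain an $\epsilon$-net of many points and hence (Theorem~\ref{monot}) area $>A_0$, contradicting the area bound. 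That monotonicity input is the step I expect to require the most care to state cleanly.
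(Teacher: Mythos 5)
Your proposal correctly identifies the topological obstacle (the ``long thin annulus'' danger) and correctly intuits that the monotonicity lemma must enter somewhere, but the execution of the central step is wrong, and it is wrong for exactly the reason the paper flags just before its own proof. The radial-projection/winding-number argument from Lemma~\ref{cont} cannot be transported to the annulus. In the disc case, after lifting to $T_{x_0}F$ with $x_0$ mapping to the origin, the image of $\del B(\gamma)$ necessarily has winding number $\pm 1$ about the origin because it bounds an immersed disc containing the origin in its interior. No analogue holds for an annulus: a flat right cylinder $[0,L]\times S^1_\eta$ with $x_0$ at its midpoint lifts (via $\exp_{x_0}$) to a strip in $T_{x_0}F$ whose two boundary lines have winding number zero about the origin, and $L$ can be arbitrarily large while $\eta$ is arbitrarily small. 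Your phrase ``$\alpha_j$ together with the short $\xi_j^\mp$ winds once around the origin'' is not meaningful: $\xi_j^+$, $\xi_j^-$, $\alpha_j$ are three disjoint parallel circles in the annulus and none of them needs to enclose the origin in the lift. Even if you split the annulus along a short arc $\beta$ to manufacture a disc and do get a winding curve, its length is controlled by $l(\xi^+)+l(\xi^-)+2l(\beta)\approx 4\epsilon$, not by $3\eta$, and $\beta$ may pass arbitrarily close to $x_0$, so the hypothesis of the projection argument (that $\del\Delta_j$ stays outside $B(x_0,\delta/3)$) fails.

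The paper's actual argument for the heart of Lemma~\ref{anncont} is of a different nature and does not involve degree or winding at all. After shrinking $\xi^\pm_j$ to curves $\gamma^\pm_j$ of length $<\eta$ and splitting along a short arc $\beta$ (length $\le 2\epsilon$, found via the $\epsilon$-net) to form a disc $\Delta_j$, it lifts $\Delta_j$ and uses polar coordinates plus Stokes' theorem to show $\mathrm{Area}(\Delta_j)=\mathrm{Area}(C_j)<\epsilon\,l(\del\Delta_j)=O(\epsilon^2)$, which (for $\epsilon$ small) places $C_j$ below the Hoffman--Spruck bubbling threshold $v_0$. The isoperimetric inequality applied to $C_j$ (whose boundary has length $<2\eta$) then yields $\mathrm{Area}(C_j)\le V(\eta)$ with $V(\eta)\to 0$ as $\eta\to 0$. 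Finally the monotonicity lemma finishes it: if some $x_0\in C_j$ were at distance $>\delta/4$ from $\del C_j$, then $B(x_0,\delta/4)\subset C_j$ would have area $\ge c(\delta/4)^2$, contradicting $\mathrm{Area}(C_j)\le V(\eta)$ for $\eta$ small. So the monotonicity lemma is the engine of the main diameter bound, not merely a device to justify the lift. Your proof, as written, leaves the central step unproved; you should replace the projection argument by this area-then-monotonicity chain.

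Two smaller points. First, your a priori diameter bound on $B_j$ needed to justify the exponential lift is more simply obtained from the $\epsilon$-net, not from monotonicity: since $C_j$ lies in the complement of the net $S_j$, every point is within $\epsilon$ of $\del C_j$, and the two boundary components are joined by an arc of length $\le 2\epsilon$, giving $\operatorname{diam} C_j\lesssim\epsilon$ directly. Second, you do not need a separate extremal-length step to shorten $\alpha_j$: once $\gamma^\pm_j$ are short, $\alpha_j$ plays no further role in the argument, since the area and monotonicity estimates are applied to the annulus $C_j$ between $\gamma^+_j$ and $\gamma^-_j$.
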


Note that in this case, the upper bounds on the curvature and the
length of the boundary do not suffice, as there are flat annuli with
arbitrarily short boundaries with arbitrarily large diameters. We
shall use an indirect argument, which we sketch below.

Consider an annulus as above whose boundary curves have length
less than a small real number $\delta$. As the annulus is
contained in the complement of an $\epsilon$-net, we can bound the
distance between the two boundary components. This together with
the bound on the boundary components and the upper bound on
curvature gives an upper bound on the area of the annulus. By
ensuring $\epsilon$ (hence $\delta$) is sufficiently small, we can
thus ensure that we are not in the \emph{bubbling} case of the
isoperimetric inequality (i.e., we ensure that volume is less than
$v_0=v_0(g)$ of Theorem~\ref{hs}). Hence, as the upper bound
$\delta$ on the perimeter tends to zero, so does the area. Using
the monotonicity lemma, we can deduce that the diameter also tends
to $0$. We now turn to the details.

\begin{proof}[Proof of Lemma~\ref{anncont}]
As the annuli $A^\pm_j$ have modulus greater
than $\mu$, by choosing $\mu$ sufficiently large we can ensure that
there are curves $\gamma^\pm_j$ separating the boundary
components of the annuli with lengths at most $\epsilon$. We let $C_j=C_j(p)$
be the annulus enclosed by the curves $\gamma^\pm_j$. We shall show
that the diameter of $C_j$ is small, from which a bound on the
diameter of the image of $B_j$ follows.


Observe that as the annulus $C_j$ is contained in the complement of an
$\epsilon$-net in $F_j$, as before the boundary components are at most a
distance $2\epsilon$ apart. Namely, as before each point is a distance
less than $\epsilon$ from at least one of the boundary components. By
connectedness, some point must have distance less than $\epsilon$
from both the boundary components.

Choose an embedded arc $\beta$ of length at most $2\epsilon$ joining
the boundary components. We obtain a disc $\Delta_j$ from the annulus $C_j$
by splitting along the curve $\beta$. Observe that all the above
results continue to hold when the inclusion map of the disc is
replaced by the quotient map from the disc to $A_j$. Choosing
$\eta<\epsilon/2$, the disc $\Delta_j$ has boundary of length at most
$6\epsilon$.

As before, we pick a point $x_0$ in $B$ and find a lift of $\Delta_j$ with
respect to the exponential map based at $x_0$ so that the lift of
$x_0$ is the origin. Recall that there is a constant $v_0=v_0(B(g))>0$
associated with the isoperimetric inequality (Theorem~\ref{hs}) which
depends only on $M$.

\begin{lemma}
If $\epsilon>0$ is chosen sufficiently small, depending only on $M$,
then the area of $\Delta_j$ with the pullback metric is less than $v_0$.
\end{lemma}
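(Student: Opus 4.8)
The plan is to exploit the fact that, for a \emph{disc}, a one-sided bound on Gaussian curvature together with a bound on the length of the boundary already bounds the area --- even though a one-sided curvature bound gives no control at all on the area of a metric ball of given radius (hyperbolic discs of curvature $-c$ have radius-$\rho$ balls whose area blows up as $c\to\infty$). I would make this quantitative using exactly the polar coordinates of Lemma~\ref{geod} on the lift of $\Delta_j$ under the exponential map.

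\textbf{Set-up of the lift and a containment estimate.} Recall that $\Delta_j$ carries the metric induced from $M$, whose Gaussian curvature is at most $K_0$ (Lemma~\ref{up}), and that $\del\Delta_j$ has length at most $6\epsilon$. First I would observe that every point of $\Delta_j$ lies within distance $\epsilon$ of $\del\Delta_j$: since $C_j$ (hence $\Delta_j$) lies in the complement of the $\epsilon$-net $S_j$, a minimising path from a point to a nearest point of $S_j$ has length $<\epsilon$ and must leave $\Delta_j$, so it meets $\del\Delta_j$ (a path crossing the cut $\beta$ meets $\del\Delta_j$ even sooner). As $\del\Delta_j$ is a single closed curve of length $\le 6\epsilon$ it has diameter $\le 3\epsilon$, so with $x_0\in\Delta_j$ every point of $\Delta_j$ lies within distance $5\epsilon$ of $x_0$. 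Now let $\tilde\iota\co\Delta_j\to T_{x_0}F$ be the lift of the inclusion under $\exp_{x_0}$ with $\tilde\iota(x_0)=0$ provided by Lemma~\ref{explift} (legitimate once $6\epsilon<R/10$); it is an isometric immersion onto its image in $(B(0,R),\exp_{x_0}^\ast g)$, a ball on which $\exp_{x_0}$ is an immersion and whose curvature is again $\le K_0$. Writing $\exp_{x_0}^\ast g=dr^2+f^2(r,\theta)\,d\theta^2$ as in Lemma~\ref{geod}, we have $|dr|\le 1$, so any path out of $0$ of length $<R$ stays in $\{r\le\text{length}\}$; applying this to the $\tilde\iota$-image of a minimising path from $x_0$ to an arbitrary point of $\Delta_j$ gives $\tilde\iota(\Delta_j)\subset\{r\le 5\epsilon\}$, for which we also require $5\epsilon<R$.

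\textbf{The area estimate via Stokes.} By Lemma~\ref{geod}, $f(\cdot,\theta)$ is increasing on $[0,R)$; set $F(r,\theta)=\int_0^r f(s,\theta)\,ds$. The $1$-form $\omega=F\,d\theta$ extends smoothly across the origin --- it is the primitive $\tfrac{F}{r^2}(x\,dy-y\,dx)$ produced by the radial homotopy operator applied to the smooth area form $f\,dr\wedge d\theta$, so $\tfrac{F}{r^2}$ is smooth --- and $d\omega=f\,dr\wedge d\theta$. Stokes' theorem on the compact surface-with-boundary $\Delta_j$ then gives
$$\mathrm{Area}(\Delta_j)=\int_{\Delta_j}\tilde\iota^\ast(d\omega)=\int_{\del\Delta_j}\tilde\iota^\ast\omega .$$
Parametrising $\del\Delta_j$ by arc length $t\in[0,L]$ with $L\le 6\epsilon$ and writing $\tilde\iota(\sigma(t))=(r(t),\theta(t))$, unit speed gives $f(r(t),\theta(t))\,|\theta'(t)|\le 1$; monotonicity of $f$ gives $|F(r,\theta)|\le r\,f(r,\theta)$; and $r(t)\le 5\epsilon$ on $\del\Delta_j$. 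Hence
$$\mathrm{Area}(\Delta_j)\le\int_0^L |F(r(t),\theta(t))|\,|\theta'(t)|\,dt\le 5\epsilon\int_0^L f(r(t),\theta(t))\,|\theta'(t)|\,dt\le 5\epsilon\,L\le 30\,\epsilon^2 .$$
Choosing $\epsilon$ small enough that $30\epsilon^2<v_0$, and also $6\epsilon<R/10$ (which forces $5\epsilon<R$) --- all thresholds depending only on $M$ (through $K_0$, $R$ and $v_0$) --- finishes the proof.

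\textbf{Where the difficulty lies.} The only non-formal point is the second displayed inequality above: one cannot simply bound $\mathrm{Area}(\Delta_j)$ by the area of the ball $\{r\le 5\epsilon\}$, because $f$ --- and hence that area --- has no upper bound under a merely one-sided curvature bound. It is the Stokes computation with the specific primitive $F\,d\theta$ that converts the perimeter bound into an area bound, and it works precisely because $f$ is increasing, which is the content of Lemma~\ref{geod} that is doing the work here. The accompanying technical care is checking that $F\,d\theta$ is smooth at the cone point $\tilde\iota(x_0)=0$; one can either appeal to the radial-homotopy primitive as above, or excise a disc $B(0,\rho)$, apply Stokes on the complement, and let $\rho\to 0$, noting that $\int_{\{r=\rho\}}F\,d\theta=\int_0^{2\pi}F(\rho,\theta)\,d\theta\le 2\pi\rho\sup_\theta f(\rho,\theta)\to 0$.
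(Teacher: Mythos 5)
Your proof is essentially the same as the paper's: both lift $\Delta_j$ to the tangent space under the exponential map, write the area form as $d(F\,d\theta)$ in the polar coordinates of Lemma~\ref{geod}, apply Stokes' theorem, and use the monotonicity of $f$ (giving $F(r,\theta)\le r\,f(r,\theta)$) together with the Pythagorean bound on $d\theta/dt$ to convert the perimeter bound into the area bound $\lesssim\epsilon^2$. Your write-up is in fact a bit more careful than the paper's --- you justify the containment radius and the smoothness of $F\,d\theta$ at the origin explicitly, and your constant $30\epsilon^2$ corrects a minor slip in the paper where $l(\zeta)<5\epsilon$ is used despite the earlier bound of $6\epsilon$ on $l(\partial\Delta_j)$ --- but the underlying argument is identical.
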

\begin{proof}
As the the origin is in the interior of $\Delta_j$ and the distance
of any point in $B(\gamma)$ to the boundary is at most $\epsilon$, the
disc $\Delta_j$ is contained in the ball of radius $\epsilon$ around the
origin.

Using polar co-ordinates as before, the area form on $\Delta_j$ is
$\omega=f(r,\theta)dr\wedge d\theta$. Let
$F(r,\theta)=\int_0^rf(\rho,\theta)d\rho$ and
$\alpha=F(r,\theta)d\theta$. Then the area form $\omega=d\alpha$,
hence by Stokes theorem
$$Area=\int_{\Delta_j}\omega=\int_{\del \Delta_j} \alpha$$.

Let $\zeta=\del \Delta_j$. In polar co-ordinates, we can express
$\zeta=(\zeta_r,\zeta_\theta)$. In terms of these co-ordinates,

$$Area=\int_{\del \Delta_j} \alpha=\int F(r,\theta)\frac{d\zeta_\theta}{dt} dt$$

Note that as $f(r,\theta)$ is increasing as a function of $R$,
$F(r,\theta)\leq \epsilon f(r,\theta)$ for $r<\epsilon$. Further, by
Pythagoras theorem the oriented arc length $ds$ satisfies $ds>
\frac{d\zeta_\theta}{dt}$. Thus, it follows that

$$Area= \int F(r,\theta)\frac{d\zeta_\theta}{dt} dt<\epsilon l(\zeta)$$

As $l(\zeta)<5\epsilon$, the lemma follows.
\end{proof}

Assume $\epsilon>0$ has been chosen to satisfy the previous
lemma. Then, by the isoperimetric inequality, if $\eta$ is
sufficiently small, the volume of $C_j$ is less than $V(\eta)$,
where $V(\eta)$ is a function of $\eta$ such that $V(\eta)\to 0$
as $\eta\to 0$.

We show that if $\eta>0$ is sufficiently small, then the diameter of
the annulus $C_j$ is less than $\delta$. As before, it suffices to
show that the distance of each point $x_0\in C_j$ to the boundary of
the annulus is at most $\delta/4$.

Suppose the distance of $x_0$ to $\del C_j$ is greater than
$\delta/4$. It follows by the monotonicity lemma that the area of
$C_j$ is at least $V_\delta$, with $V_\delta$ depending only on
$\delta$ and $M$. Choose $\eta$ such that $V(\eta)<V(\delta)$, to
get a contradiction. Thus, we get a bound on the diameter showing
continuity as required. This completes the proof of Lemma~\ref{anncont}.

\end{proof}

\section{Proof of Theorem~\ref{mai}}\label{S:proof}

We can now complete the proof of Theorem~\ref{mai}. We have already constructed a limiting map on $\bar\Sigma$. We shall express $\bar{\Sigma}$ as the quotient of a surface $F$ and construct appropriate diffeomorphisms $\varphi:F_j\to F$ so that the maps $i_j\circ \varphi^{-1}$ converge.

Assume that we have chosen a subsequence so that we have a corresponding limit map $\iota$ on $\bar\Sigma$. Let $F$ be a surface of genus $m$. Identify $S(\Sigma)$ with a subset $S(F)$ of $F$. Then for a collection of disjoint curves $\alpha(p)\subset F-S(F)$, $p\in\Lambda(\Sigma)$, we have a homeomorphism
of $\Sigma$ with $F-\cup_{p\in\Lambda(\Sigma)}\alpha(p)-S(F)$. This extends continuously to a homeomorphism of the quotient of $F$ with each curve $\alpha(p)$ identified to a point, which is mapped to $p\in\bar\Sigma$. Choose and fix
a corresponding quotient map $q:F\to \bar\Sigma$. Let $i:F\to M$ be
the composition $i=\iota\circ q$. Observe that $\Sigma$ can be identified with a subset of $F$ so that $q$ the identity map on $\Sigma$ and $i=\iota$ on $\Sigma$.

We construct next diffeomorphisms $\varphi_j:F_j\to F$. These will be extensions of the diffeomorphisms $\psi_j:\kappa_j\to\Theta_j\subset\Sigma$ of Section~\ref{S:hyp} using the identification of $\Sigma_j$ with a subset of $F_j$.

We recall that the set $\Theta_j$ is the complement of a union of horocyclic neighbourhoods of cusps. Thus, there are punctured discs $\Delta(j;z)\subset\Sigma$, $z\in S(\Sigma)$ and $\Delta^\pm(j;p)\subset\Sigma$, $p\in\Lambda(\Sigma)$ so that $\Theta_j$ is the complement of the interiors of the sets $D(j;z)$ and $D^\pm(j;p)$. Without loss of generality we assume that $k\geq j$ implies that $\Delta(k;z)\subset \Delta(j;z)$ and $\Delta^\pm(k;p)\subset \Delta^\pm(j;p)$ for all $z$ and $p$.

We shall make use of the constructions of Lemmas~\ref{cont} and~\ref{anncont}. As the sets $\Theta_j$ form an exhaustion of $\Sigma$, for each fixed $z\in S(\Sigma)$ or $p\in\Lambda(\Sigma)$, the intersection of the corresponding punctured discs $\Delta(j;p)$ and $\Delta^\pm(j;p)$ is empty. It follows as in the proofs of Lemmas~\ref{cont} and~\ref{anncont} that if $D_k(z)$ and $B_k(p)$ denote the corresponding discs and annuli in $F_k$ for $k$ sufficiently large, then the diameters of these sets are bounded above by $\delta_j$ so that $\delta_j\to 0$ as $k\to\infty$.

We take $\varphi_j$ to be an extension of $\psi_j$ so that the complement of the set $\kappa_j\subset F_j$ is mapped to the complement of the set $\Theta_j\subset\Sigma\subset F$. Any compact set $K\subset\Sigma$ is contained in $\Theta_j$ for $j$ sufficiently large. Hence,
$i_j\circ\varphi^{-1}=i_j\circ\psi_j^{-1}$ converges to  $i=\iota:\Sigma\to M$. Finally, for the points of $S(F)$ and points on $\alpha_i$, by the continuity of the extension of $\iota:\Sigma\to M$ to $\bar\Sigma$, we see that $i_j\circ\varphi^{-1}:F\to M$ converges to $i=\iota\circ q:F\to M$.

\section{A limiting metric}

We continue to use the notation of the previous section. Using the
diffeomorphisms $\varphi_j$, we can identify the surfaces $F_j$
with $F$. Hence the pullback metrics on the surface $F_j$ give
Riemannian metrics on $F$ which have associated distance functions
$d_j$. We shall construct a limiting pseudo-metric $d$ on the
surface $F$. Recall that this is a symmetric function $d:F\times
F\to [0,\infty)$ that satisfies the triangle inequality but not in
general the positivity condition for metrics.

\begin{theorem}
On passing to a subsequence, the metrics $d_j$ converge uniformly on $F$ to a pseudo-metric $d$.
\end{theorem}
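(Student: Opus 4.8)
The plan is to prove the statement in two stages: first upgrade the pointwise convergence of $d_j$ along a subsequence to \emph{uniform} convergence on $F\times F$, and then verify that the uniform limit $d$ is a pseudo-metric. Since $F\times F$ is compact, if we can establish that the family $\{d_j\}$ is equicontinuous, then Arzel\`a--Ascoli gives a subsequence converging uniformly; the limit is automatically continuous. Equicontinuity of $d_j$ reduces, by the triangle inequality $|d_j(p,q)-d_j(p',q')|\le d_j(p,p')+d_j(q,q')$, to a single equicontinuity estimate: for every $\delta>0$ there is a neighbourhood of the diagonal in $F$ on which $d_j<\delta$, uniformly in $j$. So the first — and main — task is an \emph{a priori} modulus-of-continuity estimate for the identity map $(F,d_j)\to F$ near each point of $F$, independent of $j$.

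For a point $x\in\Sigma\subset F$ lying in a fixed compact set $K$, this is essentially Lemma~\ref{Lipsc}: the maps $\iota_j=i_j\circ\varphi_j^{-1}$ restricted to $K$ are uniformly Lipschitz (with constant $\kappa=\kappa(K)$) with respect to the hyperbolic metric on $\Sigma$, so for hyperbolically nearby points $d_j(x,y)\le\kappa\cdot d_{\mathbb H}(x,y)$, giving the required uniform modulus of continuity away from the punctures. Near a point $z\in S(F)$ this is exactly Lemma~\ref{cont}, and near a (collapsed) pair $p\in\Lambda(\Sigma)$ it is Lemma~\ref{anncont}: in both cases we showed that, for $j$ large, a suitable disc or annulus $D_k(z)$, $B_k(p)$ in $F_j\cong F$ containing the point has $\iota_j$-image of diameter $\le\delta_j\to 0$, which says precisely that $d_j$ of any two points in that neighbourhood is at most $\delta_j$. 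Since $S(F)\cup\Lambda(\Sigma)$ is finite, combining these three regimes and covering $F$ by finitely many such neighbourhoods together with finitely many compact pieces of $\Sigma$ yields: for every $\delta>0$ there is an open cover $\{U_\ell\}$ of $F$ and an index $J$ so that $d_j(p,q)<\delta$ whenever $p,q$ lie in a common $U_\ell$ and $j\ge J$. Enlarging $J$ to absorb the finitely many remaining indices $j<J$ (each $d_j$ is an honest continuous metric, hence uniformly continuous on the compact $F\times F$) gives the uniform equicontinuity of the whole family. The main obstacle is packaging the three already-proven local estimates (Lemmas~\ref{Lipsc}, \ref{cont}, \ref{anncont}) uniformly across the finite set of punctures and a compact exhausting piece of $\Sigma$; each ingredient is in hand, but one must be careful that the diffeomorphisms $\varphi_j$ are chosen consistently (as in Section~\ref{S:proof}) so that ``nearby in $F$'' translates correctly to the hyperbolic/disc/annulus pictures for all large $j$ simultaneously.

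It remains to check that the uniform limit $d$ is a pseudo-metric. Symmetry and nonnegativity pass to the limit trivially, as does the triangle inequality: $d_j(p,q)\le d_j(p,r)+d_j(r,q)$ for all $j$ forces $d(p,q)\le d(p,r)+d(r,q)$. Reflexivity $d(p,p)=0$ is immediate since $d_j(p,p)=0$ for all $j$. (Positivity will generally fail — points identified by the limiting map, e.g. the two ends over a $p\in\Lambda(\Sigma)$ or points collapsed in a ``bubble'', have $d$-distance zero — which is exactly why $d$ is only a pseudo-metric and not a metric; no argument is needed here beyond noting this is consistent with the claim.) Finally, a diagonal argument over an exhaustion already underlies the construction, but since $F\times F$ is compact a single application of Arzel\`a--Ascoli to the equicontinuous, pointwise-bounded (by $\sup_j \mathrm{diam}(F_j,d_j)$, which is finite once equicontinuity is known) family $\{d_j\}$ suffices to extract the uniformly convergent subsequence. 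This completes the proof.
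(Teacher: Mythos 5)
Your proposal is correct and rests on the same three ingredients as the paper's proof --- the uniform Lipschitz estimate of Lemma~\ref{Lipsc} on compact pieces of $\Sigma$, and the small-diameter estimates near the punctures from Lemmas~\ref{cont} and~\ref{anncont} --- but it organizes them differently. The paper proceeds in stages: it first establishes equicontinuity of $\{d_k\}$ on each compact set $\Theta_j\times\Theta_j$ via Lemma~\ref{Lipsc} alone, then runs a diagonal argument over the exhaustion $\{\Theta_j\}$ of $\Sigma$ to produce a limit $d$ on $\Sigma$ with uniform convergence on each $\Theta_j\times\Theta_j$, then extends $d$ to all of $F$ by continuity using the $\delta_j\to 0$ diameter bounds on the cusp and collar neighbourhoods, and only at the end patches these estimates together (via the triangle inequality) into the bound $\vert d_j(p,q)-d(p,q)\vert\le 2\delta_j+\delta_j'$ giving uniform convergence on $F\times F$. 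You instead assemble all three estimates into a single equicontinuity statement on all of $F\times F$ and apply Arzel\`a--Ascoli once. This is a legitimate reorganization, and arguably conceptually cleaner, but note that it genuinely requires the ``absorb the finitely many small indices'' step you mention: Lemmas~\ref{cont} and~\ref{anncont} give a $\delta$-small $d_j$-diameter only for $j\ge J(\delta)$ with $J(\delta)\to\infty$, so the tail that is $\delta$-equicontinuous changes with $\delta$, and one must use the uniform continuity of each individual $d_j$ (compactness of $F\times F$) to refine the cover and handle $j<J(\delta)$. The paper's staged approach sidesteps this by never needing equicontinuity of the whole family near the punctures --- the diagonal argument happens entirely on $\Sigma$, where Lemma~\ref{Lipsc} suffices, and the punctures are handled by the continuity extension plus an explicit estimate. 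What the paper's route buys is an explicit rate of convergence and a constructive description of $d$; what yours buys is a single application of Arzel\`a--Ascoli rather than an exhaustion-plus-extension argument. The verification at the end that the limit is a pseudo-metric (symmetry, triangle inequality, reflexivity passing to the limit; positivity failing exactly at the identified points) is correct and matches the paper's brief remark.
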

\begin{proof}
Let $g_0$ denote the fixed hyperbolic metric on $\Sigma$ and $d_0$ the
distance function of this metric. Our first step is to show that the distance functions $d_j$ converge on $\Sigma$.
\begin{lemma}
The family of function $d_j:\Sigma\times\Sigma\to\R$ is equicontinuous on
every compact subset $K$ of $\Sigma \times \Sigma$, where we consider
the product metric obtained from $d_0$ on $\si \times \si$.
\end{lemma}
\begin{proof}
Showing equicontinuity on the product is equivalent to showing that given $\eta>0$ there is a $\delta>0$ so that for pairs $(p_i,q_i)$, $i=1,2$, such that $d_0(p_1,p_2)<\delta$ and $d_0(q_1,q_2)<\delta$, we have $\vert d_j(p_1,q_1)-d_j(p_2,q_2)\vert<\eta$. Note that by the triangle inequality,
$$\vert d_j(p_1,q_1)-d_j(p_2,q_2)\vert<d_j(p_1,p_2)+d_j(q_1,q_2)$$

Hence it suffices to show that we can choose $\delta$ so that $d_0(p_1,p_2)<\delta$ implies $d_j(p_1,p_2)<\eta$. But this follows from  Lemma~\ref{Lipsc}, where we have shown the uniform Lipshitz property of the functions $d_j$ on compact subsets of $\Sigma$.
\end{proof}

We shall apply the above to the compact sets
$\Theta_j\in \Sigma=F$ of the previous section, which exhaust
$\Sigma$.  By the above Lemma the distance
functions $d_k$ are uniformly Lipshitz on $\Theta_j$ for each $j$. Hence,
we can iteratively pass to subsequences and use a diagonal sequence to
ensure that the metrics $d_k$ have a limit $d$ with convergence
uniform on each set $\Theta_j\times \Theta_j$. More precisely, we can ensure
that if $p,q\in \Theta_j$ and $k\geq j$, then $\vert d_k(p,q)-d(p,q)\vert<\delta_j'$ with $\delta_j'\to 0$ as $j\to\infty$.

As $d$ is the limit of metrics, it is a pseudo-metric on
$\Sigma$. Let $\bar{\Delta}(j;z)$ denote the closure of $\Delta(j;z)$ in $F$ and
$\bar{\Delta}(j;p)$ denote the closure of  $\Delta^+(j;p)\cup \Delta^-(j;p)$ in $F$. Observe that these sets are identified with sets $D_j(z)$ and $B_j(p)$ under the map $\varphi_j$ and hence have diameter at most $\delta_j$ in the metric $d_k$ for $k$ sufficiently large (as in the previous section). Hence the diameter of the sets $\bar{\Delta}(j;z)$ and $\bar{\Delta}(j;p)$ in the pseudo-metric $d$ is at most $\delta_j$.

We shall extend $d$ to $F$ by continuity. Consider first the case
where $p\in \Sigma$ and $q\in \alpha(p)$ for some $p\in\Lambda(\Sigma)$. Consider a sequence $q_i$ in
$\Sigma$ converging to $q$. Without loss of generality we can assume
that $q_j \in \bar{\Delta}(j;p)$ for some $p$. We claim that the sequence $d(p,q_j)$ is
Cauchy. For $k\geq j$, as $q_k\in \bar{\Delta}(k;p)\subset \bar{\Delta}(j;p)$ and the
diameter of $\bar{\Delta}(j;p)$ in the pseudo-metric $d$ is at most $\delta_j$, $d(q_j,q_k)\leq \delta_j$. Hence, by
the triangle inequality
$$\vert d(p,q_j)-d(p,q_k)\vert\leq  d(q_j,q_k)\leq \delta_j$$

It follows that the sequence $d(p,q_j)$ is Cauchy and hence
converges to a limit which we define to be $d(p,q)$. Observe that
if $q_j'$ is another sequence in $\bar{\Delta}(k;p)$, then as above $\vert
d(p,q_j)-d(p,q'_j)\vert<\delta_j$. Hence the limit is well-defined.

We can similarly define $d(p,q)$ if $q\in S(\Sigma)$. In
case neither $p$ nor $q$ are in $\Sigma$, we use sequences $p_j'$ and
$q_j'$ in $\Sigma$ converging to these points. As above we get Cauchy
sequences with limit independent of the choice of sequence.

We finally show that the convergence is uniform on all of $F$. Suppose
now that $p,q\in F$ are arbitrary. We shall find a uniform upper bound
for the quantity $\vert d_j(p,q)-d(p,q)\vert$. Suppose $p,q\in \Theta_j$,
then by as above $\vert d_j(p,q)-d(p,q)\vert\leq \delta_j'$. Otherwise,
one of $p$ and $q$ is in the interior of some set of the form $\bar{\Delta}(j;p)$
or $\bar{\Delta}(j;z)$. We consider the case when $p\in \Theta_j$ and $q\in \bar{\Delta}(j;p)$
as the other cases are similar.

Let $q'$ be a point in the boundary $\del \bar{\Delta}(j;j)=\bar{\Delta}(j;p)\cap
\Theta_j$. Then as above, $\vert d_j(p,q')-d(p,q')\vert<\delta_j'$. Further, as
the set $\bar{\Delta}(j;p)$ has diameter at most $\delta_j$ in the (pseudo)metrics
$d_j$ and $d$, $d(q,q')\leq \delta_j$ and $d_j(q,q')\leq \delta_j$

By the triangle inequality, it follows that
$$\vert d_j(p,q)-d(p,q)\vert\leq 2\delta_j+\delta'_j$$

This shows that we have uniform convergence of the metrics $d_k$ to
$d$ on all of $F$
\end{proof}

We shall use the notation
$$\vert d_j-d\vert := sup_{p,q\in F} \vert d_j(p,q)-d(p,q)\vert$$

\section{Fractal dimension of the limit}\label{Sec:dim}


We have constructed a limiting metric $d$ on the surface $F$. In this
section, we show that this metric has fractal dimension two and has finite, non-zero area in an appropriate sense. This gives a proof of Theorem~\ref{dim}

Now we come to the main lemma of this section.

\begin{lemma}\label{alp}
Let $(F,g)$ be a compact Riemannian 2-manifold with sectional
curvature $K \le K_0$.  For any $p \in F$ and $0<\de< \ \frac
{1}{\sqrt {3K_0}}$, if
$$ vol(B(p, \de)) > C \de^2,$$
then there is a $\de ' < \de$ with
$$ \int_{B(p,\de ')} Kdv \le 2 \pi - \frac {C}{2}.$$
\end{lemma}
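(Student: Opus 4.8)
The statement compares a volume lower bound on a geodesic ball with a Gauss–Bonnet-type integral of curvature. The natural tool is the polar-coordinate formula from Lemma~\ref{geod}: writing $\exp_p^\ast g = dr^2 + f^2(r,\theta)\,d\theta^2$ on $B(0,R)$ with $R = \pi/(3\sqrt{K_0})$, we have the Jacobi equation $f_{rr} + Kf = 0$ along each radial geodesic, together with the initial conditions $f(0,\theta)=0$, $f_r(0,\theta)=1$. The plan is to derive the conclusion by a contrapositive/averaging argument: if the curvature integral over \emph{every} sub-ball $B(p,\de')$ with $\de'<\de$ exceeds $2\pi - C/2$, then $f$ cannot grow fast enough to make $\mathrm{vol}(B(p,\de))$ as large as $C\de^2$.

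\textbf{Key steps.} First I would express everything inside the conjugate radius, which is legitimate since $\de < 1/\sqrt{3K_0} < R$ (after a harmless real-analytic perturbation and an appeal to Lemma~\ref{cut}, so that geodesic spheres are piecewise-smooth and the co-area/Gauss–Bonnet formulas apply cleanly on $B(p,\de')$). Second, I would invoke the first variation of the length of the geodesic circle: $\frac{d}{dr}\,L(\partial B(p,r)) = \frac{d}{dr}\int_0^{2\pi} f(r,\theta)\,d\theta = \int_0^{2\pi} f_r(r,\theta)\,d\theta$, and then differentiate once more and use $f_{rr} = -Kf$ to get
$$\frac{d^2}{dr^2} L(\partial B(p,r)) = -\int_0^{2\pi} K f\,d\theta = -\int_{\partial B(p,r)} K \,ds = -\frac{d}{dr}\int_{B(p,r)} K\,dv,$$
the last equality being the co-area formula. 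Integrating, $\frac{d}{dr}L(\partial B(p,r)) = 2\pi - \int_{B(p,r)} K\,dv$, which is the infinitesimal Gauss–Bonnet relation. Now suppose, for contradiction, that $\int_{B(p,\de')} K\,dv > 2\pi - C/2$ for all $\de' \in (0,\de)$. Then $\frac{d}{dr}L(\partial B(p,r)) < C/2$ for all $r<\de$, and since $L(\partial B(p,r))\to 0$ as $r\to 0$, we get $L(\partial B(p,r)) < (C/2)\,r$. Finally, integrating once more via the co-area formula $\mathrm{vol}(B(p,\de)) = \int_0^\de L(\partial B(p,r))\,dr < \int_0^\de (C/2)\,r\,dr = (C/4)\de^2 < C\de^2$, contradicting the hypothesis.

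\textbf{Caveats and the main obstacle.} Two points need care. One is the cut locus: for $r$ possibly exceeding the injectivity radius, $\partial B(p,r)$ is a union of piecewise-smooth arcs rather than an embedded circle, so $L(\partial B(p,r))$ is really $\int_{U_p \cap \{|v|=r\}} f\,d\theta$ over the star-shaped domain $U_p$ from Section~\ref{real}, and the derivative picks up boundary contributions from the cut locus. Here monotonicity helps: as $r$ increases, the relevant angular domain $\{\theta : r < \rho(\theta)\}$ \emph{shrinks}, and each cut-locus contribution enters with a favorable sign, so the inequality $\frac{d}{dr}L(\partial B(p,r)) \le 2\pi - \int_{B(p,r)} K\,dv$ still holds (this is exactly the content of the standard proof of the Bishop–Gromov-type comparison in dimension two, or of Gauss–Bonnet with cut locus). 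I expect reconciling the cut-locus bookkeeping with the clean Jacobi-equation computation to be the main technical obstacle; the real-analytic perturbation and Lemma~\ref{cut} are precisely what make this rigorous. The other minor point is ensuring the conclusion produces a genuine $\de' < \de$ (not $\de'=\de$); this is automatic since the strict inequality $L(\partial B(p,r)) < (C/2)r$ would otherwise have to fail at some interior radius, which is where we extract our $\de'$.
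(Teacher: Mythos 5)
Your argument reaches the same Gauss--Bonnet-type inequality as the paper but packages it differentially: you derive
$\frac{d}{dr}L(\partial B(p,r)) \le 2\pi - \int_{B(p,r)} K\,dv$ and integrate once, whereas the paper applies Gauss--Bonnet directly to the star-shaped region $P=U_p\cap B(0,\delta')$ at a fixed radius, bounds the geodesic curvature of the spherical boundary arcs by sphere comparison ($\kappa\ge\frac{1}{2\delta}$), and then extracts $\delta'$ by averaging $l(s)=\mathrm{vol}(\partial B(p,s))$ over $[0,\delta]$. The two are equivalent: the geodesic curvature of $S(0,r)$ in $\exp_p^*g$ is $f_r/f$, so the Gauss--Bonnet term $\sum_j\int_{S_j}\kappa\,ds$ equals $\int_{\Theta(r)}f_r\,d\theta$, and since the cut-locus domain $\Theta(r)$ is non-increasing in $r$, $\frac{d}{dr}L(r)\le \int_{\Theta(r)}f_r\,d\theta$; your differential inequality is then a consequence of the paper's inequality (\ref{tta}). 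The one substantive thing your write-up defers is precisely the hardest technical point, namely that the cut-locus segments $L_i$ contribute zero net geodesic curvature and the interior angles of $\partial P$ are all nonnegative, so that Gauss--Bonnet on $P$ gives $\int_{B(p,r)}K\le 2\pi -\sum_j\int_{S_j}\kappa$. You correctly flag this as the main obstacle and cite it as standard; the paper proves it in Lemma~\ref{cutloc}, which occupies most of the section (and is also why the real-analytic perturbation and Myers' structure theorem for the cut locus are invoked). Your contrapositive delivers $\mathrm{vol}(B(p,\delta))<(C/4)\delta^2$, slightly stronger than needed, which is harmless. In short: same route, alternative bookkeeping, with the key cut-locus lemma left as a citation rather than proved.
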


\begin{proof}

 \vspace{2mm}
By making an arbitrarily small $C^2$ perturbation of $g$ we can
assume that it is real-analytic. Recall that $\frac {\pi}{3 \sqrt
{K_0}}$ is the uniform lower bound on the conjugate radius given
by Corollary \ref{geod}. Hence
$$ \tilde g : =exp^\ast(g)$$ is a Riemannian metric on the
(Euclidean) ball $B(0,\de)$ in $T_pM$. Also,
$$ exp_p: \ Int \ (U \cap B(0, \de)) \rt \ Int \ B(p,\de)$$
is a diffeomorphism, where $U=U_p$ as in Section \ref{real}. Note
that $U$ is star-shaped with respect to $0 \in T_pM$.

If we let $P= U \cap B(0,\de)$, then by Lemma \ref{cut} $\partial
P$ is a piecewise-smooth 1-manifold. In fact we can write

$$ \partial P = L_1 \cup ... \cup L_k \cup S_1 \cup...\cup
S_l,$$ where each $S_i$ is a smooth closed segment contained in
the circle $S(0,\de)$ and each $L_i$ is a smooth closed segment
contained in the cut-locus $\partial U$. We refer to the $L_i$ and
$S_i$ as {\it sides} of $P$ and the non-smooth points of $\partial
P$ as {\it vertices} of $P$. We can assume that a side intersects
another side in at most one point.

By changing $\de$ slightly, we can assume that vertices of $\del
P$ are either common points of a side $L_i$ and a side $S_j$ or
two sides $L_i$ and $L_j$.

By the definition of the cut locus, for each $L_i$ there is at
least one $L_j$, $j \neq i$, such that $exp_p(L_i)=exp_p(L_j)$.
For any such pair, $L_i \cap L_j = \phi$, since we know that
$exp_p$ is a local diffeomorphism on $B(0, \de)$.

\begin{lemma}\label{cutloc}
We have the following.
\begin{enumerate}
\item $ \sum_{i=1}^k \int_{L_i} \kappa =0$
\item The angle between any two consecutive sides of $P$ is
positive.
\end{enumerate}

\end{lemma}
\begin{proof}
As $\delta$ is less than the conjugacy radius, we have Gauss
normal coordinates on $P$ which we denote $r$ and $\theta$ as
usual, with the coordinates of a point $x$ denoted $r(x)$ and
$\theta(x)$.

Let $L_j$ be a segment as above parametrised by a function
$\alpha(s)$, with $\alpha(0)$ a point in the interior of $L_j$.
Then, as $L_j$ is in the boundary of $P$, there is a subsegment of
some segment $L_i$, parametrised by $\beta(s)$, so that
$\alpha(s)$ and $\beta(s)$ have the same image $c(s)$ under the
exponential map and $d(\alpha(s),p)=d(\beta(s),p)$, i.e.
$r(\alpha(s))=r(\beta(s))$, in a neighbourhood of $s=0$. The
images of the radial vectors joining $p$ to $\alpha(s)$ and
$\beta(s)$ form geodesics $\gamma_s$ and $\xi_s$ of the same
length joining $p$ to $c(s)$.

By differentiating $r(\alpha(s))=r(\beta(s))$ and considering images
in $F$, we see that the inner products of $\alpha'(0)$ and $\beta'(0)$
with the respective radial vectors are equal. As $\alpha(s)$ and
$\beta(s)$ have the same image on a neighbourhood of $0$, $\alpha'(0)$
and $\beta'(0)$ have the same norm. It follows that the angles made by
the vectors $\alpha'(0)$ and $\beta'(0)$ with the respective unit
radial vectors are the same. On passing to the image, we see that the
geodesics $\gamma_0$ and $\xi_0$ make the same angle with $c'(0)$ at
the point $c(0)$.

It follows that $\gamma_0$ and $\xi_0$ must approach $c(0)$ on
opposite sides -- otherwise they would have a common point and
direction and hence coincide. In particular, there are exactly two
points in $\del P$ that map to a smooth (even $C^1$) point on the
cut-locus, for if there were at least three points two would be on
the same side. It follows that the segments $L_j$ are identified
in pairs, with the interior of $P$ mapping to opposite sides of
the image.

We deduce that

$$ \sum_{i=1}^k \int_{L_i} \kappa =0,$$ where $\kappa$ denotes
geodesic curvature. This is because the terms that correspond to
$L_i$ and $L_j$ mapping to the same segment have equal magnitude
(as their image is equal) and opposite signs (as the interior of
$P$ maps to opposite sides of the image). 

We next see that the internal oriented angle $\theta_i$ between
any two consecutive closed segments in $U \cap B(0,\de)$ is
positive. Observe that as $P$ is a star convex region with
boundary piecewise smooth, we can parametrise $\del P$ by angle
using a function $r(\theta)$. This is smooth wherever $\del P$ is
smooth. The left and right derivatives $r'_\pm(\theta)$ exist at
all points. All internal oriented angles are positive if and only
if for every non-smooth point (i.e., vertex) $\theta$,
$r'_-(\theta)\geq r'_+(\theta)$.

Consider first the case when a vertex of $\del P$ between an edge of
the form $L_i$ and one of the form $S_j$. By construction, on $S_j$ we
have $r(\theta)=\delta$ and on $L_i$ we have $r(\theta)\leq
\delta$. It is immediate that $r'_-(\theta)\geq r'_+(\theta)$.

Next, consider a vertex $v$ between segments $L_{i_1}$ and
$L_{i_2}$, parametrised by $\alpha_1(s)$, $s\leq 0$ and
$\alpha_2(s)$, $s\geq 0$, respectively. By the above, there are
edges $L_{j_1}$ and $L_{j_2}$ that can be parametrised by curves
$\beta_1$ and $\beta_2$ with the images of $\alpha_i$ and
$\beta_i$ coinciding and $d(p,\alpha_i(s))=d(p,\beta_i(s))$.

We see that the curves $\alpha_i$ and $\beta_i$ can be extended so
that their domain of definition includes a neighbourhood of the
origin and with the images of $\alpha_i$ and $\beta_i$ coinciding
and $d(p,\alpha_i(s))=d(p,\beta_i(s))$. Without loss of
generality, we prove this for $\alpha=\alpha_1$ and
$\beta=\beta_1$. Namely, as $\delta$ is less than the conjugacy
radius, the exponential map gives diffeomorphisms from
neighbourhoods $U_\alpha$ and $U_\beta$ of $\alpha(0)$ and
$\beta(0)$ to a neighbourhood $V$ of their image $v$. The images
of the coordinate function $r$ under these diffeomorphisms gives
coordinate functions $r_\alpha$ and $r_\beta$. The condition
$d(p,\alpha_i(s))=d(p,\beta_i(s))$ is equivalent to
$r_\alpha=r_\beta$.

The gradients of the functions $r_\alpha$ and $r_\beta$ at $v$ are
unit vectors along the geodesic segments from $p$ to the vertex $v$
that are the images of the radial vectors to $\alpha(0)$ and
$\beta(0)$. As these geodesics do not coincide, the gradients do not
coincide and hence the gradient of $r_\alpha-r_\beta$ is non-zero. It
follows that the set $r_\alpha=r_\beta$ is a manifold near $v$. Taking
inverse images under the diffeomorphisms from $U_\alpha$ and $U_\beta$
to $V$ gives the required smooth extensions of $\alpha$ and $\beta$.

We now consider these extensions of $\alpha_1$ and $\alpha_2$. By
the definition of the cut-locus, it follows that for $s>0$,
$r(\alpha_2(s))\leq r(\alpha_1(s))$. As $\alpha_1(s)$ is smooth at
$0$, we deduce that $r'_-(\theta)\geq r'_+(\theta)$ if
$\theta=\alpha(0)$ is the given vertex. Thus, in this case too the
angles are positive. This completes the proof of the lemma.

\end{proof}
We now return to the proof of Lemma \ref{alp}.  Since $\de < \frac
{\pi}{3 \sqrt {K_0}} $, we can compare the Riemannian manifold
$(B(0, \de), \tilde g)$ with the round $2$-sphere of radius $\frac
{1}{\sqrt K_0}$ (the advantage of working with $B(0, \de)$ in
$T_pM$ rather than $B(p, \de)$ in $M$ is that the injectivity
radius of $B(0,\de)$ with the pull-back metric $\tilde g$ is
$\de$). If $\kappa$ denotes the mean-curvature function on $S(0,
\de)$, then

$$ \kappa \ \ \ge \ \ \sqrt K_0 \ \frac {\cos (\sqrt K_0 \de)} {\sin
(\sqrt K_0 \de)} \ \ \frac {1}{2 \de} \ \ \ \ {\rm on} \ \ \ \
S(0,\de).$$



Now we can apply the Gauss-Bonnet theorem to get

\begin{align}\notag
\int_{B(p,\de)}K \ =& \ \int_{U \cap B(0,\de)} \tilde K \\ \notag
                =& \ 2 \pi \chi (B(0,\de)) - \sum_{i=1}^k \int_{L_i} \kappa -
                \sum_{j=1}^l \int_{S_j} \kappa- \sum_{i=1}^k \theta_i \notag \\
                 \le & \ 2 \pi - \sum_{j=1}^l \int_{S_j} \kappa  \notag \\
                  \le & \ 2 \pi - \frac {l(\de)}{2 \de}, \label{tta}
\end{align}
where $\tilde K$ is the Gaussian curvature of $\tilde g$ and
$$l(\de) = \sum_{j=1}^l \ length (S_j).$$
Note that we have used the Lemma~\ref{cutloc} proved earlier in going from
line 2 to line 3 above. The area of $B(p,\de)$ is given by
\begin{align} \notag
vol(B(p,\de)) = vol (U \cap B(0,\de))= \int_0^\de l(s)ds, \notag
\end{align}
Hence if $vol(B(p,\de)) \ge C \de ^2$ for some $C$, then there exists $\de ' \le \de$ with
$$l(\de ') \ge C \de .$$
By (\ref{tta}), we would have for this $\de '$
$$ \int_{B(p, \de ')}K  \ \le \  2 \pi - \frac {l(\de ')}{2 \de '} \ \le \ 2 \pi - \frac {C}{2}.$$

This completes the proof of Lemma~\ref{alp}.

\end{proof}

\begin{corollary}\label{lower}
Let $(F,g)$ be a compact Riemannian 2-manifold with sectional curvature $K \le K_0$
and area $A$ satisfying $a_0 \le A \le A_0$.
Suppose that $S=\{x_1,...,x_l\}$ is a $\de$-net with
$\de< \frac {1}{\sqrt {2K_0}}$.

Then, for any $x_i \in S$, we have
$${\rm vol} \ B(x_i,\de) \le C_0 \de^2,$$
where
$$C_0= 8 \pi^2 (1- \chi(F)) + 4 \pi K_0A_0.$$

Hence it follows that the cardinality of $S$ is at least $\frac {a_0} {C_0} \de^{-2}$.
\end{corollary}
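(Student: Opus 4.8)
The plan is to reduce the volume bound to Lemma~\ref{alp} combined with Gauss--Bonnet, and then to read off the cardinality bound from the covering property of a net. Throughout I use that $\chi(F)\le 0$ for the surfaces under consideration (genus $\ge 1$), so $1-\chi(F)\ge 1$; this is also what makes $C_0$ positive. One point needs attention before starting: Lemma~\ref{alp} is only available for $\delta<\frac{1}{\sqrt{3K_0}}$, whereas here we assume the weaker $\delta<\frac{1}{\sqrt{2K_0}}$. So I would first dispose of the intermediate range $\frac{1}{\sqrt{3K_0}}\le\delta<\frac{1}{\sqrt{2K_0}}$ (which is empty when $K_0=0$) by a crude estimate: there $\delta^2\ge\frac{1}{3K_0}$, hence $C_0\delta^2\ge\frac{C_0}{3K_0}=\frac{8\pi^2(1-\chi(F))}{3K_0}+\frac{4\pi A_0}{3}\ge\frac{4\pi}{3}A_0>A_0\ge\mathrm{vol}\,B(x_i,\delta)$, so the asserted inequality holds trivially on that range.

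For the main range I would assume $\delta<\frac{1}{\sqrt{3K_0}}$ and argue by contradiction: suppose $\mathrm{vol}\,B(p,\delta)>C_0\delta^2$ for some $p\in F$ (I prove $\mathrm{vol}\,B(p,\delta)\le C_0\delta^2$ for every $p$, which in particular covers the net points). Applying Lemma~\ref{alp} with $C=C_0$ produces a radius $\delta'<\delta$ with $\int_{B(p,\delta')}K\,dv\le 2\pi-\frac{C_0}{2}$. On the other hand, Gauss--Bonnet gives $\int_F K\,dv=2\pi\chi(F)$, and since $K\le K_0$ and $\mathrm{vol}(F)=A\le A_0$,
\[
\int_{B(p,\delta')}K\,dv\;=\;2\pi\chi(F)-\int_{F\setminus B(p,\delta')}K\,dv\;\ge\;2\pi\chi(F)-K_0A_0.
\]
Combining the two estimates yields $2\pi\chi(F)-K_0A_0\le 2\pi-\frac{C_0}{2}$, i.e.\ $C_0\le 4\pi(1-\chi(F))+2K_0A_0$. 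Together with $C_0=8\pi^2(1-\chi(F))+4\pi K_0A_0$ this forces $4\pi(2\pi-1)(1-\chi(F))+2(2\pi-1)K_0A_0\le 0$, hence $2\pi(1-\chi(F))+K_0A_0\le 0$, which is impossible since $1-\chi(F)\ge 1$ and $K_0A_0\ge 0$. This contradiction establishes $\mathrm{vol}\,B(x_i,\delta)\le C_0\delta^2$ for every $i$.

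For the cardinality statement I would use that a $\delta$-net has the property that the balls $B(x_1,\delta),\dots,B(x_l,\delta)$ cover $F$ (every point of $F$ lies within $\delta$ of some $x_i$, by maximality). Hence $a_0\le A=\mathrm{vol}(F)\le\sum_{i=1}^l\mathrm{vol}\,B(x_i,\delta)\le l\,C_0\delta^2$, and therefore $l\ge\frac{a_0}{C_0}\delta^{-2}$.

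I expect the only real subtlety to be the bookkeeping with the constant $C_0$: one has to verify that the specific value $8\pi^2(1-\chi(F))+4\pi K_0A_0$ is simultaneously large enough to dominate $A_0$ on the range $\frac{1}{\sqrt{3K_0}}\le\delta<\frac{1}{\sqrt{2K_0}}$ left uncovered by Lemma~\ref{alp}, and large enough to close the contradiction in the main range once the Gauss--Bonnet lower bound $\int_{B(p,\delta')}K\,dv\ge 2\pi\chi(F)-K_0A_0$ is substituted into the conclusion of Lemma~\ref{alp}. Every other ingredient is a direct application of results already established.
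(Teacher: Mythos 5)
Your argument is correct and follows essentially the same route as the paper: assume $\mathrm{vol}\,B(p,\delta)>C_0\delta^2$, invoke Lemma~\ref{alp} to produce $\delta'<\delta$ with $\int_{B(p,\delta')}K\,dv\le 2\pi-\tfrac{C_0}{2}$, compare against the Gauss--Bonnet lower bound $\int_{B(p,\delta')}K\,dv\ge 2\pi\chi(F)-K_0A_0$, and derive $C_0\le 4\pi(1-\chi(F))+2K_0A_0$, which contradicts the definition of $C_0$. Your algebra closing that contradiction is right.

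You are, in fact, more careful than the paper in two places. First, you noticed that the hypothesis of the Corollary, $\delta<\frac{1}{\sqrt{2K_0}}$, does not quite match the hypothesis $\delta<\frac{1}{\sqrt{3K_0}}$ of Lemma~\ref{alp}, and you patched the intermediate range $\frac{1}{\sqrt{3K_0}}\le\delta<\frac{1}{\sqrt{2K_0}}$ by the trivial estimate $C_0\delta^2\ge\frac{4\pi}{3}A_0>A_0\ge\mathrm{vol}\,B(x_i,\delta)$; the paper's proof applies Lemma~\ref{alp} without addressing this mismatch. Second, you supplied the short covering argument (maximality of the net implies the balls $B(x_i,\delta)$ cover $F$, hence $a_0\le l\,C_0\delta^2$) that the paper leaves implicit. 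Incidentally, the paper's displayed bound ``$C_0<4\pi(1-\chi(F))+K_0A_0$'' appears to have dropped a factor of $2$ on $K_0A_0$ relative to what the preceding line actually yields; your version $C_0\le 4\pi(1-\chi(F))+2K_0A_0$ is the correct one, and it still contradicts the stated value of $C_0$, as you verified. The only caveat, which you flagged yourself, is the standing assumption $\chi(F)\le 0$; this is harmless in the context of the paper (genus $m>0$) but is needed for $C_0$ and the contradiction to make sense.
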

\begin{proof}

We will assume that at some point, say $x_1$,
$${\rm vol} \ B(x_1,\de) \ge C_0 \de^2$$
and get a contradiction.
The Gauss-Bonnet theorem applied to $F$ along with Lemma \ref{alp} gives
\begin{align} \notag
2 \pi \chi (F) & = \int_{B(x_1,\de ')}Kdv + \int_{F-B(x_1,\de ')}Kdv \\ \notag
               & \le 2\pi - \frac {C_0}{2} + K_0A_0. \\ \notag
\end{align}
This gives $$C_0< 4 \pi (1- \chi(F)) +  K_0A_0,$$ a contradiction.
\end{proof}

Let $F$ be a surface in $M$ with the given bounds on mean
curvature, genus and area bounded above by $A_0$. Then the
sectional curvature of $F$ is bounded above. We next see that
there is a lower bound $a_0$ on the area of $F$ depending only on
the geometry of $M$ and the given bounds on $F$. This allows us
to apply the above corollary uniformly.

\begin{lemma}
There is a constant $a_0$ depending only on the geometry of $M$
and the bound on the mean curvature of $F$ such that the area of
$F$ is at least $a_0$.
\end{lemma}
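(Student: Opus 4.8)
The plan is to show that a surface $F\subset M$ with mean curvature bounded by $H_0$ cannot have arbitrarily small area, using the monotonicity lemma (Theorem~\ref{monot}) we have already established. The key point is that $F$ is \emph{closed}, so it contains a full metric ball of positive radius, and monotonicity forces such a ball to have area bounded below by a constant depending only on $M$ and $H_0$.

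Concretely, let $\delta=\delta(M,g,H_0)$ and $c=c(M,g,H_0)$ be the constants from Theorem~\ref{monot}. Pick any point $p\in F$. Since $F$ is a closed surface, the ball $B(p,\delta)\subset F$ in the induced metric is well-defined, and since $\delta$ may be taken smaller without loss of generality, we may assume $\delta$ does not exceed the constant appearing in the monotonicity statement. Then Theorem~\ref{monot} gives
$$\mathrm{vol}(B(p,\delta))\ \ge\ c\,\delta^2.$$
Since $B(p,\delta)\subset F$, we conclude $\mathrm{Area}(F)\ge \mathrm{vol}(B(p,\delta))\ge c\,\delta^2$. Setting $a_0=c\,\delta^2$, which depends only on $(M,g)$ and $H_0$, gives the claimed lower bound.

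I do not expect a genuine obstacle here: the only subtlety is bookkeeping, namely making sure that the radius $\delta$ at which we invoke monotonicity is admissible (i.e., at most the threshold $\delta(M,g,H_0)$ of Theorem~\ref{monot}), which is automatic since that threshold is exactly what the lemma provides. One should also note that the argument uses only that $F$ is closed (hence complete and noncompact-free, so metric balls of any fixed small radius lie inside $F$) together with the mean curvature bound; the genus and area \emph{upper} bound play no role in producing the lower bound $a_0$. This lower bound, combined with Corollary~\ref{lower}, then yields the uniform lower bound on the cardinality of $\delta$-nets that is used in the analysis of the fractal dimension of the limiting metric $d$.
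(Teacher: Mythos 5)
Your proposal is correct and follows essentially the same route as the paper: the paper's one-line proof also simply invokes Theorem~\ref{monot} at a fixed admissible radius to lower-bound the area of a metric ball, hence of $F$; you have just spelled out the bookkeeping.
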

\begin{proof}
This follows from Theorem~\ref{monot} applied for an arbitrary value of $\epsilon$.
\end{proof}

We next show that there is a lower bound on the area of a ball of radius $\delta$, hence an upper bound on
the size of a $\delta$-net.

\begin{lemma}\label{upper}
 There is a constant $c>0$ such that for $\delta$ sufficiently small, the area of the ball of radius $\delta$
 in $F$ around a point $p\in F$ is at least  $c\delta^2$. As a consequence the size of a $\delta$-net is at most
 $A_0/c(\delta/2)^2$.
\end{lemma}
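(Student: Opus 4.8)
The plan is to obtain both assertions directly from the monotonicity lemma, Theorem~\ref{monot}. First I would note that the surface $F$ meets the hypotheses of that theorem: it is a compact surface in $M$ whose mean curvature satisfies $|H|\le H_0$. Applying Theorem~\ref{monot} to $F$ therefore produces constants $c=c(M,g,H_0)>0$ and $\delta_0=\delta_0(M,g,H_0)>0$ such that
$$\mathrm{vol}\bigl(B(p,\delta)\bigr)\ \ge\ c\,\delta^2$$
for every $p\in F$ and every $\delta\le\delta_0$. This is precisely the first statement of the lemma, with ``$\delta$ sufficiently small'' interpreted as $\delta\le\delta_0$ and with the constant $c$ the one furnished by Theorem~\ref{monot}.

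For the second statement, let $S\subset F$ be a $\delta$-net, i.e.\ a subset whose points are pairwise at distance at least $\delta$. Then the metric balls $B(x,\delta/2)$ for $x\in S$ are pairwise disjoint subsets of $F$. Assuming $\delta$ small enough that $\delta/2\le\delta_0$, the first part gives $\mathrm{vol}(B(x,\delta/2))\ge c(\delta/2)^2$ for each $x\in S$. Since these disjoint balls all lie inside $F$, whose area is at most $A_0$, summing their areas yields $|S|\cdot c(\delta/2)^2\le A_0$, hence $|S|\le A_0/\bigl(c(\delta/2)^2\bigr)$, as claimed.

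I do not anticipate a genuine obstacle here; the only care needed is the elementary bookkeeping that the radius appearing in each application of Theorem~\ref{monot} (namely $\delta$ in the first part and $\delta/2$ in the second) stays below the threshold $\delta_0$, which is exactly what ``$\delta$ sufficiently small'' encodes, together with the observation that maximality of the $\delta$-net plays no role in this direction — only the lower bound on pairwise distances is used, and that is what makes the half-radius balls disjoint.
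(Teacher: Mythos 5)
Your proposal is correct and follows essentially the same route as the paper: both obtain the lower area bound $\mathrm{vol}(B(p,\delta))\ge c\delta^2$ directly from Theorem~\ref{monot}, then cover the disjoint half-radius balls centered at a $\delta$-net and divide the total area bound $A_0$ by $c(\delta/2)^2$. The extra care you take to track the threshold $\delta_0$ and the observation that only the separation (not maximality) of the net is used are sound but were left implicit in the paper.
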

\begin{proof}
Again, by using Theorem ~\ref{monot}, we can deduce the bound on
the size of the $\delta$-net. Note that for a $\delta$-net $S$,
the balls of radius $\delta/2$ centered around the points of $S$
are disjoint. Hence their total area is at most the area of $F$,
which is in turn at most $A_0$. As the area of each of these balls
is at least $c(\delta/2)^2$, it follows that the cardinality of
$S$ is at most $A_0/c(\delta/2)^2$.
\end{proof}

We conclude that the size of a $\delta$-net for the metric $d$ grows as $\delta^{-2}$ as $\delta\to 0$.

\begin{theorem}\label{fracdim}
There are constants $0<b<B<\infty$ such that, for $\delta$
sufficiently small, the size of a $\delta$-net $S$ for the
pseudometric $d$ satisfies
$$b\delta^{-2}\leq\vert S\vert\leq B\delta^{-2}$$
\end{theorem}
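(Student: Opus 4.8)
The plan is to transfer the two-sided bounds on $\delta$-net sizes that we already have for the individual surfaces $F_j$ with their induced metrics $d_j$ --- the lower bound from Corollary~\ref{lower} and the upper bound from Lemma~\ref{upper} (or equivalently the monotonicity Theorem~\ref{monot}) --- to the limiting pseudo-metric $d$, using only the uniform convergence $|d_j-d|\to 0$. Recall that a $\delta$-net $S$ for $d$ is, by definition, a maximal $\delta$-separated subset of $F$, and that maximality is equivalent to $\delta$-density: every point of $F$ is within $d$-distance $\delta$ of some point of $S$. Write $K_0$ for the uniform curvature bound of Lemma~\ref{up}, $a_0$ for the uniform lower area bound, $A_0$ for the area bound, $c$ for the monotonicity constant of Lemma~\ref{upper}, and $C_0$ for the constant of Corollary~\ref{lower}; none of these depends on $\delta$. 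Note that each $d_j$ is a genuine Riemannian distance function (the maps $i_j\circ\varphi_j^{-1}$ are embeddings) which is isometric via $\varphi_j$ to the induced metric on $F_j$, so all of the cited results apply directly to $(F,d_j)$.

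I would then argue as follows. Fix $\delta>0$ small enough that $\delta/4$ lies below the threshold $\delta(M,g,H_0)$ of Theorem~\ref{monot} and $2\delta<1/\sqrt{2K_0}$, let $S$ be an arbitrary $\delta$-net for $d$, and choose $j$ (depending on $\delta$, but not on $S$) so large that $|d_j-d|<\delta/2$. Then distinct points of $S$ are at $d_j$-distance more than $\delta/2$, and every point of $F$ is within $d_j$-distance less than $2\delta$ of $S$. For the upper bound, the $d_j$-balls $B_{d_j}(x,\delta/4)$, $x\in S$, are pairwise disjoint, so by the monotonicity lemma each has $g_j$-area at least $c(\delta/4)^2$; since the total area is at most $A_0$ this yields $|S|\le 16A_0c^{-1}\delta^{-2}$. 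For the lower bound, the balls $B_{d_j}(x,2\delta)$, $x\in S$, cover $F$, and the proof of Corollary~\ref{lower} shows that for $\delta$ small every such ball has $g_j$-area at most $C_0(2\delta)^2=4C_0\delta^2$ (this is exactly the estimate whose failure produces the contradiction there, so it holds unconditionally once $\delta$ is small); since the area of $F$ is at least $a_0$ this yields $|S|\ge a_0(4C_0)^{-1}\delta^{-2}$. Hence $b:=a_0/(4C_0)$ and $B:=16A_0/c$ work, and replacing $b$ by $b/2$ if necessary makes the inequalities strict.

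The substance of the proof is entirely in the area estimates already in hand. The one point that will need care is the comparison step itself: keeping the inequalities pointing the right way when passing between $d$ and $d_j$ (a $\delta$-separated set for $d$ is only $(\delta/2)$-separated for $d_j$, and a $\delta$-dense set for $d$ is only $(3\delta/2)$-dense for $d_j$), together with the observation that the index $j$ must be allowed to depend on $\delta$ --- which is harmless, since the final constants $b$ and $B$ do not depend on $j$ or on $\delta$.
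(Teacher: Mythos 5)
Your proposal is correct and is, in substance, the paper's own argument: fix $j$ with $\abs{d_j-d}<\delta/2$, transfer $\delta$-separation and $\delta$-density from $d$ to $d_j$, and then invoke the two-sided area estimates for $g_j$-balls (monotonicity for the lower area bound, Gauss--Bonnet via Lemma~\ref{alp} for the upper area bound). The only organizational difference is in the lower bound on $\abs{S}$: the paper quotes Corollary~\ref{lower} as a statement about net cardinalities, so it first introduces an auxiliary $3\delta$-net $T$ for $d_j$ and an injective map $T\to S$, then bounds $\abs{T}$ from below; you instead observe that the per-ball volume estimate $\operatorname{vol}_{g_j} B(p,\rho)\le C_0\rho^2$ extracted from the proof of Corollary~\ref{lower} holds for \emph{every} center $p$ (it needs no net hypothesis), and apply a direct covering argument with the balls $B_{d_j}(x,2\delta)$, $x\in S$. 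This bypasses the auxiliary net and the injectivity lemma entirely and is a genuine, if small, simplification; it is worth noting explicitly that this relies on reading the unconditional volume bound out of the proof of Corollary~\ref{lower} rather than its statement.
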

\begin{proof}
Suppose $\delta>0$ is sufficiently small and $S$ is a $\delta$-net for $d$, i.e., a maximal set so that all pairwise distances are at least $\delta$. Let $j$ be such that $\vert d_j-d\vert<\delta/2$. Then for $p,q\in S$, $p\neq q$, we have $d_j(p,q)\geq \delta/2$. Hence $S$ is contained in a $\delta/2$-net $S'$ for the metric $d_j$. But Lemma~\ref{upper} gives an upper bound of the form $B\delta^{-2}$ for the cardinality of $S'$, and hence of $S$.

Next, let $T$ be a $3\delta$-net for the metric $d_j$. We claim that the cardinality of $T$ is at most that of $S$. First observe that as $S$ is a $\delta$-net, if $x\in F$ then for at least one $p=p(x)$ in $S$, $d(x,p)\leq \delta$. If $x\in S$, this is obvious, otherwise be considering $S\cup \{x\}$ we get a contradiction to the masimality. As  $\vert d_j-d\vert<\delta/2$, it follows that $d_j(x,p(x))<3\delta/2$.

For each point $q\in T$, choose and fix $p(q)$ as above. This gives a function $p:T\to S$.

\begin{lemma}
$p:T\to S$ is injective.
\end{lemma}
\begin{proof}
Suppose $p(q)=p(q')=p$. Then we have seen that $d_j(q,p)\leq 3\delta/2$ and $d_j(q',p)<3\delta/2$. By the triangle inequality, $d_j(q,q')<3\delta$, contradicting the hypothesis that $T$ is a $3\delta$-net for the metric $d_j$.
\end{proof}
It is immediate that the cardinalities of $S$ and $T$ satisfy $\vert T\vert\leq \vert S\vert$. But Corollary~\ref{lower} gives a lower bound of the form $b\delta^{-2}$ on $\vert T\vert$, hence on the cardinality of $S$.

\end{proof}

A coarse notion of area (and volume), and the corresponding notion of dimension, the so called \emph{fractal dimension}, can be defined in terms of $\delta$-nets.
Namely, let $(X,d)$ is a metric space. For $\delta>0$, let $n(\delta)$ be the minimum number of balls of radius $\delta$ that cover $X$. For $s.0$, define the $s$-dimensional volume by
$$V^s(X)=\limsup_{\delta\to 0} n(\delta)\delta^s$$

It is an immediate consequence of Theorem~\ref{fracdim} that the
$2$-dimensional volume, in the above sense, of $F$ with the
metric $d$ is a finite, positive number. Further for $s<2$ the
$s$-dimensional volume is zero and for $s>2$ it is infinite.
Thus, the limiting metric on the surface is a
metric of fractal dimension two and of finite, positive $2$-dimensional volume.

The fractal dimension is closely related to, but not equal to, the
Hausdorff dimension. In particular, it is a \emph{capacity} rather
than a measure - we have finite but not countable additivity. For
example, if $X$ is the set $\mathbb{Q}\cup [0,1]$ if rational numbers
in $[0,1]$, then $V^1(X)=1$ but the $1$-dimensional Hausdorff measure
of $X$ is zero.

It is easy to deduce from the above that the Hausdorff dimension of $F$ is at most $2$. However, we do not know whether the Hausdorff dimension must be two. We remark that the metric $d_j$ is not in general bilipshitz to the pseudometric $d$ as for pairs of distinct points $p$, $q$ in a circle $\alpha^i$ in $F$ as above (if there is at least one such circle), $d(p,q)=0$ but $d_j(p,q)\neq 0$.

\section{Appendix A: Bounds on curvature and conjugate radius}\label{S:bdcurv}

\begin{lem}[see Lemma~\ref{up}]
Let $F$ be an embedded surface in a Riemannian $n$-manifold
$(M,g)$ with mean curvature bounded above by $H_0$. There is a
constant $ K_0=K_0(M,g,H_0) $ so that the sectional curvature of $
F$ is bounded above by $ K_0 $.
\end{lem}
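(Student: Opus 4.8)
The plan is to bound the sectional curvature of $F$ in terms of the ambient geometry and the mean curvature bound $H_0$, using the Gauss equation. For a surface $F$ immersed in $(M,g)$, the intrinsic Gaussian curvature $K_F$ at a point is related to the ambient sectional curvature $K_M(T_pF)$ of the tangent plane $T_pF$ and the second fundamental form $\mathrm{II}$ by the Gauss equation
$$K_F = K_M(T_pF) + \det(\mathrm{II}) = K_M(T_pF) + \kappa_1\kappa_2,$$
where $\kappa_1,\kappa_2$ are the principal curvatures. Since $M$ is closed, its sectional curvature is bounded above by some constant $K_M^{\max} = K_M^{\max}(M,g)$, so the first term is controlled. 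The issue is the term $\kappa_1\kappa_2$: the mean curvature bound only controls $|\kappa_1+\kappa_2| \le 2H_0$ (with the normalisation $H = \tfrac12(\kappa_1+\kappa_2)$, or $|\kappa_1+\kappa_2|\le H_0$ depending on convention), not the individual principal curvatures, and $\kappa_1\kappa_2$ can be very negative if the surface is nearly a saddle with large opposite principal curvatures.

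The key observation is that a negative $\kappa_1\kappa_2$ only \emph{helps}, since we want an \emph{upper} bound on $K_F$. So first I would write $\kappa_1\kappa_2 \le \tfrac14(\kappa_1+\kappa_2)^2$ (AM–GM, valid since $(\kappa_1-\kappa_2)^2 \ge 0$), which gives $\kappa_1\kappa_2 \le H_0^2$ (up to the normalisation constant). Combining with the ambient bound yields
$$K_F \le K_M^{\max}(M,g) + H_0^2 =: K_0,$$
a constant depending only on $(M,g)$ and $H_0$, as required. Since $F$ is a surface, its sectional curvature equals its Gaussian curvature $K_F$, so this completes the argument.

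The only real subtlety — not an obstacle, but a point to state carefully — is the sign convention for mean curvature and second fundamental form, and the fact that in codimension greater than one the ``second fundamental form'' is vector-valued: one should take an orthonormal frame $\{\nu_\alpha\}$ of the normal bundle, write $\mathrm{II} = \sum_\alpha \mathrm{II}^\alpha \nu_\alpha$, and note that the Gauss equation contributes $\sum_\alpha \det(\mathrm{II}^\alpha)$, each of which is $\le \tfrac14(\mathrm{tr}\,\mathrm{II}^\alpha)^2$, while $\sum_\alpha(\mathrm{tr}\,\mathrm{II}^\alpha)^2 = |\vec H|^2 \cdot(\text{const})$ is exactly the squared norm of the mean curvature vector, which is bounded by $H_0^2$. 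So the same estimate goes through verbatim in arbitrary codimension. I expect the bookkeeping of these conventions to be the most delicate part, but there is no genuine difficulty.
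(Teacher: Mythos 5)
Your argument is exactly the paper's proof: the Gauss--Codazzi equation to express $K_F$ as the ambient sectional curvature of $T_pF$ plus $\sum_\alpha \det(\mathrm{II}^\alpha)$, the AM--GM inequality $\det(\mathrm{II}^\alpha)\le\frac14(\mathrm{tr}\,\mathrm{II}^\alpha)^2$ to bound each determinant by the squared trace, compactness of $M$ for the ambient bound, and the codimension-$(n-2)$ bookkeeping with an orthonormal normal frame. Correct and the same route.
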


\begin{proof}
Let
$\tilde \nabla$ and $\nabla$ denote the Riemannian connections of
$M$ and $F$. Fix $p \in F$ and let $N_1,..,N_{n-2}$ be unit
normal vector fields defined in a neighbourhood of $p$. Then the
second fundamental form $B$ is given by
$$B(X,Y):= \widetilde \nabla_XY - \nabla_XY $$
and can be written as
$$B(X,Y) \ = \ \sum_{i=1}^{n-2} \langle B_i(X),Y)N_i \rangle \ \ \ \ \ X,Y \in T_pF$$
where the symmetric linear operators $B_i:T_pF \rt T_pF$ are given
by $B_i(X) = - (\widetilde \nabla _X N_i)^T$. The mean curvature
field is given by
$$H \ = \ \sum_{i=1}^{n-2}Tr(B_i)N_i .$$

If $k_a$ is the sectional curvature of $M$ along the tangent plane
$T_pF$ then, by the Gauss-Codazzi formula, the sectional curvature
of $F$ at $p$ is given by
$$k=k_a+ \sum_{i=1}^{n-2} Det(B_i).$$

Fix $i$ for now. Let $\kappa_1$ and $\kappa_2$ denote the
eigenvalues of $B_i$. Since
$$\vert H \vert^2 = \sum_{i=1}^{n-2}Tr(B_i)^2,$$
we have $ \vert \kappa_1 + \kappa_2 \vert \le
\vert H \vert \le H_0 $.

Hence
$$Det(B_i)=\kappa_1\kappa_2=\frac{(\kappa_1+\kappa_2)^2-(\kappa_1-\kappa_2)^2}{4}
\leq\frac{H_0^2}{4}.$$ Since $M$ is compact, there is an upper
bound $K_M$ on $k_a$. It follows that the sectional curvature of $F$  is
bounded above by $ K_0=K_M+(n-2)H_0^2/4$.

\end{proof}

\begin{lem}[see Lemma~\ref{geod}]
Let $(F,g)$ be a complete Riemannian 2-manifold with sectional
curvature bounded above by $K_0$. Then the conjugate radius at any
$p \in F$ is at least $R:= \frac {\pi}{3 \sqrt {K_0}}$. Moreover,
if we write
$$exp^\ast(g)=dr^2+f^2(r,\theta)d\theta^2$$
for polar coordinates $(r, \theta)$ on $T_pF$ and $r < R$, then
$f(r,\theta)$ increasing as a function of \ $r$ and
$f(r,\theta)>r/2$ for all $\theta$.
\end{lem}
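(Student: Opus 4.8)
The plan is the classical Jacobi field comparison. Fix $p\in F$ and use geodesic polar coordinates $(r,\theta)$ on $T_pF$, so that $\exp_p^{\ast}g=dr^{2}+f(r,\theta)^{2}\,d\theta^{2}$. Along each radial geodesic $\theta=\mathrm{const}$ the function $f(\cdot,\theta)$ satisfies the Jacobi equation $\partial_r^{2}f+K\,f=0$ with $f(0,\theta)=0$ and $\partial_r f(0,\theta)=1$, where $K=K(\exp_p(rv_\theta))\le K_0$ is the Gaussian curvature. Since $d(\exp_p)$ is singular at $rv_\theta$ (for $r>0$) exactly when $f(r,\theta)=0$, the conjugate radius at $p$ equals $\inf_\theta\{\,r>0:f(r,\theta)=0\,\}$. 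I would compare with the constant curvature model: let $\phi(r)=\frac{1}{\sqrt{K_0}}\sin(\sqrt{K_0}\,r)$, which solves $\phi''+K_0\phi=0$, $\phi(0)=0$, $\phi'(0)=1$, with $\phi>0$ on $(0,\pi/\sqrt{K_0})$ and $\phi'(r)=\cos(\sqrt{K_0}\,r)$.

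The single estimate from which everything follows is that $f(\cdot,\theta)/\phi$ is non-decreasing on $(0,\pi/\sqrt{K_0})$. To see this, set $w=f\phi'-(\partial_r f)\phi$; then $w'=f\phi''-(\partial_r^{2}f)\phi=(K-K_0)f\phi$ and $w(0)=0$. As long as $f>0$ and $\phi>0$ one has $w'\le 0$, hence $w\le 0$, hence $(f/\phi)'=-w/\phi^{2}\ge 0$; since $f/\phi\to 1$ as $r\to 0^{+}$ this forces $f\ge\phi>0$. Because $f$ cannot vanish while $f\ge\phi>0$, a continuation argument propagates the inequality over all of $(0,\pi/\sqrt{K_0})$, and uniformly in $\theta$ as the curvature bound is pointwise. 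In particular the conjugate radius at $p$ is at least $\pi/\sqrt{K_0}>\pi/(3\sqrt{K_0})=R$.

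It then remains to read off the two stated properties of $f$ on $r<R$. For monotonicity, write $f=(f/\phi)\,\phi$, so $\partial_r f=(f/\phi)'\,\phi+(f/\phi)\,\phi'$; for $r<\pi/(2\sqrt{K_0})$ we have $\phi>0$, $\phi'=\cos(\sqrt{K_0}\,r)>0$, and $(f/\phi)'\ge 0$, $f/\phi\ge 1$ from the previous step, whence $\partial_r f\ge\phi'>0$; and $R<\pi/(2\sqrt{K_0})$. For the lower bound, since $f\ge\phi$ it suffices that $\frac{1}{\sqrt{K_0}}\sin(\sqrt{K_0}\,r)>r/2$ for $0<r\le R$, i.e. $\sin t>t/2$ for $t=\sqrt{K_0}\,r\in(0,\pi/3]$; this holds because $t\mapsto\sin t/t$ is decreasing on $(0,\pi)$ and at $t=\pi/3$ equals $\frac{3\sqrt{3}}{2\pi}>\frac12$ (as $3\sqrt{3}>\pi$).

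I expect the only delicate point to be the behaviour at $r=0$: the ratios $f/\phi$ and $w/\phi^{2}$ are a priori of the form $0/0$ there, so one must use the Taylor expansions $f(r,\theta)=r+O(r^{3})$ and $\phi(r)=r+O(r^{3})$ — both forced by the Jacobi equation and the initial data — to conclude $f/\phi\to 1$ and $w=O(r^{3})$, which is exactly what lets the monotonicity of $f/\phi$ integrate up to the inequality $f\ge\phi$ including the endpoint $r=0$. Everything else is routine ODE comparison together with the elementary trigonometric inequality above.
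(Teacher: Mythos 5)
Your proof is correct and follows essentially the same route as the paper's: both compare the Jacobi field $f$ with the constant-curvature solution $\phi(r)=\frac{1}{\sqrt{K_0}}\sin(\sqrt{K_0}\,r)$ via a Wronskian-type argument (your quantity $w=f\phi'-(\partial_r f)\phi$ is, up to a constant, exactly the expression the paper denotes in its displayed inequality (\ref{eh})), leading to $f\ge\phi$ and then the same elementary trigonometric bound $\sin t>t/2$ on $(0,\pi/3]$. The only cosmetic difference is that you package the comparison as monotonicity of $f/\phi$ and are explicit about the $0/0$ limit at $r=0$, whereas the paper integrates the differential inequality over $[t,r]$ and lets $t\to 0$ at the end.
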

\begin{proof}
Fix polar coordinates $(r, \theta)$ on $T_pF$. $exp$ will denote
$exp_p$. We know that
$$exp_\ast \Bigl ( \frac {\partial}{\partial r} \Bigr ) = \frac {\partial}{\partial
r}.$$ Let $$J_\theta (r) = exp_\ast \vert_{(r,\theta)} \Bigr (
\frac {\partial}{\partial \theta} \Bigr ), \hspace{1cm} f(r,
\theta)= \Vert J_\theta(r) \Vert .$$ Note that

$$exp^\ast(g)=dr^2+f^2(r,\theta)d\theta^2.$$
Fix $\theta >0$ and regard $f$ as a function $r$ alone. Let $T>0$ be the smallest value of $T$ such that $J_\theta(T)=0$. Then $f$ is
smooth on $[0,T)$. Assume that $r \in (0,T]$.
Since $J$ is a Jacobi field, we have

$$K(r, \theta){f(r,\theta)} \ = \ -  \frac { \partial^2
f (r, \theta)}{ \partial r^2 },$$ where $K$ denotes the Gaussian
curvature of $F$. Therefore
$$ f'' + K_0f \ge 0.$$
 The above inequality combined with
$$ f(0)=0, \ \ f'(0)=1 \ \ {\rm and} \ \ f \ge 0$$
implies that
\begin{equation}\label{eh}
 \sin (\sqrt {K_0} r)f '(r)- \sqrt {K_0} \cos
(\sqrt {K_0} r)f (r) \ge 0
\end{equation}
for
$$ r \in  I= \Bigl [0,\ \frac {\pi}{2 \sqrt K_0} \Bigr ].$$ Hence $$f'(r) \ge \sqrt
{K_0} \cot (\sqrt {K_0} r)f (r) \ge 0$$ on $I$. This implies that
$f>0$ on $I$, since $f$ is non-decreasing on $I$ and $f(r)=0$ if
and only if $f'=0$ on $(0,r)$, which would contradict $f'(0)=1$.

Integrating (\ref{eh}), we get
$$f(r) \ge \frac {f(t)}{\sin(\sqrt K_0 t)}{\sin(\sqrt K_0 r)}$$
for $0 < t< r < \frac {\pi}{2 \sqrt K_0}$.

Letting $t \rightarrow 0$,
$$f(r) \ge \frac {1}{\sqrt K_0} \sin(\sqrt K_0 r) > \frac {r}{2}$$
for $ 0<r < \frac {\pi}{3 \sqrt K_0}$.
\\
Hence we can take $R = \frac {\pi}{3 \sqrt K_0}$.

\end{proof}

\section{Appendix B: Lifting discs under the exponential map}\label{S:lift}

In this section, we prove Lemma~\ref{explift} which allows us to lift discs to the tangent space.

\begin{lem}[see Lemma~\ref{explift}]
Let $\iota:B\to (F,g)$ be an immersion of a disc into a complete
Riemannian $2$-manifold $(F,g)$ with sectional curvature bounded
above by $K_0$. Suppose  that for the pullback metric $i^*g$, the
length of  $\gamma=\del B$ and the distance of a point in $B$
to $\gamma$ are both bounded above by $\epsilon<R/10$ where $R =
\frac {\pi}{3 \sqrt {K_0}}$. Then for $x=\iota(y)$ in the image of
$B$, there is a lift $\tilde \iota$ of $\iota$ to the tangent space $T_x
F$ so that $\iota=exp_x\circ\tilde \iota$. Furthermore, the lift can be
chosen so that $\tilde \iota(y)$ is the origin.
\end{lem}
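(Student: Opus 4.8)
The plan is to construct the lift $\tilde\iota$ by continuation along paths in $B$, starting from the prescribed value $\tilde\iota(y)=0\in T_xF$, and to show this is well-defined by a monodromy argument. More precisely, first I would fix $x=\iota(y)$ and note that since $\epsilon<R/10<R$, Lemma~\ref{geod} guarantees $\exp_x$ is an immersion on the Euclidean ball $B(0,R)\subset T_xF$; hence $(B(0,R),\exp_x^\ast g)$ is an honest Riemannian disc and $\exp_x$ is a local isometry from it to $F$. For any path $c:[0,1]\to B$ with $c(0)=y$, the curve $\iota\circ c$ in $F$ has a unique lift through $\exp_x$ starting at $0$, \emph{provided} the lift stays inside $B(0,R)$ where $\exp_x$ is a local diffeomorphism. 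The content of the lemma is that (a) these lifts never escape $B(0,R)$, and (b) the endpoint of the lift depends only on the endpoint of $c$, not on $c$ itself. Given (a) and (b), defining $\tilde\iota(z)$ to be the common endpoint of the lift of $\iota\circ c$ for any path $c$ from $y$ to $z$ produces a map with $\exp_x\circ\tilde\iota=\iota$ and $\tilde\iota(y)=0$, and smoothness is automatic since locally $\tilde\iota=(\exp_x)^{-1}\circ\iota$.

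For the non-escape claim (a), I would use the length hypotheses together with the lower bound $f(r,\theta)>r/2$ from Lemma~\ref{geod}: in the polar coordinates $\exp_x^\ast g=dr^2+f^2\,d\theta^2$, a curve in $B(0,R)$ whose $g$-length is $\ell$ cannot reach radius larger than $2\ell$ away from a point already at radius $r_0$ is a slight overstatement, so more carefully: the radial coordinate $r$ is $1$-Lipschitz for the metric $\exp_x^\ast g$ (since $|dr|=1$), so the lift of any curve of length $\le\ell$ starting at the origin stays within the Euclidean ball of radius $\ell$. Now any point $z\in B$ can be joined to $y$ by first going from $y$ along the boundary-avoiding... actually the cleanest route: pick $z\in B$; by hypothesis there is a point $w\in\gamma=\partial B$ with $d_{i^\ast g}(z,w)\le\epsilon$, and $\gamma$ has total length $\le\epsilon$, and $y$ is within $\epsilon$ of $\gamma$ as well (it is either on $\gamma$ or within $\epsilon$ of it by the diameter bound on the distance to $\gamma$). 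Concatenating a path $y\to$(point on $\gamma$), an arc along $\gamma$, and a path (point on $\gamma$)$\to z$ gives a path in $B$ from $y$ to $z$ of $i^\ast g$-length at most $3\epsilon<R/3<R$. Hence its lift stays in $B(0,3\epsilon)\subset B(0,R)$, proving (a) — and simultaneously showing the lifts are defined for all of $B$.

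For the monodromy claim (b), I would argue that since $B$ is a disc, hence simply connected, any two paths from $y$ to $z$ are homotopic rel endpoints through paths staying in $B$; and along such a homotopy the lift through the local diffeomorphism $\exp_x|_{B(0,R)}$ varies continuously, so the endpoint is locally constant in the homotopy parameter, hence constant — the only subtlety being to keep the lifted homotopy inside $B(0,R)$, which follows from the same length estimate applied uniformly: every path in the homotopy can be taken of length at most $3\epsilon$ (e.g.\ restrict attention to a subdisc of $B$ containing $y$ and $z$, or simply note that $B(0,R)$ is geodesically convex enough — but the safe statement is that the homotopy can be chosen through paths of bounded length since $B$ retracts onto such a family). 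The main obstacle I expect is precisely this uniform length control during the monodromy step: one must ensure the \emph{entire} one-parameter family of lifted paths remains in the region $B(0,R)$ where $\exp_x$ is invertible, rather than just the two endpoints' lifts. I would handle this by working with a nested exhaustion argument — lift on the $3\epsilon$-neighbourhood of $y$ in $B$ first, where convexity-type estimates from Lemma~\ref{geod} (monotonicity of $f$) are clean, then observe that by the distance-to-boundary and perimeter bounds this neighbourhood is already all of $B$.
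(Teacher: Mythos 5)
Your proposal shares the paper's core ingredients: Lemma~\ref{geod} makes $\exp_x$ a local isometry on $B(0,R)$, the radial coordinate $r$ is $1$-Lipschitz so a continuation lift of a length-$\ell$ curve starting at the origin stays in $B(0,\ell)$, and the perimeter and distance-to-$\gamma$ hypotheses give a path of $i^\ast g$-length $\le 3\epsilon$ from $y$ to any $z\in B$. Up to that point you are on sound footing and close to the paper. The divergence is in how the pointwise lifts are assembled into a single map $\tilde\iota$, and there your argument has a genuine gap at exactly the step you flag.

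Your monodromy claim --- that two length-$\le 3\epsilon$ paths from $y$ to $z$ in $B$ are homotopic rel endpoints through paths of uniformly bounded $i^\ast g$-length, so the lifted homotopy never leaves $B(0,R)$ --- is asserted, not established, and it is the entire difficulty. The straight-line homotopy in disc coordinates gives no control whatever on the $i^\ast g$-length of intermediate paths; $\exp_x\vert_{B(0,R)}$ is only a local diffeomorphism, not a covering of its image, so unconditional homotopy lifting is unavailable; and your closing remarks ("$B$ retracts onto such a family", "the $3\epsilon$-neighbourhood of $y$... is already all of $B$") restate the needed conclusion rather than prove it. The paper's Appendix~B never invokes homotopy lifting for precisely this reason. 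Instead, it does Morse theory on the distance-to-$\gamma$ function to partition $B$ into small cells $b_i$ and chooses an ordering so that each $B_j=\cup_{i\le j}b_i$ satisfies: (i) $B_j\cap b_{j+1}$ is connected and nonempty, and (ii) every point of $B_j$ joins to $\del B\cap B_j$ by a path of length $\le 2\epsilon$ \emph{inside} $B_j$. Property (ii) keeps the partial lift in $B(0,8\epsilon)$; property (i) makes the extension over $b_{j+1}$ unambiguous (two local inverses of $\exp_x$ agreeing at one point of a connected overlap agree throughout) and, by Van Kampen, keeps $B_j$ simply connected at every stage, so no monodromy question ever arises. That inductive mechanism is the content you are missing. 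If you insist on the homotopy route, the lemma you must actually prove --- any two paths of length $\le 3\epsilon$ from $y$ to $z$ in $B$ are homotopic rel endpoints through paths of length $\le C\epsilon$ for a universal $C$ --- is, once you try to prove it from the perimeter and distance-to-$\gamma$ bounds alone, essentially the cell decomposition the paper constructs.
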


\begin{proof}
Recall (Lemma~\ref{geod}) that $exp$ is an immersion on a ball of
uniform radius $R$ in $T_pF$ for any $p\in F$. Hence $exp_x^\ast (g_j)$ is a metric
on this ball and the injectivity radius at $0$ is at least $R$. The tangent space $T_x F$ is a vector
space with an inner product with origin identified with $x$. We
seek lifts with respect to the exponential map $exp_x:T_x F\to
F$.


By the inverse function
theorem, Lemma~\ref{geod} yields the following.

\begin{lemma}\label{smalldisc}
There is a constant $\delta>0$ such that given any point $\xi=exp_x(z)$,
with $z\in B(0,9\epsilon)\subset T_x F$, there is a map
$\exp_z^{-1}:B(\xi,\delta)\to T_x F$ with $exp_x\circ exp_\xi^{-1}$ the
identity map and $exp_z^{-1}(\xi)=z$.
\end{lemma}

Thus, the exponential map is invertible on sets of diameter less than
$\delta$ containing a point in the image of $B(0,9\epsilon)$. Note
that $\delta$ is not universal. However, none of the constants in the
Schwarz lemma depend on $\delta$.  Observe that as the metric on the domain $B$ of $i$ is the pullback metric, a set of diameter at most $\delta$ in $B$ has image of diameter at most $\delta$.

We shall construct a lift on the disc $B$ by inductively
lifting sets $b_i$ of small diameter, as in the proof of the
homotopy lifting theorem in Algebraic Topology. However, we need
to ensure that at each stage the lift remains within
$B(0,9\epsilon)$ to continue the process. In our situation we can
indeed choose such sets $b_i$ using a geometric argument making
use of the fact that $\del B(\gamma)$ has length less than
$\epsilon$.

\begin{lemma}
There is a sequence of smooth balls $b_i$, each of which has diameter
at most $\epsilon$, so that if $B_j=\cup_{i=1}^j b_i$ and $B_0$ is a
single point $B_0=\{y_0\}$,
\begin{enumerate}
\item The set $B_j\cap b_{j+1}$ is connected and non-empty for $j\geq
0$.
\item The set $B_j\cap \del B$ is connected and non-empty for
  each j.
\item For each point $p\in B_j$, there is a path $\alpha$ contained in
  $B_j$ of length at most $2\epsilon$ joining $p$ to $B_j\cap \del
  B$.

\end{enumerate}
\end{lemma}
\begin{proof}
We shall first construct discs $b_i$ with diameter at most $\delta$
and then re-order them to satisfy the condition of the lemma.

Consider the function $f:B\to\R$ given by the distance to
the boundary. This is positive on the interior of $B$ and
vanishes on the boundary. After a small perturbation, we can
assume that this is Morse. Clearly the function $f$ has no local
minima in the interior of $B$.

Thus, $f$ has finitely many critical points of index one and of index
$2$. As there are no local minima, the descending manifold of each
critical points of index $1$ is a pair of arcs joining the critical
point to the boundary. These partition the disc $B$ into
closed subdiscs which we call \emph{basins}. Each of these basins $P$
is the closure of the descending submanifold (i.e., the basin of
repulsion of a critical point of index $2$ which we denote $O(P)$. We
regard the gradient lines from the local maximum in a subdisc as
radial lines (see figure~\ref{poly}).


Consider now one such basin $P$. This is a polygon with $2k$ sides for
some $k\geq 0$, with alternate sides contained in the boundary of
$B(\gamma)$ and alternate sides consisting of an index one critical
point and the descending submanifolds of these.

Consider a closed interval $J$ in $P\cap \del B$. We define the
cone $C(J)$ to be the closure of the set of gradient lines that end in
$J$. If $J$ is in the interior of $P\cap \del B$ and has
boundary points $a$ and $b$, then $C(J)$ is the region enclosed by $J$
together with the gradient lines joining $O(P)$ to $a$ and $b$ (see figure~\ref{poly}).

Suppose next that one endpoint of $J$ is a vertex $v$ of $P$ and the
other is an interior point $a$. The vertex $v$ is the limit of the
gradient line joining an index-one critical point $x$ to $V$. The cone
$C(J)$ is then the region enclosed by $J$, the gradient line from
$O(P)$ to $a$, and an arc consisting of the closure of the gradient
line from $O(P)$ to $x$ and the gradient line from $x$ to $v$.

In both these cases, we can identify the cone with a sector in the
circle, with gradient lines identified with radial lines. Using such
an identification, the cone is foliated by lines transversal to the
radial lines, namely those corresponding to lines of a fixed distance
from the vertex of the sector, which we call \emph{longitudinal arcs}
$\lambda_r$. We call the point identified with the centre of the
circle the \emph{centre} of the cone and the arc $J$ the
\emph{boundary arc}.

By bounding the length of $J$ from above, we can ensure that the
length of each arc $\lambda_r$ is less than $\delta/2$. We
subdivide the boundary $\del B$ into closed arcs $J_k$
such that each arc $J_k$ is contained in some basin $P_{i_k}$
with at most one endpoint a vertex, and with the lengths of the
arcs $J_k$ sufficiently small to ensure that the corresponding
longitudinal arcs in the cones have lengths at most $\delta/2$.
We get a partition of $B$ into corresponding subsets
$C_k=C(J_k)$.

We can further partition $C_k$ into regions between pairs of
longitudinal arcs. For each $C_k$ we choose a collection of
longitudinal arcs such that each of the regions between pairs of
longitudinal arcs has diameter at most $\delta$. We shall call these
regions \emph{squares} (even though the region containing the centre
is really a triangle).

This gives a partition of $B$ into discs $\delta$. After
re-ordering, these discs will be the regions $b_i$. Observe that the
regions in $C_k$ are naturally ordered starting with the region
containing the centre and ending with the region containing the
opposite arc. We shall use this as well as the opposite order. We
shall often specify whether the first square is the one containing the
centre or the boundary arc and consider the corresponding natural
order.

We begin by ordering the arcs $J_k$. Pick an arc $J_1$ with both
endpoints in the interior of an edge of a basin. Order the arcs
cyclically beginning with the edge $J_1$. Let $P_1$ be the basin
containing $J_1$ and let $y_0$ be a point in $J_1$.

We shall now order the squares $b_i$ (see figure~\ref{order} showing
$B_j$ at various stages). Consider the cone on the arc $J_1$ and let
$b_1$, $b_2$, \dots $b_{l_1}$ be the regions of $C_1$ in the natural
order so that $b_1$ contains $J_1$. By construction, for $i\leq l$,
$B_i\cap \del B=J_1$ and each point in $B_i$ is connected to
$J_1$ by a radial line of length less than $\epsilon$.


Next, let $k$ be such that $J_2$, \dots $J_k$ are contained in
$P$ and $J_{k+1}$ is not (this includes the case when there are
only $k$ arcs $J_i$). We let $b_{l_1+1}$ be the square in $J_2$
containing the centre of $C_2$ and let $b_{l_2+2}$, \dots,
$b_{l_2}$ be the other squares in $C_2$ in the natural order.
Observe that for $l_1<i\leq l_2$, each point in $B_i$ can be
connected to the centre by a radial line of length at most
$\epsilon$. The centre can in turn be connected to $J_1\subset \del
B\cap B_i$ by a radial line of length at most $\epsilon$.
It is easy to see that the other claims also hold for the sets
$b_i$ and $B_i$ constructed so far.

We now continue this process inductively, choosing $b_{l_2+1}$ to
be the square of $C_3$ containing the centre and then choosing
successive regions by the natural order. The same argument
verifies the claims for these cases. In this manner, we can order
all regions in the cones $J_1$, \dots $J_k$ to get $b_1$,\dots
$b_l$.

Next consider (if we have not exhausted $B$) the cone
$C_{k+1}$. We take the next square $b_{l+1}$ to be the square in
$C_{k+1}$ that contains $J_{k+1}$. As before, the regions
$b_{l+2}$, \dots, will be the successive regions in $C_{k+1}$ in
the natural order upto the region containing the centre. As for
the first basin, for the successive interval $J_{k+2}$,\dots
$J_{k'}$ in the same basin as $J_{k+1}$, we take regions in
successive intervals ordered starting with the region containing
the centre.

The above constructions repeated inductively give an ordering of the
regions $b_i$ satisfying all the claims.

\end{proof}

\begin{lemma}
Given a set $B_j$ as above and two points $p,q\in B_j$, there is a
path in $B_j$ of length at most $5\epsilon$ joining $p$ to $q$.
\end{lemma}
\begin{proof}
The points $p$ and $q$ can be joined to points $p'$ and $q'$,
respectively, in $B_j\cap \del B$ by paths of length at most
$2\epsilon$. As $B_j\cap \del B$ is connected and the length
of $\del B$ is at most $\epsilon$, $p'$ and $q'$ can be joined
by a path of length at most $\epsilon$.
\end{proof}

We now complete the proof of Lemma~\ref{explift}. We construct inductively lifts $\tilde \iota_j$ on the sets $B_j$. First
note that as $y_0$ and $x$ are in the set $B$ whose diameter
is at most $3\epsilon$, there is a point $z_0\in B(0,3\epsilon)\subset
T_x F$ with $exp_x(z_0)=\iota(y_0)$. We define the map $\tilde \iota$ on $B_0$ by $\tilde
\iota(y_0)=z_0$.

Next, we inductively construct a map $\tilde \iota_{j+1}$ on $B_{j+1}$
extending the given map on $B_j$. Doing this is equivalent to
extending the lift on $B_j$ to the set $b_{j+1}$. First observe that,
for a point $\xi\in B_j\cap b_{j+1}$, there is a path $\beta$ joining
$\xi$ to $y_0$ of length less than $5\epsilon$. By considering the path
$\tilde \iota_j\circ \beta$, it follows that $z=\tilde \iota_j(\xi)$ is
contained in the ball of radius $8\epsilon$ in $T_x F_j$. Hence by Lemma~\ref{smalldisc}
we can construct a map $\exp_z^{-1}:\iota(b_{j+1})\to B(0,10\epsilon)$
which is a local inverse for the exponential map and so that
$\exp_z^{-1}(\iota(\xi))=z$. We define $\tilde \iota_{j+1}$ on $b_{j+1}$ as
$exp_z^{-1}\circ \iota$. Note that this agrees with the previous
definition on $\xi$. By the inverse function theorem, for each $y_1\in
B_j\cap b_{j+1}$, the inverse image under the exponential map in
$B(0,10\epsilon)$ of $y_1$ is a discrete set. As $B_j\cap b_{j+1}$ is
connected and both $exp_z^{-1}\circ \iota$ and $\tilde \iota_j$ give lifts of
$\iota$ on $B_j\cap b_{j+1}$ of the exponential map that agree at the point $\xi$, it follows that
$exp_z^{-1}\circ \iota$ agrees with $\tilde \iota_j$ on $B_j\cap b_{j+1}$. It
follows that we have a well-defined extension $\tilde \iota_{j+1}$ of
$\tilde \iota_j$. Proceeding inductively we obtain a lift $\tilde \iota$ as
claimed.

We can ensure that $y$ lifts to the origin by picking a path $\alpha$,
from $y$ to $y_0$ of length at most $3\epsilon$. A simpler variation
of the above argument gives a lift of this path to a path
$\tilde\alpha$ beginning at the origin and ending at some point $z_0$.
We proceed as before with $\tilde \iota(y_0)=z_0$.

\end{proof}

\bibliographystyle{amsplain}

\end{document}